\documentclass[a4paper]{amsart}

\usepackage[all]{xy}
\usepackage{pgf,tikz}
\usepackage{tikz-cd}
\usepackage{pgfplots}

\newcommand*{\Hc}{\mathcal{H}}
\newcommand*{\K}{\mathcal{K}}

\newcommand*{\T}{\mathcal{T}}
\newcommand*{\Bf}{{\bf f}}
\newcommand*{\Bg}{{\bf g}}
\newcommand*{\BC}{{\bf C}}
\newcommand*{\BD}{{\bf D}}

\newcommand*{\ket}{\rangle}
\newcommand*{\bra}{\langle}
\newcommand{\GG}{\mathbb{G}}
\newcommand{\HH}{\mathbb{H}}

\DeclareMathOperator{\Aut}{Aut}

\DeclareMathOperator{\Hom}{Hom}
\DeclareMathOperator{\Mor}{Mor}
\DeclareMathOperator{\id}{id}
\DeclareMathOperator{\tr}{tr}
\DeclareMathOperator{\ord}{ord}
\DeclareMathOperator{\Hilb}{Hilb}
\DeclareMathOperator{\HILB}{HILB}
\DeclareMathOperator{\Perm}{Perm}
\DeclareMathOperator{\PERM}{PERM}
\DeclareMathOperator{\Homeo}{Homeo}
\DeclareMathOperator{\HOMEO}{HOMEO}
\DeclareMathOperator{\Deck}{Deck}
\DeclareMathOperator{\DECK}{DECK}

\DeclareMathOperator{\Sym}{Sym}
\DeclareMathOperator{\Qut}{Qut}

\DeclareMathOperator{\Rep}{Rep}

\newenvironment{bnum}
{\begin{list}{}
    {\setlength{\labelwidth}{15pt}
     \setlength{\leftmargin}{\labelwidth}
    }
}
{\end{list}}

\newcommand{\tp}{\!\!
{\scriptstyle
\text{
\raisebox{0.7pt}{
\textcircled{\raisebox{-1.3pt}{$\mathsf{T}$}}
} 
} 
} 
\!\!}

\numberwithin{equation}{section}
\theoremstyle{change}
\newtheorem{theorem}{Theorem}[section]
\newtheorem{prop}[theorem]{Proposition}
\newtheorem{lemma}[theorem]{Lemma}

\newtheorem{definition}[theorem]{Definition}

\newtheorem{exercise}[theorem]{Exercise}
\newtheorem{example}[theorem]{Example}

\begin{document}

\title{An introduction to quantum symmetries}

\author{Christian Voigt}
\address{School of Mathematics and Statistics \\
         University of Glasgow \\
         University Place \\
         Glasgow G12 8QQ \\
         United Kingdom} 
\email{christian.voigt@glasgow.ac.uk}

\subjclass[2020]{46L67, 
05Cxx. 
}

\keywords{Quantum permutations, Tensor categories, Quantum automorphism groups}

\maketitle

\begin{abstract}
These notes are an introduction to the theory of quantum symmetries of finite and infinite sets, graphs, and locally compact spaces. 
\end{abstract}

\section{Preface} 

The study of \emph{quantum symmetries} is a line of research originating from work of Wang 
\cite{WANG_quantumsymmetry}, who defined quantum symmetry groups of finite sets, and more generally of finite dimensional $ C^* $-algebras, using Woronowicz's theory of compact quantum groups \cite{WORONOWICZ_compactquantumgroups}. The resulting quantum groups have been studied intensively throughout the past three decades, and this has uncovered intriguing links with free probability, subfactors, tensor categories, combinatorics, and quantum information theory.  

The aim of these notes is to give an introduction to this topic, without assuming any prior familiarity with quantum symmetries or quantum groups. 
However, some working knowledge of Hilbert space operators and $ C^* $-algebras will be assumed, as for instance covered in \cite{MURPHY_book}. 

Our starting point is the concept of a quantum permutation of a finite set, and we explain how such quantum permutations 
naturally form a $ C^* $-tensor category. This is complementary to the original approach by Wang, in the sense that it shifts focus from the $ C^* $-algebras introduced in \cite{WANG_quantumsymmetry} to their categories of representations. From $ C^* $-tensor categories it is only 
a small step towards quantum groups, and we discuss both discrete quantum groups and the closely related compact quantum groups already mentioned above. However, it is important to point out that one genuinely obtains \emph{discrete} quantum groups in our approach, in contrast to the \emph{compact} ones in Wang's theory. 
We will mostly work directly with quantum permutations anyway, and our main aim is to show that this perspective generalises naturally to infinite sets and graphs, and even to the realm of locally compact spaces. In all these cases, the  constructions will be illustrated with examples as we go along. 

What these notes do \emph{not} touch upon is the combinatorial and analytical study of Wang's quantum permutation groups and their quantum subgroups, in particular those arising as quantum automorphism groups of finite graphs and related structures. Fortunately, there is already an extensive literature on this topic, including the excellent textbooks \cite{BANICA_quantumpermutationgroups}, \cite{FRESLON_compactquantumgroupscombinatorics}. We refer the reader to these resources and the references therein for further study. 

These are notes for the CIMPA School ``$ K $-theory and Operator Algebras'' which took place 28 July - 1 August 2025 in La Plata (Argentina). Special thanks go to G. Corti\~nas and G. Tartaglia for their excellent organisation, and I would also like to use this opportunity to thank the participants of the school for interesting discussions during and after lectures.

\section{Introduction} \label{sec_introduction}

\subsection{What is quantum symmetry?} 

In order to motivate the main idea behind quantum symmetries we start with the most fundamental example, namely quantum permutations of a finite set. 

Recall that a \emph{permutation} of $ n $ points is a bijective map $ \sigma: [n] \rightarrow [n] $, where we write $ [n] = \{1,\dots, n\} $. We can encode such a permutation in terms of the $ n \times n $-matrix $ u^\sigma $ given by 
$$
u^{\sigma}_{x,y} = 
\begin{cases} 
1 & x = \sigma(y) \\
0 & \text{else.}
\end{cases} 
$$
A matrix of this form is called a \emph{permutation matrix}. It follows directly from the definition that a matrix $ u \in M_n(\mathbb{C}) $ is a permutation 
matrix iff all its entries are either $ 0 $ or $ 1 $, and each row and each column of $ u $ contains precisely one entry $ 1 $. 
This can equivalently be phrased as follows. 

\begin{lemma} \label{permmatrices}
A matrix $ u = (u_{x,y}) \in M_n(\mathbb{C}) $ is a permutation matrix if and only if the following conditions are satisfied.  
\begin{bnum} 
\item[$\bullet$] $ u_{x,y}^2 = u_{x,y} $ for all $ 1 \leq x,y \leq n $.
\item[$\bullet$] We have 
$$ 
\sum_{t = 1}^n u_{x,t} = 1 = \sum_{t = 1}^n u_{t,y} 
$$
for all $ 1 \leq x,y \leq n $. 
\end{bnum}
\end{lemma}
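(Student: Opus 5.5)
The plan is to reduce both directions to the entrywise characterisation stated just before the lemma. Recall that $u$ is a permutation matrix if and only if every entry lies in $\{0,1\}$ and each row and each column contains exactly one entry equal to $1$.

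\emph{Forward direction.} Suppose $u = u^\sigma$ for a permutation $\sigma$. Every entry is $0$ or $1$, and both numbers satisfy $t^2 = t$, so the first condition holds.
\begin{itemize}
\item Fix a row $x$. Then $u_{x,t} = 1$ exactly when $x = \sigma(t)$, that is, when $t = \sigma^{-1}(x)$. Since this $t$ is unique, the row sum is $1$.
\item Fix a column $y$. Then $u_{t,y} = 1$ exactly when $t = \sigma(y)$, so again there is a single nonzero entry and the column sum is $1$.
\end{itemize}
This direction is a direct unwinding of the definition.

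\emph{Converse.} Assume both conditions hold.
\begin{itemize}
\item Since $u_{x,y}$ is a complex number with $u_{x,y}^2 = u_{x,y}$, we have $u_{x,y}(u_{x,y}-1) = 0$. Hence every entry is $0$ or $1$.
\item A sum of entries from $\{0,1\}$ equals the number of entries equal to $1$. So a row sum of $1$ means that row contains exactly one entry $1$, and likewise for each column.
\end{itemize}

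It then remains to produce $\sigma$ explicitly. For each $y$, let $\sigma(y)$ be the unique $x$ with $u_{x,y} = 1$; this is well defined by the column condition. The row condition shows that for each $x$ there is exactly one $y$ with $\sigma(y) = x$. Hence $\sigma$ is a bijection, and by construction $u = u^\sigma$.

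The only step needing care is the inference from ``idempotent complex scalars'' to entries in $\{0,1\}$, and then to ``exactly one $1$''. This works precisely because the entries are scalars in a field without zero divisors. I will remark that this is the point where the noncommutative generalisation (entries as projections) departs from the classical picture. Otherwise the proof is routine bookkeeping.
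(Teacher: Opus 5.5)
Your proof is correct and follows the same route as the paper, which gives no separate proof and simply presents the lemma as a rephrasing of the observation that permutation matrices are exactly the $0$--$1$ matrices with a single $1$ in each row and column; you carry out that reduction in full. One small caveat on your closing remark: the step from ``row sum equals $1$'' to ``exactly one entry equals $1$'' also uses that $\mathbb{C}$ has characteristic zero (over $\mathbb{F}_2$ the all-ones $3 \times 3$ matrix satisfies both conditions), so ``no zero divisors'' alone is not the whole reason it works.
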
 

The key idea, due to Wang \cite{WANG_quantumsymmetry}, is to study more general ``permutation matrices'', by considering the relations in Lemma \ref{permmatrices} not 
over $ \mathbb{C} $, but for matrices with entries in (noncommutative) $ C^* $-algebras. In the sequel, the following definition will play a central role. 

\begin{definition} \label{defqperm} 
A \emph{quantum permutation} of $ [n] = \{1, \dots, n\} $ is a pair $ (\Hc, u) $ consisting of a Hilbert space $ \Hc $ and an $ n \times n $-matrix $ u = (u_{x,y}) $ of 
elements $ u_{x,y} \in B(\Hc) $ such that 
\begin{bnum} 
\item[$\bullet$] $ u_{x,y}^2 = u_{x,y} = u_{x,y}^* $ for all $ 1 \leq x,y \leq n $.
\item[$\bullet$] We have 
$$ 
\sum_{t = 1}^n u_{x,t} = 1 = \sum_{t = 1}^n u_{t,y} 
$$
for all $ 1 \leq x,y\leq n $. 
\end{bnum}
Matrices $ u = (u_{x,y}) \in M_n(B(\Hc)) $ satisfying these conditions are also called \emph{magic unitaries}. 
\end{definition} 

Here we write $ T^* $ for the adjoint of $ T \in B(\Hc) $, and we note that the first condition in 
Definition \ref{defqperm} is saying that all entries of a magic unitary are projections. Using Exercise \ref{exI1} below, it is not hard to check that a magic unitary is indeed a unitary element of $ M_n(B(\Hc)) $. 

Note also that the requirement $ u_{x,y} = u_{x,y}^* $ is not part of Lemma \ref{permmatrices}, but it could have been added there since the elements $ 0,1 \in \mathbb{C} $ are clearly self-adjoint. We thus arrive at the following basic fact. 

\begin{lemma} \label{qpermdimone}
Quantum permutations of $ [n] $ whose underlying Hilbert space is $ \mathbb{C} $ are the same thing as permutations of $ [n] $. 
\end{lemma}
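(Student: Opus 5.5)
The plan is to reduce the statement directly to Lemma \ref{permmatrices}. When $\Hc = \mathbb{C}$ we identify $B(\Hc) = B(\mathbb{C})$ with $\mathbb{C}$ itself: an operator on $\mathbb{C}$ is multiplication by a complex number, and under this identification composition becomes multiplication and the adjoint becomes complex conjugation. A quantum permutation $(\mathbb{C}, u)$ of $[n]$ is therefore the same thing as a matrix $u \in M_n(\mathbb{C})$ satisfying $u_{x,y}^2 = u_{x,y} = \overline{u_{x,y}}$ for all $x,y$, together with the row and column sum conditions of Definition \ref{defqperm}.

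Next I compare these conditions with those of Lemma \ref{permmatrices}. The only difference is the extra self-adjointness requirement $u_{x,y} = \overline{u_{x,y}}$. I will show it is automatic. If $z \in \mathbb{C}$ satisfies $z^2 = z$, then $z(z-1) = 0$, so $z \in \{0,1\}$. In particular $z$ is real, hence $z = \overline{z}$. Consequently a matrix $u \in M_n(\mathbb{C})$ is a magic unitary exactly when it satisfies the two conditions of Lemma \ref{permmatrices}. By that lemma, this happens exactly when $u$ is a permutation matrix.

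Finally, I will check that permutation matrices correspond bijectively to permutations via $\sigma \mapsto u^\sigma$. This map is injective because $\sigma(y)$ is recovered as the unique $x$ with $u^\sigma_{x,y} = 1$. It is surjective by the definition of permutation matrices: a $0$–$1$ matrix with exactly one $1$ in each row and each column determines a function $\sigma$, with $\sigma(y)$ the row of the $1$ in column $y$. This $\sigma$ is injective because each row contains only one $1$, hence it is bijective since $[n]$ is finite.

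There is no real obstacle here. The only point needing care is making precise what ``the same thing'' means: the canonical identification $B(\mathbb{C}) \cong \mathbb{C}$ as $*$-algebras, composed with the bijection $\sigma \mapsto u^\sigma$.
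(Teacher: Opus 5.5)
Your proposal is correct and follows the paper's own argument: the paper's justification is the remark just before the lemma that self-adjointness is automatic because the only possible entries $0,1 \in \mathbb{C}$ are self-adjoint, so Lemma \ref{permmatrices} applies after identifying $B(\mathbb{C})$ with $\mathbb{C}$. Your additional verification that $\sigma \mapsto u^\sigma$ is a bijection onto permutation matrices is a detail the paper treats as evident from the definition, and your argument for it is fine.
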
 

By the \emph{dimension} of a quantum permutation $ (\Hc,u) $ we shall mean the dimension of the underlying Hilbert space $ \Hc $. In view of Lemma \ref{qpermdimone}, 
we will be mainly interested in quantum permutations on Hilbert spaces of dimension strictly greater than one. 

Let us introduce some more terminology. We shall say that a quantum permutation $ (\Hc, u) $ is \emph{classical} if all matrix entries $ u_{x,y} $ mutually commute. 
Clearly, every one-dimensional quantum permutation is classical, which means that quantum permutations arising from classical permutations are indeed classical. 
Not every classical quantum permutation is of this form, but we will explain later that there is indeed a close link between classical quantum permutations 
and (classical) permutations. 

Let us now describe the structure of quantum permutations for small values of $ n $. 

\begin{lemma} \label{quantumpermutationssmalln}
Every quantum permutation of $ [n] $ for $ n = 1,2,3 $ is classical. 
\end{lemma}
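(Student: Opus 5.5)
The proof uses one basic fact repeatedly: if $p_1, \dots, p_k$ are projections with $\sum p_i = 1$, then $p_i p_j = 0$ for $i \neq j$. This is the content of Exercise \ref{exI1}. To see it, fix $i$ and compress: $p_i = p_i \cdot 1 \cdot p_i = p_i + \sum_{j \neq i} p_i p_j p_i$. Hence $\sum_{j\neq i} (p_j p_i)^*(p_j p_i) = 0$. A sum of positive operators vanishes only if each term does, so $p_j p_i = 0$. Consequently, the entries in any single row of a magic unitary are pairwise orthogonal, and so are the entries in any single column. In particular, two entries sharing a row or a column commute.

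For $n = 1$ the matrix is $u = (1)$, so there is nothing to show. For $n = 2$ the row and column relations give $u_{1,2} = 1 - u_{1,1}$, $u_{2,1} = 1 - u_{1,1}$ and $u_{2,2} = 1 - u_{2,1} = u_{1,1}$. Every entry is therefore a polynomial in the single projection $p = u_{1,1}$, and all entries commute.

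The main work is $n = 3$. Entries in the same row or column commute by the fact above, so it remains to show $u_{x,y} u_{z,w} = u_{z,w} u_{x,y}$ for $x \neq z$ and $y \neq w$. After permuting rows and columns, which preserves the magic unitary conditions, it suffices to treat $u_{1,1}$ and $u_{2,2}$. I would compute as follows:
\[
u_{1,1} u_{2,2} = u_{1,1}(1 - u_{2,1} - u_{2,3}) = u_{1,1} - u_{1,1} u_{2,3},
\]
using that $u_{1,1} u_{2,1} = 0$, since these entries share column $1$. Next, expand $u_{2,3}$ along column $3$ as $u_{2,3} = 1 - u_{1,3} - u_{3,3}$. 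Since $u_{1,1} u_{1,3} = 0$ (same row), this gives $u_{1,1} u_{2,3} = u_{1,1} - u_{1,1} u_{3,3}$, and so $u_{1,1} u_{2,2} = u_{1,1} u_{3,3}$. By the symmetric argument, $u_{1,1} u_{3,3} = u_{1,1} u_{2,2}$ as well. This alone is not yet commutativity, so I would also use the row $3$ relation, $u_{3,3} = 1 - u_{3,1} - u_{3,2}$, which gives $u_{1,1} u_{3,3} = u_{1,1} - u_{1,1} u_{3,2}$.

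Instead, I expect the cleanest route to be the following. Multiply the row relation $u_{2,1} + u_{2,2} + u_{2,3} = 1$ on both sides by $u_{1,1}$ to get
\[
u_{1,1} = u_{1,1} u_{2,2} u_{1,1} + u_{1,1} u_{2,3} u_{1,1},
\]
the $u_{2,1}$ term vanishing as before. One then shows $u_{1,1} u_{2,2} u_{1,1} = u_{1,1} u_{2,2}$. From the previous computation, $u_{1,1} u_{2,2} = u_{1,1} u_{3,3}$, and by the same computation with the roles of the indices swapped, $u_{2,2} u_{1,1} = u_{3,3} u_{1,1}$. Hence, with $q = u_{1,1} u_{2,2}$,
\[
q = u_{1,1} u_{3,3} = u_{1,1}(1 - u_{1,3} - u_{2,3}) u_{3,3}.
\]
Iterating substitutions of this kind should yield $q = q^*$. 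Since $q^* = u_{2,2} u_{1,1}$, this is exactly the commutativity we want. The algebraic bookkeeping in this step, showing $u_{1,1} u_{2,2}$ is self-adjoint, is the main obstacle.

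As a fallback I would use a more conceptual argument. Let $p = u_{1,1} u_{2,2} u_{1,1}$, a positive operator. Then
\[
u_{1,1} - p = u_{1,1} u_{2,3} u_{1,1} = u_{1,1}(1 - u_{1,3} - u_{3,3}) u_{1,1} = u_{1,1} - u_{1,1} u_{3,3} u_{1,1}.
\]
It should also follow that $u_{1,1} u_{2,2} u_{1,1}$ is idempotent. Once that is known, $(u_{2,2} u_{1,1} - u_{1,1} u_{2,2} u_{1,1})^*(\cdots) = 0$, which forces $u_{2,2} u_{1,1} = u_{1,1} u_{2,2} u_{1,1}$. Taking adjoints then gives commutativity.
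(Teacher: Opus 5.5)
Your treatment of $n=1,2$, the orthogonality fact, and the reduction to the pair $u_{1,1}, u_{2,2}$ are all fine. However, the $n=3$ case is never actually finished. The decisive claim is that $u_{1,1}u_{2,2}$ is self-adjoint, or equivalently $u_{1,1}u_{2,2}=u_{1,1}u_{2,2}u_{1,1}$. You only assert it: ``iterating substitutions of this kind should yield $q=q^*$'' is not an argument. Moreover, the substitution you display, $u_{1,1}(1-u_{1,3}-u_{2,3})u_{3,3}$, is just $u_{1,1}u_{3,3}$ again, because $u_{1,3}u_{3,3}=0=u_{2,3}u_{3,3}$, so it makes no progress.

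The fallback is circular. Put $p=u_{1,1}u_{2,2}u_{1,1}$ and $x=(1-u_{1,1})u_{2,2}u_{1,1}$; then $x^*x=p-p^2$. So idempotence of $p$ is equivalent to $x=0$, which is exactly the commutativity you are trying to prove, and you give no reason why it holds.

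The missing step is short, and you already derived the needed identity. Multiply $u_{1,1}u_{2,2}$ on the right by the row relation $u_{1,1}+u_{1,2}+u_{1,3}=1$. The term $u_{1,1}u_{2,2}u_{1,2}$ vanishes because $u_{2,2}u_{1,2}=0$ (same column). Using your identity $u_{1,1}u_{2,2}=u_{1,1}u_{3,3}$, you also get $u_{1,1}u_{2,2}u_{1,3}=u_{1,1}u_{3,3}u_{1,3}=0$ (again same column). Hence $u_{1,1}u_{2,2}=u_{1,1}u_{2,2}u_{1,1}$, which is self-adjoint, so $u_{1,1}u_{2,2}=(u_{1,1}u_{2,2})^*=u_{2,2}u_{1,1}$.

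This is precisely the paper's argument. The only difference is that the paper obtains $u_{1,1}u_{2,2}u_{1,3}=0$ directly, by expanding $u_{2,2}=1-u_{2,1}-u_{2,3}$ between $u_{1,1}$ and $u_{1,3}$.
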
 

\begin{proof} 
This is obvious for $ n = 1 $. For $ n = 2 $ notice that every quantum permutation $ (\Hc, u) $ must be of the form 
$$
u =
\begin{pmatrix} 
p & 1 - p \\
1 - p & p 
\end{pmatrix} 
$$
for some projection $ p \in B(\Hc) $. 

For $ n = 3 $ it is enough to show that the matrix entries $ u_{i,j} $ and $ u_{k,l} $ commute provided $ i \neq k $ and $ j \neq l $. 
Consider for instance $ u_{11} $ and $ u_{22} $. We get 
$$
u_{1,1} u_{2,2} u_{1,3} = u_{1,1} (1 - u_{2,1} - u_{2,3}) u_{1,3} = 0,  
$$
which implies $ u_{1,1} u_{2,2} = u_{1,1} u_{2,2} (u_{1,1} + u_{1,2} + u_{1,3}) = u_{1,1} u_{2,2} u_{1,1} $. 
This yields 
$$
u_{1,1} u_{2,2} = u_{1,1} u_{2,2} u_{1,1} = (u_{1,1} u_{2,2} u_{1,1})^* = (u_{1,1} u_{2,2})^* = u_{2,2} u_{1,1} 
$$
as required. The remaining cases can be verified in a similar way, see also Exercise \ref{exI3}. 
\end{proof} 

Lemma \ref{quantumpermutationssmalln} is basically saying that quantum permutations of sets of cardinality at most $ 3 $ are not very interesting. In contrast, as soon as $ n \geq 4 $ one can find non-classical quantum permutations. For instance, given arbitrary projections $ p,q \in B(\Hc) $ the matrix 
$$
\begin{pmatrix} 
p & 1 - p & 0 & 0 \\
1 - p & p & 0 & 0 \\
0 & 0 & q & 1 - q \\
0 & 0 & 1 - q & q 
\end{pmatrix} 
$$
defines a quantum permutation of four points. 

In order to understand the case $ n = 4 $ in more detail, let us discuss a method which produces further examples of non-classical quantum permutations, also for higher values of $ n $, compare \cite[Chapter 16a]{BANICA_quantumpermutationgroups}.  
Fix $ N \in \mathbb{N} $ and consider an abelian group $ A $ of order $ N $, for example $ A = \mathbb{Z}/N \mathbb{Z} $. Identify $ A $ with the Pontrjagin dual group $ \widehat{A} $ and let $ [ \;, \; ]: A \times A \rightarrow U(1)  $ be the map induced by the canonical pairing between $ A $ and $ \widehat{A} $. For instance, in the case $ A = \mathbb{Z}/N \mathbb{Z} $ we get 
$$
[k, l] = e^{2 \pi i kl/N} 
$$
for $ k,l \in \mathbb{Z}/N \mathbb{Z} $. Consider the Hilbert space $ \Hc = \mathbb{C}^N $ and label its standard basis vectors $ e_a $ by the group elements $ a \in A $. 
We define the \emph{Weyl matrices} $ W_{a,b} $ by 
$$
W_{a,b} e_c = [a,c] e_{b + c} 
$$
for $ a,b,c \in A $. Moreover let $ \tr: M_N(\mathbb{C}) \rightarrow \mathbb{C} $ be the normalised trace, so that $ \tr(1) = 1 $. 

\begin{lemma} \label{weyllemma}
The Weyl matrices are unitary and satisfy the following relations. 
\begin{bnum} 
\item[a)] $ W_{a,b}^* = [a,b] W_{-a, -b} $ 
\item[b)] $ W_{a,b} W_{c,d} = [a,d] W_{a + c, b + d} $
\item[c)] $ W_{a,b}^* W_{c,d} = [a, b - d] W_{c - a, d - b} $
\item[d)] $ \tr(W_{a,b}^* W_{c,d}) = \delta_{a,c} \delta_{b,d} $
\end{bnum} 
\end{lemma}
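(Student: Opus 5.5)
Everything follows from evaluating both sides on the standard basis vectors $e_c$, $c \in A$. We use only that $[\;,\;]$ is a bicharacter with values in $U(1)$, so $[a,c+d]=[a,c][a,d]$, $[a+b,c]=[a,c][b,c]$, $[-a,c]=\overline{[a,c]}$, and (by our identification of $A$ with $\widehat A$) $[a,b]=[b,a]$. Unitarity is immediate: $W_{a,b}$ sends the orthonormal basis $(e_c)$ to the orthonormal basis $([a,c]e_{b+c})$, since $c\mapsto b+c$ is a bijection and the scalars have modulus one.

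I would prove b) first, since it is the basic computation:
$$
W_{a,b}W_{c,d}e_x=[c,x]\,W_{a,b}e_{d+x}=[c,x][a,d+x]\,e_{b+d+x}=[a,d]\,[a+c,x]\,e_{b+d+x}=[a,d]\,W_{a+c,b+d}e_x .
$$

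For a), note first that $W_{0,0}=1$. By b),
$$
W_{a,b}W_{-a,-b}=[a,-b]\,W_{0,0}=\overline{[a,b]}\cdot 1 .
$$
Since $W_{a,b}$ is unitary, $W_{a,b}^*=W_{a,b}^{-1}=[a,b]\,W_{-a,-b}$. Alternatively one can check $\bra W_{a,b}e_c,e_x\ket$ directly.

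Part c) comes from combining a) and b):
$$
W_{a,b}^*W_{c,d}=[a,b]\,W_{-a,-b}W_{c,d}=[a,b]\,[-a,d]\,W_{c-a,d-b}=[a,b-d]\,W_{c-a,d-b}.
$$

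For d), by c) it suffices to show $\tr(W_{e,f})=\delta_{e,0}\delta_{f,0}$. The diagonal entries of $W_{e,f}$ are $\bra W_{e,f}e_c,e_c\ket=[e,c]\,\delta_{f,0}$. So if $f\neq 0$ the trace is $0$. If $f=0$ the trace is $\frac1N\sum_{c}[e,c]$, which is the standard character orthogonality: it equals $1$ if $e=0$ and $0$ otherwise, because $c\mapsto[e,c]$ is a nontrivial character when $e\neq0$ (nondegeneracy of the pairing). When $(e,f)=(0,0)$ the prefactor $[a,b-d]=[a,0]=1$, so we obtain $\delta_{a,c}\delta_{b,d}$.

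There is no real obstacle here. The only points needing care are the correct sign and argument in the scalar factors, which requires using the bicharacter and symmetry properties consistently, and invoking nondegeneracy of the pairing in d).
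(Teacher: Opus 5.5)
Your proof is correct and follows essentially the paper's argument. Everything is checked on the basis vectors $e_c$, c) comes from combining a) and b), and d) follows from c) plus character orthogonality; the only reordering is that you prove unitarity directly and derive a) from b), whereas the paper verifies a) by an inner-product computation and reads off unitarity from c) with $a=c$, $b=d$. One small remark: the symmetry $[a,b]=[b,a]$ you list need not hold for an arbitrary identification $A\cong\widehat{A}$, but you never actually use it, so nothing is affected.
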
 

\begin{proof} 
$ a) $ We compute 
\begin{align*}
\bra W_{a,b}^* e_c, e_d \ket = \bra e_c, [a,d] e_{b + d} \ket &= [a,d] \delta_{c, b + d} \\
&= [a, c - b] \delta_{c - b, d} \\ 
&= [a, -b] [a, c] \bra e_{c - b}, e_d \ket = \bra [a,b] W_{-a,-b} e_c, e_d \ket 
\end{align*}
for all $ c,d \in A $, and this yields the claim. \\ 
$ b) $ We calculate 
$$
W_{a,b} W_{c,d} e_f = [c, f] W_{a,b} e_{d + f} = [c, f] [a, d + f] e_{b + d + f} = [a,d] W_{a + c, b + d} e_f 
$$
as required. \\
$ c) $ follows by combinining $ a) $ and $ b) $. Taking $ a = c, b = d $ shows in particular that $ W_{a,b} $ is unitary. \\
$ d) $ Using $ c) $ we compute 
\begin{align*}
\tr(W_{a,b}^* W_{c,d}) &= \frac{1}{N} \sum_{f \in A} \bra e_f, W_{a,b}^* W_{c,d} e_f \ket \\
&= \frac{1}{N} \sum_{f \in A} [a, b - d] \bra e_f, [c - a, f] e_{f + d - b} \ket \\
&= \delta_{b,d} \frac{1}{N} \sum_{f \in A} [a, 0] \bra e_f, [c - a, f] e_f \ket \\
&= \delta_{b,d} \frac{1}{N} \sum_{f \in A} [c - a, f] = \delta_{b,d} \delta_{a,c} 
\end{align*}
as claimed. 
\end{proof} 

We remark that the Weyl matrices form a \emph{unitary error basis}. That is, they form a basis of $ M_N(\mathbb{C}) $ consisting of unitary matrices which 
are mutually orthogonal with respect to the Hilbert-Schmidt inner product 
$$
\bra S,T \ket = \tr(S^* T).  
$$
In quantum information theory, unitary error bases are closely related to ``tight'' quantum teleportation and superdense coding schemes, see \cite{WERNER_teleportation}. 

\begin{prop} \label{weylquantumpermutation}
Let $ A $ be an abelian group of order $ N $. Set $ n = N^2 $ and identify $ M_N(\mathbb{C}) \cong \mathbb{C}^n $. For every unitary matrix $ g \in U(N) $ we obtain a quantum permutation $ \pi^g = (\mathbb{C}^n, u^g) $ of $ n $ points by setting 
$$
u^g_{(a,b),(c,d)} = |W_{a,b} g W_{c,d}^* \ket \bra W_{a,b} g W_{c,d}^*| 
$$
for $ (a,b), (c,d) \in A \times A \cong [n] $, that is, $ u^g_{(a,b),(c,d)} $ is the orthogonal projection onto the linear subspace spanned by $ W_{a,b} g W_{c,d}^* $. 
\end{prop}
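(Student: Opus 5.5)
We equip $\mathbb{C}^n \cong M_N(\mathbb{C})$ with the normalised Hilbert--Schmidt inner product $\bra S,T\ket = \tr(S^*T)$, so that $|S\ket\bra S|$ denotes the rank one operator $T \mapsto \tr(S^*T)\, S$. The plan is to check the two conditions of Definition \ref{defqperm} by showing that each row and each column of $u^g$ consists of the projections onto the vectors of an orthonormal basis of $\mathbb{C}^n$. Once this is known, the projections in a fixed row (or column) are mutually orthogonal rank one projections onto an orthonormal basis, so they sum to the identity. The projection property $u_{x,y}^2 = u_{x,y} = u_{x,y}^*$ is automatic, provided each vector $W_{a,b} g W_{c,d}^*$ has norm one. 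This holds because $\tr(V^*V) = \tr(1) = 1$ for any unitary $V$, and $W_{a,b} g W_{c,d}^*$ is unitary, being a product of unitaries by Lemma \ref{weyllemma}.

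\emph{Row sums.} Fix $(a,b)$. We need the family $\{W_{a,b} g W_{c,d}^* \mid (c,d) \in A \times A\}$ to be orthonormal; since it has $n$ elements, it is then a basis. We compute
$$
\tr\big((W_{a,b} g W_{c,d}^*)^* W_{a,b} g W_{c',d'}^*\big) = \tr(W_{c,d}\, g^* W_{a,b}^* W_{a,b}\, g\, W_{c',d'}^*) = \tr(W_{c,d} W_{c',d'}^*),
$$
using unitarity of $W_{a,b}$ and $g$. By traciality this equals $\tr(W_{c',d'}^* W_{c,d}) = \delta_{c,c'}\delta_{d,d'}$, by Lemma \ref{weyllemma} d).

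\emph{Column sums.} Fix $(c,d)$. In the same way, $\tr(W_{c,d} g^* W_{a,b}^* W_{a',b'} g W_{c,d}^*) = \tr(W_{a,b}^* W_{a',b'})$ after cycling the trace and cancelling the unitaries $W_{c,d}$ and $g$. This equals $\delta_{a,a'}\delta_{b,b'}$, again by Lemma \ref{weyllemma} d).

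There is no serious obstacle. The only points needing care are fixing the inner product on $\mathbb{C}^n \cong M_N(\mathbb{C})$ as the normalised Hilbert--Schmidt one, so that the vectors in question are unit vectors and $|S\ket\bra S|$ is genuinely a projection, and using the trace property to reduce each inner product to Lemma \ref{weyllemma} d).
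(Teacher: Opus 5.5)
Your proof is correct and follows essentially the same route as the paper. Each vector is a unit vector because the product of unitaries has normalised trace one, and each row and column is an orthonormal basis of $M_N(\mathbb{C})$ by Lemma \ref{weyllemma} d), so its projections sum to $1$. You make the traciality step explicit and write out the column computation, where the paper only says ``similarly''.
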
 

\begin{proof} 
It follows from Lemma \ref{weyllemma} c), d) that $ W_{a,b} g W_{c,d}^* $ is a unit vector in $ M_N(\mathbb{C}) = \mathbb{C}^n $. Hence each $ u^g_{(a,b),(c,d)} $ is a projection. 
Moreover, for fixed $ a,b $ the matrices $ W_{a,b} g W_{c,d}^* $ for $ c, d \in A $ form an orthonormal basis of $ M_N(\mathbb{C}) $ since 
$$
\tr((W_{a,b} g W_{c_1,d_1}^*)^* W_{a,b} g W_{c_2,d_2}^*) 
= \tr(W_{c_1,d_1} W_{c_2,d_2}^*) = \delta_{c_1, c_2} \delta_{d_1, d_2} 
$$
by Lemma \ref{weyllemma} d). 
It follows that $ \sum_{c,d} u^g_{(a,b), (c,d)} = 1 $. Similarly, one checks the relation $ \sum_{a,b} u^g_{(a,b), (c,d)} = 1 $ for all $ c,d \in A $. 
\end{proof} 

Note that the magic unitaries constructed in Proposition \ref{weylquantumpermutation} only depend on the class of the unitary $ g $ in the projective unitary group $ PU(N) = U(N)/U(1) $. In other words, we can rescale $ g $ by an arbitrary phase without changing the magic unitary associated to it. 

In the special case $ A = \mathbb{Z}/2 \mathbb{Z} $, the following result from \cite{BANICA_COLLINS_pauli}, \cite{BANICA_BICHON_fourpoints} shows that the magic unitaries in Proposition \ref{weylquantumpermutation} yield a complete description of the irreducible quantum 
permutations of four points, see also \cite[Lemma 5.4]{VOIGT_infinitequantumpermutations}. Here a quantum permutation $ (\Hc, u) $ is called \emph{irreducible} if the only elements of $ B(\Hc) $ 
commuting with all entries $ u_{x,y} $ of $ u $ are multiplies of the identity. 

\begin{theorem}[Banica-Collins 2008, Banica-Bichon 2009] \label{BBfourpoints}
The irreducible quantum permutations of the set $ \{1,2,3,4\} \cong \mathbb{Z}/2\mathbb{Z} \times \mathbb{Z}/2 \mathbb{Z} $ have dimension $ 1,2 $ or $ 4 $, and can all be represented as direct summands of Weyl 
quantum permutations of the form $ \pi^g = (\mathbb{C}^4, u^g) $ for $ g \in PU(2) \cong SO(3) $, where 
$$
u^g_{(a,b),(c,d)} = |W_{a,b} g W_{c,d}^* \ket \bra W_{a,b} g W_{c,d}^*|.
$$
Moreover, two irreducible quantum permutations $ \pi^g, \pi^h $ are equivalent iff $ g = \alpha h \beta $ for some elements $ \alpha, \beta $ in the diagonal subgroup of $ SO(3) $. 
\end{theorem}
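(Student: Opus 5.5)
The plan is to realise all quantum permutations of four points inside a single concrete algebra of matrix-valued functions, namely the one generated by the Weyl construction of Proposition \ref{weylquantumpermutation}. Let $A_4$ be the universal $C^*$-algebra generated by the entries of a $4 \times 4$ magic unitary. Every quantum permutation $(\Hc,u)$ of four points is the same thing as a representation of $A_4$ on $\Hc$. Proposition \ref{weylquantumpermutation} with $A = \mathbb{Z}/2\mathbb{Z}$ gives a $*$-homomorphism
$$
\rho: A_4 \rightarrow C(PU(2), M_4(\mathbb{C})), \qquad \rho(u_{x,y})(g) = u^g_{x,y}.
$$
This is well defined because $u^g$ depends only on the class of $g$ in $PU(2) \cong SO(3)$. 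The main claim is that $\rho$ is injective.

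Granting injectivity, the first part of the theorem follows by standard facts about subhomogeneous algebras. Every irreducible representation of $A_4$ extends to an irreducible representation of $C(SO(3), M_4(\mathbb{C}))$, and such a representation is evaluation at a point $g$. Hence it is a direct summand of some $\pi^g$ and has dimension at most $4$. To exclude dimension $3$, I would decompose $\pi^g$ explicitly by computing the commutant of the $u^g_{x,y}$, which is a finite linear-algebra calculation in terms of the entries of $g \in SO(3)$. I expect the following picture:
\begin{bnum}
\item[$\bullet$] for generic $g$ the commutant is trivial and $\pi^g$ is irreducible of dimension $4$;
\item[$\bullet$] on a degenerate locus (for instance $g$ with a zero pattern) $\pi^g$ splits as $2+2$, $2+1+1$ or $1+1+1+1$.
\end{bnum}
A $3+1$ splitting should be ruled out by a rank argument. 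If $\pi^g = \sigma \oplus \chi$ with $\chi$ one-dimensional, then $\chi$ is a classical permutation by Lemma \ref{qpermdimone}. One then checks that the remaining three-dimensional block has pairwise commuting entries, similarly to the computation in Lemma \ref{quantumpermutationssmalln}, so that block is reducible.

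The main obstacle is the injectivity of $\rho$. My approach would be to compare the Haar states on both sides. $A_4$ carries a comultiplication $\Delta(u_{x,y}) = \sum_t u_{x,t} \otimes u_{t,y}$, and $\rho$ intertwines it with the coproduct coming from matrix multiplication of Weyl-type unitaries. Composing $\rho$ with $\tr \otimes \int_{SO(3)}$ should therefore give an invariant state, which must be the Haar state of $A_4$. I would verify this on moments of the character $\chi = \sum_x u_{x,x}$. On the $SO(3)$ side these moments are
$$
\int_{SO(3)} \tr\Bigl(\sum_{a,b} u^g_{(a,b),(a,b)}\Bigr)^k \, dg.
$$
Since $\tr \sum_{a,b} u^g_{(a,b),(a,b)}$ equals $|\tr g|^2$ up to normalisation, this becomes a classical $SO(3)$ integral, which I would match against the moment count $|NC(k)|$-type numbers for $A_4$ (Catalan numbers). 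Equality of these moments shows that $\rho$ is faithful on the dense Hopf $*$-algebra of $A_4$. Injectivity on the whole universal (full) algebra $A_4$ then needs coamenability, which also follows from the growth of these moments: the support of the moment measure reaches the norm $4$. This moment comparison is where I expect most of the work.

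For the equivalence statement, one direction is explicit. A diagonal element of $SO(3)$, that is a Pauli matrix up to phase, permutes the Weyl matrices up to phases by Lemma \ref{weyllemma} b). Hence left or right multiplication of $g$ by $\alpha$ or $\beta$ conjugates $u^g$ by a unitary and relabels indices by a group translation, which composes to an equivalence of the form $u^{\alpha g \beta} \cong u^g$. For the converse, suppose $\pi^g \cong \pi^h$ are irreducible. I would compute the invariants $\tr(u_{x,y} u_{z,w})$, which equal $\tfrac14 |\tr(W_{a,b} g W_{c,d}^* W_{e,f} \ldots)|^2$-type expressions. Their squares determine the entries $g_{ij}^2$ of the $SO(3)$ matrix, and additional triple products recover the signs of $g_{ij}$ up to left and right sign changes, that is, up to the diagonal subgroup.
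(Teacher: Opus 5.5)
The paper gives no proof of this theorem; it is quoted from Banica--Collins and Banica--Bichon. So I am judging your outline on its own merits.

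The following parts are fine:
\begin{itemize}
\item the reduction to injectivity of $\rho$;
\item the deduction that every irreducible quantum permutation is then a summand of some $\pi^g$;
\item coamenability of $S_4^+$ from the law of $\chi$ under the Haar state (free Poisson, with support $[0,4]$).
\end{itemize}

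The genuine gap is the claim that $\varphi = (\tr \otimes \int_{SO(3)}) \circ \rho$ is the Haar state $h$.

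First, $\rho$ does not intertwine $\Delta$ with any coproduct on $C(SO(3), M_4(\mathbb{C}))$. Evaluated at $(g,g')$, the map $(\rho \otimes \rho)\Delta$ gives the $16$-dimensional quantum permutation $\pi^g \otimes \pi^{g'}$, not $\pi^{gg'}$. So invariance of $\varphi$ is not formal; it is precisely the nontrivial content of the Banica--Collins result.

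Second, $\varphi(\chi^k) = h(\chi^k)$ for all $k$ only says that $\varphi$ and $h$ agree on polynomials in $\chi$, and this does not force $\varphi = h$. Since $h$ is tracial, $a \mapsto h((1 + \varepsilon v) a)$ with $v = u_{1,1} - u_{2,2}$ and $0 < \varepsilon \leq 1$ is a state. It agrees with $h$ on every $\chi^k$: apply the automorphism $u_{x,y} \mapsto u_{s(x), s(y)}$ for the transposition $s = (1\,2)$, which fixes $\chi$ and $h$. But it does not agree with $h$ on $v$. So ``equality of these moments shows that $\rho$ is faithful on the dense Hopf $*$-algebra'' does not follow.

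Your moment values are correct, though not for the stated reason. In fact $\tr \rho(\chi)(g) = 1$ for every $g$. What is true is that $\rho(\chi)(g)$ is diagonal in the basis $(W_c)$ of $M_2(\mathbb{C})$, with eigenvalues $|\mathrm{Tr}(W_c^* g)|^2$. This gives $\int_{SU(2)} |\mathrm{Tr}\, g|^{2k}\, dg = C_k$.

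One way to close the gap is to replace the coproduct claim by a fusion rule.
\begin{itemize}
\item Let $\Omega = \sum_s W_s^* \otimes W_s$, which is twice the flip in $M_2(\mathbb{C}) \otimes M_2(\mathbb{C})$.
\item For $t \in \mathbb{Z}/2\mathbb{Z} \times \mathbb{Z}/2\mathbb{Z}$, the maps $C \mapsto \frac{1}{2}\Omega (W_t^* \otimes C)$ are isometric intertwiners from $\pi^{g W_t g'}$ into $\pi^g \otimes \pi^{g'}$, with mutually orthogonal ranges.
\item Hence $\pi^g \otimes \pi^{g'} \cong \bigoplus_t \pi^{g W_t g'}$, and invariance of Haar measure gives $(\varphi \otimes \varphi)\Delta = \varphi$.
\item For an idempotent state, the matrix $(\varphi(v_{ij}))_{i,j}$ is idempotent for every irreducible corepresentation $v$, and its trace is $\varphi(\chi_v)$.
\item The irreducible characters of $S_4^+$ are polynomials in $\chi$, so your moment computation gives $\varphi(\chi_v) = 0$ for $v \neq 1$. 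A trace-zero idempotent is zero, hence $\varphi = h$, and coamenability then gives injectivity.
\end{itemize}

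Two smaller points.

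Your exclusion of the $3+1$ splitting cannot run through Lemma \ref{quantumpermutationssmalln}, which is about three points, not three dimensions. Moreover, commutativity of the complementary block would contradict your own $2+1+1$ case. A cleaner argument: a one-dimensional summand with permutation $s$ forces $W_{s(y)} g W_y^* \propto W_{s(0)} g$ for all $y$. So conjugation by $g$ preserves the Pauli matrices up to phase, and $\pi^g$ is classical. This rules out both $3+1$ and $2+1+1$.

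In the easy direction of the equivalence statement, no relabelling is needed. The elements $\alpha, \beta$ commute with every $W_{a,b}$ up to sign, so $T \mapsto \alpha T \beta$ is a unitary implementing $\pi^g \cong \pi^{\alpha g \beta}$ directly. The converse via trace invariants is only sketched, and the recovery of signs still needs an actual argument.
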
 

We shall not give a proof of Theorem \ref{BBfourpoints} here. In fact, even the statement of the theorem involves some terminology which we have not yet explained, such as \emph{direct summands} of quantum permutations and \emph{equivalence}. These notions will be discussed in section \ref{ch2}.

The construction in Theorem \ref{BBfourpoints} provides a \emph{matrix model} for magic unitary $ 4 \times 4 $-matrices. 
We point the reader to Part IV of \cite{BANICA_quantumpermutationgroups} for more information on matrix models for quantum permutations.

\subsection{Quantum automorphisms of graphs} \label{sec_qutgraph}

For higher values of $ n $, the structure of general quantum permutations of $ n $ points is hard to describe concretely. It is therefore natural to look for quantum permutations with additional properties. A prominent example of such ``structured'' quantum permutations arises from quantum symmetries of graphs, see \cite{BANICA_quthomogeneousgraphs}, \cite{BANICA_BICHON_ordereleven}, \cite{SCHMIDT_distancetransitive}.

In the sequel, by a \emph{graph} we mean a simple, undirected graph without self-loops. 
More precisely, a graph $ X = (V_X, E_X) $ consists of a set $ V_X $ of vertices and a set $ E_X \subset V_X \times V_X $ of edges. The conditions on the edge set $ E_X $ are that $ (x,y) \in E_X $ iff $ (y,x) \in E_X $ and $ (x,x) \notin E_X $ for all $ x \in V_X $. 
We note that the structure of $ X $ is completely determined by the set $ V_X $ together with the \emph{adjacency matrix} $ A_X $, which is the $ V_X \times V_X $-matrix with entries 
$$
(A_X)_{x,y} = 
\begin{cases} 
1 \text{ if } (x,y) \in E_X \\
0 \text{ else.} 
\end{cases}
$$
The graph $ X $ is called \emph{finite} if $ V_X $ is a finite set, and $ E_X $ is clearly finite as well in this case since we do not allow multiple edges.

If $ X = (V_X, E_X), Y = (V_Y, E_Y)$ are graphs then a map $ \sigma: V_X \rightarrow V_Y $ is called a \emph{graph homomorphism} from $ X $ to $ Y $ if $ (x,y) \in E_X $ implies $ (\sigma(x), \sigma(y)) \in E_Y $ for all $ x,y \in V_X $. By definition, a \emph{graph isomorphism} is a bijective graph homomorphism. In this case, the condition that adjacency between vertices is preserved can be phrased equivalently as saying that the matrix $ u^\sigma $ associated to $ \sigma $, defined by the formula in the discussion before Lemmma \ref{permmatrices}, satisfies $ A_Y u^\sigma = u^\sigma A_X $. For the moment we are only interested in finite graphs, but we note that the matrix products $ A_Y u^\sigma $ and $ u^\sigma A_X $ make sense in general. We will say more about infinite graphs in section \ref{sec_infinite}. 

A \emph{graph automorphism} of $ X $ is a graph isomorphism $ \sigma $ from $ X $ to itself and, according to the above, the associated permutation matrix then satisfies $ A_X u^\sigma = u^\sigma A_X $. 
With this in mind, we arrive naturally at the following definition of a \emph{quantum automorphism}, compare \cite{BANICA_quthomogeneousgraphs}. 

\begin{definition} \label{defquatfinitegraph}
Let $ X = (V_X, E_X) $ be a finite graph. A quantum automorphism of $ X $ is a quantum permutation $ (\Hc,u) $ of $ V_X $ such that $ A_X u = u A_X $.  
\end{definition} 

In this definition, the formula $ A_X u = u A_X $ is understood as an equality in $ M_n(B(\Hc)) $ where $ n = |V_X| $, and the entries of $ A_X $ are viewed as scalar multiples of the identity in $ B(\Hc) $ in the obvious way. Extending Definition \ref{defquatfinitegraph}, we also say that two finite graphs $ X, Y $ are \emph{quantum isomorphic} if they both have the same number $ n = |V_X| = |V_Y| $ of vertices and there exists a quantum permutation $ (\Hc, u) $ of $ [n] $ such that $ A_Y u = u A_X $. Here we tacitly identify $ V_X = [n] = V_Y $. 

\begin{lemma} \label{adjacencyrelations}
Let $ X = (V_X, E_X) $ be a finite graph with adjacency matrix $ A_X $ and let $ (\Hc, u) $ be a quantum permutation of the vertex set $ V_X $. Then the following conditions are equivalent. 
\begin{bnum} 
\item[a)] $ (\Hc,u) $ is a quantum automorphism of $ X $. 
\item[b)] For all $ i,j,k,l \in V_X $ we have 
$
u_{i,j} u_{k,l} = 0 = u_{k,l} u_{i,j} 
$
whenever $ (A_X)_{i,k} \neq (A_X)_{j,l} $. 
\end{bnum} 
\end{lemma}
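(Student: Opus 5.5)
We prove the two implications separately, working throughout with the entries of the matrix equation $A_X u = u A_X$. Since $A_X$ has scalar entries, the $(i,l)$ entry of this equation reads
$$
\sum_{k} (A_X)_{i,k} u_{k,l} = \sum_{j} u_{i,j} (A_X)_{j,l}.
$$
The only tools needed are the magic unitary relations from Definition \ref{defqperm}. Entries in the same row are mutually orthogonal projections, since they are projections summing to $1$ (Exercise \ref{exI1}). Likewise entries in the same column are mutually orthogonal. Finally, each row sum and each column sum equals $1$.

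For b) $\Rightarrow$ a), fix $i,l$ and insert $1 = \sum_j u_{i,j}$ on the left of the term $u_{k,l}$. This gives
$$
(A_X u)_{i,l} = \sum_{j,k} (A_X)_{i,k}\, u_{i,j} u_{k,l}.
$$
By b), the product $u_{i,j}u_{k,l}$ vanishes unless $(A_X)_{i,k} = (A_X)_{j,l}$, so we may replace $(A_X)_{i,k}$ by $(A_X)_{j,l}$ in every surviving term. Summing over $k$ with $\sum_k u_{k,l} = 1$ then yields $\sum_j u_{i,j}(A_X)_{j,l} = (u A_X)_{i,l}$. This proves a).

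For a) $\Rightarrow$ b), fix $i,j,k,l$ with $(A_X)_{i,k} \neq (A_X)_{j,l}$. As adjacency entries are $0$ or $1$, there are two cases: $(A_X)_{i,k}=1$ and $(A_X)_{j,l}=0$, or the reverse. The idea is to multiply the $(i,l)$ entry of $A_X u = u A_X$ on the left by $u_{i,j}$ and on the right by $u_{k,l}$. On the right-hand side, row orthogonality gives $u_{i,j}u_{i,j'} = \delta_{j,j'}u_{i,j}$, so we obtain $(A_X)_{j,l}\, u_{i,j} u_{k,l}$. On the left-hand side, column orthogonality gives $u_{k',l}u_{k,l} = \delta_{k,k'}u_{k,l}$, so we obtain $(A_X)_{i,k}\, u_{i,j}u_{k,l}$. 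Hence
$$
\bigl((A_X)_{i,k} - (A_X)_{j,l}\bigr)\, u_{i,j}u_{k,l} = 0,
$$
and therefore $u_{i,j}u_{k,l} = 0$. Taking adjoints, and using that all entries are self-adjoint, gives $u_{k,l}u_{i,j} = 0$ as well.

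The main point of care is the bookkeeping of indices in this last step. The sandwiching must be done with $u_{i,j}$ on the left and $u_{k,l}$ on the right, so that on each side exactly one index collapses by orthogonality. Beyond this the argument is routine.
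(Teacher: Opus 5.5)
Your proof is correct. The implication b) $\Rightarrow$ a) matches the paper's argument up to a mirror image. You insert the row sum $\sum_j u_{i,j}=1$ to the left of $u_{k,l}$ in $(A_Xu)_{i,l}$, while the paper inserts the column sum $\sum_p u_{p,s}=1$ to the right of $u_{r,t}$ in $(uA_X)_{r,s}$.

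For a) $\Rightarrow$ b) you take a different and shorter route. The paper treats the two cases separately and sandwiches the $(i,l)$ entry of $A_Xu=uA_X$ symmetrically: by $u_{k,l}$ on both sides when $(A_X)_{i,k}=0$ and $(A_X)_{j,l}=1$, and by $u_{i,j}$ on both sides in the reverse case. In the first case this gives a vanishing sum of positive operators $\sum_t (A_X)_{t,l}\,u_{k,l}u_{i,t}u_{k,l}$. From it the paper extracts $u_{k,l}u_{i,j}u_{k,l}=0$, and then $u_{i,j}u_{k,l}=0$ via $(u_{i,j}u_{k,l})^*u_{i,j}u_{k,l}=u_{k,l}u_{i,j}u_{k,l}$.

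Your asymmetric sandwich $u_{i,j}(\,\cdot\,)u_{k,l}$ collapses both sides to scalar multiples of the single product $u_{i,j}u_{k,l}$. Both cases are then handled at once by $\bigl((A_X)_{i,k}-(A_X)_{j,l}\bigr)u_{i,j}u_{k,l}=0$. No positivity or $C^*$-identity is needed, only the orthogonality of entries within a row and within a column. You use an adjoint for the reversed product, but even that could be replaced by the same trick applied to the $(k,j)$ entry together with the symmetry of $A_X$.

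So your argument is purely algebraic, while the paper's is the more typical operator-theoretic positivity argument, in the spirit of Lemma \ref{orthogonality}. Both extend to infinite graphs, since left and right multiplication by fixed bounded operators are strongly continuous.
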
 

\begin{proof} 
Note first that the condition $ A_X u = u A_X $ means
\begin{align*}
\sum_t (A_X)_{r,t} u_{t,s} = (A_X u)_{r,s} = (u A_X)_{r,s} = \sum_t (A_X)_{t,s} u_{r,t}
\end{align*}
for all $ r,s \in V_X $. 

$ a) \Rightarrow b) $ Assume that $ i,k,j,l \in V_X $ are such that $ (A_X)_{i,k} = 0, (A_X)_{j,l} = 1 $. 
Consider $ r = i, s = l $ in the above formula and multiply by $ u_{k,l} $ from the left and from the right to get
$$
\sum_t (A_X)_{t,l} u_{k,l} u_{i,t} u_{k,l} = \sum_t (A_X)_{i,t} u_{k,l} u_{t,l} u_{k,l} = (A_X)_{i,k} u_{k,l} = 0.  
$$
Since the elements $ u_{k,l} u_{i,t} u_{k,l} $ are positive and $ (A_X)_{j,l} = 1 $ we conclude that $ u_{k,l} u_{i,j} u_{k,l} $ is zero. 
But this means $ u_{i,j} u_{k,l} = 0 $ since $ (u_{i,j} u_{k,l})^* u_{i,j} u_{k,l} = u_{k,l} u_{i,j} u_{k,l} $. 

If $ (A_X)_{i,k} = 1 $ and $ (A_X)_{j,l} = 0 $ we can run a similar argument. Consider again $ r = i, s = l $ and multiply the above relation by $ u_{i,j} $ 
from the left and right. We get 
$$
0 = (A_X)_{j,l} u_{i,j} = \sum_t (A_X)_{t,l} u_{i,j} u_{i,t} u_{i,j} = \sum_t (A_X)_{i,t} u_{i,j} u_{t,l} u_{i,j} 
$$
in this case, and deduce $ u_{i,j} u_{k,l} = 0 $. 

$ b) \Rightarrow a) $ Assume that $ u_{i,j} u_{k,l} = 0 = u_{k,l} u_{i,j} $ whenever $ (A_X)_{i,k} \neq (A_X)_{j,l} $. Then 
\begin{align*}
\sum_t (A_X)_{t,s} u_{r,t} = \sum_{t,p} (A_X)_{t,s} u_{r,t} u_{p,s} 
&= \sum_{t,p \mid (A_X)_{t,s} = 1} u_{r,t} u_{p,s} \\
&= \sum_{t, p \mid (A_X)_{r,p} = 1} u_{r,t} u_{p,s} \\
&= \sum_{t, p} (A_X)_{r,p} u_{r,t} u_{p,s} = \sum_p (A_X)_{r,p} u_{p,s} 
\end{align*}
for all $ r,s \in V_X $ as required. 
\end{proof} 

It is in general a difficult problem to determine the quantum automorphisms of a given graph $ X $. We say that $ X $ has \emph{no quantum symmetry} if every 
irreducible quantum automorphism of $ X $ is classical, that is, of dimension one. Otherwise we say that $ X $ \emph{has quantum symmetry}.

The existence of quantum symmetry is completely understood only for graphs with a small number of vertices, compare \cite{BANICA_BICHON_ordereleven}.

\subsection{The graph isomorphism game} 

The study of quantum symmetries of graphs has an intriguing link to quantum information theory through 
the \emph{graph isomorphism game} \cite{ATSERIAS_MANCINSKA_ROBERSON_SAMAL_SEVERINI_VARVITSIOTIS_quantumgraphisomorphism}, \cite{LUPINI_MANCINSKA_ROBERSON_quantumpermutationgroups}. This 
is an example of a \emph{non-local game}, that is, a game played by two cooperating players, Alice and Bob, against a referee. In each round of the game, the referee sends both Alice and Bob an input, and they have to return an output independently of each other. The players are allowed to agree a common strategy at the outset, but they are not allowed to communicate during the game.  Whether Alice and Bob win the game is determined by a verifier function which, together with the input and output sets, is known to all parties beforehand. 

In the case of the graph isomorphism game the input data consists of two finite graphs $ X, Y $, and the task for Alice and Bob is to 
convince the referee that they can produce a graph isomorphism between $ X $ and $ Y $. 
The input and output sets of this game are the same, given by the (disjoint) union $ V = V_X \cup V_Y $ of the underlying vertex sets of $ X, Y $. In each round of the game, the referee sends vertices $ x,y \in V $ to Alice and Bob, respectively, which may or may not come from the same graph. 
Alice and Bob have to return vertices $ a,b \in V $, and these should satisfy the following conditions. Firstly, if $ x $ is from $ V_X $ then $ a $ has to be from $ V_Y $ and vice versa, and the same for $ y $ and $ b $. If this is satisfied, then two of the four vertices $ a,b, x, y $ are from $ X $ and two are from $ Y $. The second condition is that the pair from $ X $ must satisfy the same adjacency relation as the pair from $ Y $. For instance,  
if $ x,y $ both come from $ Y $ 
then in order for Alice and Bob to win the game their answers $ a, b $ must come from $ X $ and be equal/adjacent/non-adjacent iff $ x, y $ are. 
The game can be repeated many times, in order to reduce the likelihood that Alice and Bob win by chance, but it is not required that the players give the same answers to the same questions in subsequent rounds of the game. 

As already discussed above, Alice and Bob are not allowed to communicate during the game. This means that they do not know which vertex their partner is being sent, or which vertex 
the other is returning. They are allowed, however, to agree a \emph{strategy} in advance. In mathematical terms, a strategy is encoded by a 
\emph{correlation}, or conditional probability density. We will first give the following general definition, and then return to the case of the graph isomorphism game. 

\begin{definition} \label{defcorrelation}
Fix finite sets $ I $ and $ O $ of cardinality $ |I| = n $ and $ |O| = k $, respectively. 
A correlation is a map $ p: O \times O \times I \times I \rightarrow [0,1] $ 
such that $ \sum_{a,b \in O} p(a,b|x,y) = 1 $ for all $ x, y \in I $. 
\end{definition} 

There are various classes of correlations, determined by the way they can be obtained. By definition, if there is a map $ f: I \times I \rightarrow O \times O $ such that 
$$ 
p(a,b|x,y) = 
\begin{cases}
1 & \text{if } f(x,y) = (a,b) \\
0 & \text{else}
\end{cases}
$$
then $ p $ is called a \emph{deterministic} correlation. 

Another important class of correlations is obtained in the following way. 
Let $ \Hc $ be a Hilbert space and let $ \psi \in \Hc $ be a unit vector. Assume moreover that $ A^a_x, B^b_y $ 
are positive operators in $ B(\Hc) $ such that $ \sum_a A^a_x = 1 = \sum_b B^b_y $ for all $ x, y \in I $, and that the operators $ A^a_x $ and $ B^b_y $ commute 
for all $ a,b \in O $ and $ x,y \in I $. Then we obtain a correlation $ p $ by setting 
$$
p(a,b|x,y) = \bra \psi, A^a_x B^b_y \psi \ket.  
$$
Correlations of this form are called  \emph{quantum commuting}. 

Recall that in the case of the graph isomorphism game we have $ I = O = V_X \cup V_Y $. 
A correlation $ p $ is called \emph{perfect} or, equivalently, a \emph{winning strategy} for the graph isomorphism game if $ p(a,b|x,y) \neq 0 $ implies that $ a,b $ is a valid answer to the question $ x,y $. For instance, if $ \sigma: X \rightarrow Y $ is a graph isomorphism then the deterministic correlation $ I \times I \rightarrow O \times O $ 
obtained by extending $ \sigma $ and $ \sigma^{-1} $ in the obvious way is a winning strategy. 

The following result from \cite{ATSERIAS_MANCINSKA_ROBERSON_SAMAL_SEVERINI_VARVITSIOTIS_quantumgraphisomorphism}, \cite{LUPINI_MANCINSKA_ROBERSON_quantumpermutationgroups} shows that quantum isomorphism of graphs can be expressed in terms of winning strategies for the graph isomorphism game. 

\begin{theorem} \label{qimagicchar}
Two finite graphs $ X, Y $ are quantum isomorphic iff the graph isomorphism game for $ X $ and $ Y $ admits a winning quantum commuting strategy. 
\end{theorem}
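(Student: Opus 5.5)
Both directions go through an explicit dictionary between magic unitaries and projection-valued strategies. The forward direction is a construction. The reverse direction requires extracting a magic unitary from a perfect strategy, and the vector state \(\psi\) is what makes that delicate.

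\emph{From quantum isomorphism to a winning strategy.} Let \((\Hc,u)\) be a quantum permutation with \(A_Y u = u A_X\), where \(u_{y,x}\) is indexed by \(y\in V_Y\), \(x\in V_X\). For simplicity first assume \(u\) acts on a Hilbert space carrying a faithful tracial state; in finite dimensions this is the normalised trace. To separate Alice and Bob, pass to \(\Hc\otimes\overline{\Hc}\) with the maximally entangled vector \(\psi\), or more generally to the GNS space of a tracial state with left and right actions. Set
\begin{itemize}
\item \(A^a_x = u_{a,x}\otimes 1\) for \(x\in V_X\), \(a\in V_Y\);
\item \(A^a_x = u_{x,a}\otimes 1\) for \(x\in V_Y\), \(a\in V_X\);
\item \(A^a_x = 0\) whenever \(a,x\) lie in the same graph;
\item \(B^b_y = 1\otimes \overline{u_{b,y}}\) (transpose), and analogously for \(y\in V_Y\).
\end{itemize}
The magic relations give \(\sum_a A^a_x=1\) and \(\sum_b B^b_y=1\), and the tensor factors commute. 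With the trace identity \(\bra\psi,(S\otimes \overline{T})\psi\ket=\tr(ST^*)\), or \(\tau(ST)\) in general, we get \(p(a,b|x,y)=\tau(u_{a,x}u_{b,y})\) up to index conventions. This vanishes exactly when \(u_{a,x}u_{b,y}=0\). By the analogue of Lemma \ref{adjacencyrelations} for the intertwining relation \(A_Y u=uA_X\), proved verbatim, \(u_{a,x}u_{b,y}=0\) whenever \(\mathrm{rel}(a,b)\neq \mathrm{rel}(x,y)\). Equality is also covered, since distinct entries in a row or column are orthogonal. Hence every nonzero probability is a valid answer.

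The main obstacle is that a general \(\Hc\) has no trace. I would fix this by replacing \(B(\Hc)\) with the universal \(C^*\)-algebra generated by a magic unitary satisfying \(A_Yu=uA_X\). This algebra is nonzero because the given representation exists. I would then use the known fact that it carries a faithful tracial state, inherited from the Haar state of the quantum automorphism group of \(X\sqcup Y\) via the Hopf–Galois/bigalois structure. Citing this from \cite{LUPINI_MANCINSKA_ROBERSON_quantumpermutationgroups} is the step I expect to need rather than reprove.

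\emph{From a winning strategy to quantum isomorphism.} Let \(p(a,b|x,y)=\bra\psi,A^a_xB^b_y\psi\ket\) be perfect. First replace \(\Hc\) by \(\overline{\mathcal{A}\psi}\) and restrict to the reduced setting, so that \(\psi\) becomes cyclic for the algebra generated by Alice's operators. Next use perfection on questions \(x=y\) to show \(A^a_x\psi=B^a_x\psi\). Then deduce that the \(A^a_x\) compressed in this way are projections satisfying the orthogonality relations \(A^a_xA^b_y=0\) for invalid pairs. The key computation is
\[
\bra\psi, A^a_xA^b_y\psi\ket=\bra\psi,A^a_xB^b_y\psi\ket=0,
\]
combined with positivity and cyclicity. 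Define \(u_{y,x}=A^y_x\) for \(x\in V_X\). Projections that are orthogonal across answers and sum to \(1\) give row sums \(1\). Column sums \(1\) come from the consistency \(A^y_x=A^x_y\) forced by questions \(x\in V_X\), \(y\in V_Y\). The adjacency relations then follow from Lemma \ref{adjacencyrelations} b)\(\Rightarrow\)a), in its intertwiner version, yielding \(A_Yu=uA_X\).
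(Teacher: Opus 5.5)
The paper gives no proof of Theorem~\ref{qimagicchar}; it quotes the result from \cite{ATSERIAS_MANCINSKA_ROBERSON_SAMAL_SEVERINI_VARVITSIOTIS_quantumgraphisomorphism} and \cite{LUPINI_MANCINSKA_ROBERSON_quantumpermutationgroups}. Your outline is the argument used in those sources. The converse is the standard synchronous-game reduction. The forward direction rests on the theorem of Lupini--Man\v{c}inska--Roberson that the universal $C^*$-algebra of a magic unitary with $A_Yu=uA_X$, once nonzero, admits a tracial state. The structure is sound, but for infinite-dimensional $\Hc$ the whole content of ``quantum isomorphic $\Rightarrow$ winning strategy'' lies in that cited step. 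By your own converse argument, every perfect strategy induces a tracial vector state on Alice's algebra, which is a quotient of the universal one. So the forward direction is exactly the problem of producing a tracial state from a mere representation, and that is where the Kac-type quantum group structure enters. Your forward direction is therefore a reduction to that theorem, not a proof of it; only the finite-dimensional case is proved.

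There are also some smaller corrections.

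\emph{Faithfulness.} The trace coming from the quantum group structure is in general not faithful on the universal algebra. For $X=Y$ edgeless on $n\ge 5$ vertices it is the Haar state on the universal $C(S_n^+)$, and $S_n^+$ is not coamenable. You never need faithfulness: perfection only uses that $u_{a,x}u_{b,y}=0$ forces $\tau(u_{a,x}u_{b,y})=0$. So drop ``exactly when''.

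\emph{Cyclicity in the converse.} Cyclicity of $\psi$ for Alice's algebra $\mathcal{A}$ does not by itself let you pass from $T\psi=0$ to $T=0$ on $\overline{\mathcal{A}\psi}$. What does it is this: once $A^a_x\psi=B^a_x\psi$ for all $a,x$, every word $c$ in Alice's operators satisfies $c\psi=c'\psi$, where $c'$ is the reversed word in Bob's commuting operators. Hence $Tc\psi=c'T\psi=0$ for $T$ in the algebra generated by Alice's operators. Apply this to the following four cases.
\begin{itemize}
\item $T=A^a_x-(A^a_x)^2$, using $\sum_a\langle\psi,(A^a_x-(A^a_x)^2)\psi\rangle=0$ from perfection on diagonal questions.
\item $T=A^a_xA^b_y$ for invalid pairs, using $\langle\psi,A^a_xB^b_y\psi\rangle=\|(B^b_y)^{1/2}(A^a_x)^{1/2}\psi\|^2$.
\item $T=A^a_x$ for answers $a$ from the wrong graph.
\item $T=A^y_x-A^x_y$, using the mixed questions.
\end{itemize}
Together these give every relation you list.

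\emph{Vertex counts.} The paper's definition of quantum isomorphism requires $|V_X|=|V_Y|$. Summing all entries of your $V_Y\times V_X$ magic unitary gives $|V_X|\cdot 1=|V_Y|\cdot 1$ on the nonzero space $\overline{\mathcal{A}\psi}$, which settles this.
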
 

It is an intriguing fact that Alice and Bob can win the $ X $-$ Y $-graph isomorphism game with a perfect quantum commuting strategy even if the graphs $ X $ and $ Y $ are not isomorphic. In other words, there exist pairs of graphs which are quantum isomorphic but not isomorphic  \cite{ATSERIAS_MANCINSKA_ROBERSON_SAMAL_SEVERINI_VARVITSIOTIS_quantumgraphisomorphism}. 

Remarkably, it turns out that quantum isomorphism of graphs can be expressed purely in graph theoretical terms. This is the content of the 
following celebrated result of Man\v{c}inska-Roberson \cite{MANCINSKA_ROBERSON_quantumisoplanar}.  

\begin{theorem} \label{qiMRchar} 
Let $ X, Y $ be finite graphs. Then the following conditions are equivalent. 
\begin{bnum}
\item[a)] $ X $ and $ Y $ are quantum isomorphic. 
\item[b)] For every planar graph $ P $ there exists a bijection $ \Hom(P, X) \cong \Hom(P,Y) $. 
\end{bnum}
\end{theorem}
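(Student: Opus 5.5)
The plan follows the Man\v{c}inska--Roberson strategy, which runs through the representation theory of the quantum permutation group and its planar-algebra or partition description.

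\emph{Step 1: reduce to an isomorphism of intertwiner categories.} First I translate a), quantum isomorphism of $X$ and $Y$, into a statement about the quantum automorphism groups $\Qut(X)$ and $\Qut(Y)$. The $C^*$-algebra generated by the entries of a magic unitary $u$ with $A_Y u = u A_X$ yields a bi-Galois object, that is, a monoidal equivalence between the representation categories of $\Qut(X)$ and $\Qut(Y)$. This equivalence sends the fundamental representation on $\mathbb{C}^{V_X}$ to the one on $\mathbb{C}^{V_Y}$. Concretely, for all $k,l$ it identifies the intertwiner spaces $\Hom(u^{\otimes k}, u^{\otimes l})$ for $X$ and for $Y$, compatibly with composition, tensor product and adjoints. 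Conversely, such a compatible identification produces a nonzero linking algebra, and hence a magic unitary intertwining $A_X$ and $A_Y$ on some Hilbert space. So a) is equivalent to the existence of a $C^*$-tensor functor between the two concrete categories $\mathcal{C}_X$ and $\mathcal{C}_Y$ which matches their generators.

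\emph{Step 2: describe the intertwiners diagrammatically.} I will show that $\mathcal{C}_X$ is generated, as a concrete rigid $C^*$-tensor category, by the multiplication and unit maps of $\mathbb{C}^{V_X}$ together with $A_X$. This is the quantum-group analogue of Banica's theorem that $S_n^+$ has Temperley--Lieb (noncrossing partition) intertwiners. It follows that the morphisms of $\mathcal{C}_X$ are spanned by the operators $T^P$ associated to bilabelled planar graphs $P$, that is, planar graphs with input and output vertices on the boundary. The entries of $T^P$ are homomorphism counts from $P$ to $X$ with the boundary vertices fixed. Composition, tensor product and adjoint correspond to series composition, parallel composition and reflection of bilabelled graphs. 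Together these three operations generate all planar bilabelled graphs.

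\emph{Step 3: compare the two sides.} Condition a) now says that $P \mapsto T^P_X$ and $P \mapsto T^P_Y$ define isomorphic representations of the same planar ``graph category''. Because the functor is unitary, the map $T^P_X \mapsto T^P_Y$ is well defined and linear exactly when the Gram data agree. Those Gram data are traces $\tr(T^{P}_X)$ of closed diagrams, and these traces are precisely $|\Hom(P,X)|$ for closed planar graphs $P$. So b) gives equality of all inner products, which makes the assignment an isometric, hence well-defined, tensor functor and yields a) via Step 1. Conversely, a tensor functor preserves the scalars $\End(\mathbb{1})$, so a) gives $|\Hom(P,X)| = |\Hom(P,Y)|$, which is b).

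The main obstacle is Step 2: proving that planar bilabelled graphs exhaust all intertwiners of $\Qut(X)$. I would first use Tannaka--Krein--Woronowicz duality. The concrete category generated by $m$, $\eta$ and $A_X$ determines a compact quantum group whose fundamental representation is a magic unitary commuting with $A_X$. Its universal property identifies this quantum group with $\Qut(X)$, so its intertwiners are exactly the span of the $T^P$. The remaining delicate point is the correspondence between series and parallel composition of graphs and composition and tensor product of maps. This is a check that contraction over internal vertices counts homomorphisms, and I expect it to be routine but lengthy.
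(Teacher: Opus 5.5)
The paper does not prove this theorem; it only cites Man\v{c}inska--Roberson, and your outline follows their strategy. Steps 1 and 3 are fine modulo standard citations. The converse in Step 1 needs a Bichon--De Rijdt--Vaes type result. Also, the algebra generated by one particular $u$ is only a quotient of the universal linking algebra, and it is the latter that is the bi-Galois object.

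The genuine gap is in Step 2, exactly where planarity enters. Tannaka--Krein--Woronowicz duality, plus the routine check that series composition, parallel composition and reflection correspond to composition, tensor product and adjoint of the matrices $T^K$, only gives $\mathcal{C}_X = \operatorname{span}\{T^K_X : K \in \mathcal{G}\}$. Here $\mathcal{G}$ is the class of bilabelled graphs \emph{generated} by the basic graphs encoding $m$, $\eta$, $A_X$ and the identity. With only this, Steps 1--3 prove the tautology that a) holds iff $|\Hom(P,X)| = |\Hom(P,Y)|$ for all closed $P \in \mathcal{G}$. The content of the theorem is the identification of $\mathcal{G}$ with the class $\mathcal{P}$ of planar bilabelled graphs. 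You assert this in one sentence and never argue it; the point you flag as delicate is the routine one.

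Both inclusions are needed. For b) $\Rightarrow$ a) you need $\mathcal{G} \subseteq \mathcal{P}$, so that every Gram entry $\tr((T^{K'}_X)^* T^K_X)$ is a homomorphism count from a planar graph. This forces a careful definition of $\mathcal{P}$: the labelled vertices must lie on the outer face in the cyclic order $a_1, \dots, a_k, b_l, \dots, b_1$. With your notion (planar, with the labelled vertices on the boundary), series composition does not preserve planarity. Glue two copies of the wheel $W_4$ along their rims, labelled in the cyclic orders $1,2,3,4$ and $1,3,2,4$: the result contains $K_5$.

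For a) $\Rightarrow$ b) you need the converse inclusion $\mathcal{P} \subseteq \mathcal{G}$, at least for closed graphs. Every planar graph must actually be assembled from the generators by these operations; otherwise a) only gives equal counts for the graphs you can build. This decomposition statement is the main combinatorial theorem of Man\v{c}inska--Roberson and takes real work. One way to supply it is the following. Fix a planar drawing of $P$. Replace each vertex by a spider built from $m$, $\eta$ and their adjoints, and each edge by a strand carrying an $A_X$-box. Then cut the drawing by a generic height function into layers, each containing a single spider, box, cup or cap. Some argument of this kind is indispensable.
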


Here, for graphs $ P, Q $, we write $ \Hom(P,Q) $ for the set of all (classical) graph homomorphisms from $ P $ to $ Q $. 
Theorem \ref{qiMRchar} shows in particular that if $ X $ and $ Y $ are quantum isomorphic then they do not only have the same number of vertices, 
but also the same number of edges.

We note that if $ X, Y $ are quantum isomorphic then the proof of Theorem  \ref{qiMRchar} does not provide explicit maps realising the bijections $ \Hom(P, X) \cong \Hom(P,Y) $, it rather shows that the cardinalities of the sets $ \Hom(P, X) $ and $ \Hom(P, Y) $ are the same. Let us also remark that the problem of determining whether two graphs are quantum isomorphic is known to be undecidable \cite{ATSERIAS_MANCINSKA_ROBERSON_SAMAL_SEVERINI_VARVITSIOTIS_quantumgraphisomorphism}. 

\subsection{Exercises}

\begin{exercise} \label{exI1}
Let $ p_1, \dots, p_n $ be projections in a unital $ C^* $-algebra such that $ p_1 + \cdots + p_n = 1 $. Show that $ p_i p_j = 0 $ for $ i \neq j $. 
\end{exercise} 

\begin{exercise}
Let $ \sigma = (\Hc,u) $ be a quantum permutation of four points. What is the smallest number of mutually commuting matrix entries $ u_{i,j} $ needed to ensure that $ \sigma $ is classical?
\end{exercise}

\begin{exercise} \label{exI3}
Let $ \sigma, \tau $ be classical permutations and let $ (\Hc,u) $ be a quantum permutation of $ [n] $. Show that $ v = (v_{i,j}) $ defined by $ v_{i,j} = u_{\sigma(i), \tau(j)} $ is again a quantum permutation. Use this to complete the proof of Lemma \ref{quantumpermutationssmalln} without any additional computations. 
\end{exercise}

\begin{exercise}
For the group $ A = \mathbb{Z}/2\mathbb{Z} $, find unitaries $ g,h \in U(2) $ such that the Weyl quantum permutation $ \pi^g $ is classical and $ \pi^h $ is non-classical.  
\end{exercise}

\section{Tensor categories and quantum groups} \label{ch2}

In this chapter we discuss some background material from the theory of tensor categories and quantum groups. These two concepts are related through the Tannaka-Krein formalism, and we refer to \cite{NESHVEYEV_TUSET_compactquantumgroups} for more information.
We will focus here on tensor categories associated with quantum permutations. 

\subsection{Constructions with quantum permutations} 

Recall from Definition \ref{defqperm} that a quantum permutation $ (\Hc,u) $ of a finite set $ X $ consists of a Hilbert space $ \Hc $ and a matrix $ u = (u_{x,y}) $ of projections $ u_{x,y} \in B(\Hc) $ forming a magic unitary. 

\begin{definition} \label{def_constructions}
Let $ X $ be a finite set and let $ \sigma = (\Hc_\sigma, u^\sigma) $ and $ \tau = (\Hc_\tau, u^\tau) $ be quantum permutations of $ X $. 
\begin{bnum}
\item[a)] The direct sum $ \sigma \oplus \tau = (\Hc_\sigma \oplus \Hc_\tau, u^\sigma \oplus u^\tau) $ is defined by taking the direct sum of the underlying 
Hilbert spaces and setting
$$ 
(u^\sigma \oplus u^\tau)_{x,y} = u^\sigma_{x,y} \oplus u^\tau_{x,y} 
$$ 
for $ x,y \in X $. 
\item[b)] The tensor product $ \sigma \otimes \tau = (\Hc_\sigma \otimes \Hc_\tau, u^\sigma \tp u^\tau) $ is defined by taking the Hilbert space tensor product 
of the underlying Hilbert spaces and setting
$$ 
(u^\sigma \tp u^\tau)_{x,y} = \sum_{t \in X} u^\sigma_{x,t} \otimes u^\tau_{t,y} 
$$ 
for $ x,y \in X $. 
\item[c)] The conjugate $ \overline{\sigma} = (\Hc_{\overline{\sigma}}, u^{\overline{\sigma}}) $ of $ \sigma = (\Hc_\sigma, u^\sigma) $ 
is defined by letting $ \Hc_{\overline{\sigma}} $ be the conjugate Hilbert space of $ \Hc_\sigma $ and taking $ u^{\overline{\sigma}} = (u^{\overline{\sigma}}_{x,y}) $ 
determined by $ u^{\overline{\sigma}}_{x,y}(\overline{\xi}) = \overline{u^\sigma_{y,x}(\xi)} $ for $ \xi \in \Hc_\sigma $ and $ x,y \in X $. 
\end{bnum}
\end{definition} 

We note that the definition of direct sums can be extended to arbitrary families of quantum permutations, possibly infinite, in a straightforward way. 

The constructions in Definition \ref{def_constructions} allow us to obtain new quantum permutations out of given ones. 

\begin{lemma} \label{tensorstructure}
Direct sums, tensor products, and conjugates of quantum permutations are again quantum permutations. 
\end{lemma}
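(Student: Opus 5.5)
We check the two defining conditions of Definition \ref{defqperm} (entries are projections; all row and column sums equal $1$) separately for each of the three constructions.

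\emph{Direct sums.} The operations on $B(\Hc_\sigma \oplus \Hc_\tau)$ act componentwise on block diagonal operators. Hence
\[
(u^\sigma_{x,y} \oplus u^\tau_{x,y})^2 = (u^\sigma_{x,y})^2 \oplus (u^\tau_{x,y})^2 = u^\sigma_{x,y} \oplus u^\tau_{x,y},
\]
and the same holds for adjoints. Likewise $\sum_t u^\sigma_{x,t} \oplus u^\tau_{x,t} = 1 \oplus 1 = 1$, and similarly for column sums. The same argument covers infinite families, using bounded block diagonal operators.

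\emph{Conjugates.} We use that $T \mapsto \overline{T}$, with $\overline{T}(\overline{\xi}) = \overline{T\xi}$, is a linear (not antilinear, since both conjugations cancel) unital $*$-preserving multiplicative map $B(\Hc_\sigma) \to B(\overline{\Hc_\sigma})$. To see this:
\begin{itemize}
\item adjoints are preserved because $\langle \overline{\xi}, \overline{\eta}\rangle = \langle \eta, \xi \rangle$;
\item products are preserved because $\overline{S}\,\overline{T}(\overline{\xi}) = \overline{ST\xi}$.
\end{itemize}
Since $u^{\overline{\sigma}}_{x,y} = \overline{u^\sigma_{y,x}}$, each entry is a projection. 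The row sums of $u^{\overline{\sigma}}$ are the conjugates of the column sums of $u^\sigma$, and vice versa, so all of them equal $\overline{1} = 1$.

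\emph{Tensor products.} This is the only step needing a real argument: the entries $\sum_t u^\sigma_{x,t} \otimes u^\tau_{t,y}$ are sums of projections, and a priori such sums need not be projections.
\begin{itemize}
\item Self-adjointness is immediate.
\item For idempotence we expand the square:
\[
\Big(\sum_t u^\sigma_{x,t}\otimes u^\tau_{t,y}\Big)^2=\sum_{s,t} u^\sigma_{x,s}u^\sigma_{x,t}\otimes u^\tau_{s,y}u^\tau_{t,y}.
\]
By Exercise \ref{exI1}, applied to row $x$ of $u^\sigma$, we have $u^\sigma_{x,s}u^\sigma_{x,t}=0$ for $s\neq t$. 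So only the diagonal terms $s=t$ survive, and the square equals $\sum_t u^\sigma_{x,t}\otimes u^\tau_{t,y}$. Here the orthogonality of projections within a row of a magic unitary is the key input.
\item For the row sums, summing over $y$ first and using the row relations of $u^\tau$ gives
\[
\sum_y\sum_t u^\sigma_{x,t}\otimes u^\tau_{t,y}=\sum_t u^\sigma_{x,t}\otimes 1 = 1\otimes 1.
\]
\item Symmetrically, summing over $x$ first and using the column relations of $u^\sigma$ gives
\[
\sum_x\sum_t u^\sigma_{x,t}\otimes u^\tau_{t,y} = \sum_t 1\otimes u^\tau_{t,y}=1.
\]
\end{itemize}
The finiteness of $X$ guarantees that all sums are finite, so no convergence issues arise.
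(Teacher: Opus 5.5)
Your proof is correct and follows the same route as the paper, which verifies the row and column sums of $u^\sigma \tp u^\tau$ by summing out one index and calls the other cases obvious. Your check that the entries $\sum_t u^\sigma_{x,t}\otimes u^\tau_{t,y}$ are projections, via orthogonality within a row (Exercise \ref{exI1}), fills in a step the paper leaves implicit.

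One harmless slip: the assignment $T \mapsto \overline{T}$ is conjugate-linear, not linear, since $\overline{\lambda T} = \bar\lambda\,\overline{T}$; it is each individual $\overline{T}$ that is a linear operator on $\overline{\Hc_\sigma}$. Your argument only uses additivity, multiplicativity, compatibility with adjoints and $\overline{1}=1$, so nothing changes.
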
 

\begin{proof} 
The case of direct sums is immediate. For tensor products, note that  
$$
\sum_{x \in X} (u^\sigma \tp u^\tau)_{x,y} = \sum_{x, t \in X} u^\sigma_{x,t} \otimes u^\tau_{t,y} = \sum_{t \in X} 1 \otimes u^\tau_{t,y} = 1 \otimes 1 
$$
for all $ y \in X $, and similarly $ \sum_{y \in X} (u^\sigma \tp u^\tau)_{x,y} = 1 $. Hence $ u^\sigma \tp u^\tau $ is a magic unitary as required. 
The case of conjugates is again obvious. 
\end{proof} 

There is also a natural notion of \emph{morphisms} between quantum permutations which we introduce next. 

\begin{definition} \label{defintertwiner}
Let $ X $ be a finite set and let $ \sigma = (\Hc_\sigma, u^\sigma) $ and $ \tau = (\Hc_\tau, u^\tau) $ be quantum permutations of $ X $. 
An intertwiner from $ \sigma $ to $ \tau $ is a bounded linear operator $ T: \Hc_\sigma \rightarrow \Hc_\tau $ such that $ T u^\sigma_{x,y} = u^\tau_{x,y} T $ 
for all $ x, y \in X $. 
\end{definition} 

Using this terminology, we can rephrase the condition that a quantum permutation $ \sigma = (\Hc, u) $ is irreducible, introduced in the previous chapter, as saying that the only intertwiners between $ \sigma $ and itself are multiples of the identity. By definition, a \emph{subobject} of a quantum permutation $ \tau = (\Hc_\tau, u^\tau) $ is a quantum permutation $ \sigma = (\Hc_\sigma, u^\sigma) $ together with an isometric intertwiner from $ \sigma $ to $ \tau $. In this case we can identify $ \Hc_\sigma $ with a closed subspace of $ \Hc_\tau $, in such a way that $ u^\sigma_{x,y} $ is obtained by restriction from $ u^\tau_{x,y} $ for all $ x,y \in X $. 

Let us come back to the question in which sense classical quantum permutations are related to (ordinary) classical permutations. 
If $ \sigma_1, \dots, \sigma_n $ are one-dimensional quantum permutations, so associated to classical permutations of $ X $ by Lemma \ref{qpermdimone}, 
then their direct sum $ \sigma_1 \oplus \cdots \oplus \sigma_n $ is a classical quantum permutation. Moreover, every finite dimensional classical quantum permutation is of this form. In fact, it is not hard to show that \emph{every} classical quantum permutation is equivalent to a (possibly infinite) direct sum of one-dimensional quantum permutations, see Exercise \ref{ex22}. In summary, not every classical quantum permutation can be described by a (single) classical permutation. However, every \emph{irreducible} classical quantum permutation 
is one-dimensional, and associated to a uniquely determined classical permutation.

\subsection{$ C^* $-tensor categories} 

The operations with quantum permutations described in the previous section correspond to the main structural ingredients featuring in a \emph{$ C^* $-tensor category}. In this section we discuss the definition of $ C^* $-tensor categories and state some general facts about them. For a more comprehensive treatment we refer to \cite{NESHVEYEV_TUSET_compactquantumgroups}. 

\subsubsection{$ C^* $-categories} 

Recall that a \emph{category} $ \BC $ is given by a class of objects together with morphism sets $ \BC(X,Y) = \Mor_\BC(X,Y) $ for each pair of objects $ X, Y \in \BC $. By a \emph{$ * $-category} we shall mean a category $ \BC $ with the following structure and properties. Firstly, all morphism spaces $ \BC(X,Y) $ for objects $ X, Y \in \BC $ are complex vector spaces and 
the composition maps $ \BC(X,Y) \times \BC(Y, Z) \rightarrow \BC(X, Z), (f,g) \mapsto g \circ f $ are bilinear. Secondly, the category is equipped with an antilinear involutive contravariant endofunctor $ *: \BC \rightarrow \BC $ which is the identity on objects, mapping $ f \in \BC(X, Y) $ to $ f^* \in \BC(Y, X) $. In particular we have $ 1^* = 1 $ for all identity morphisms. 

\begin{definition}
A \emph{$ C^* $-category} is a $ * $-category $ \BC $ such that all morphism spaces $ \BC(V,W) $ are Banach spaces, the composition 
maps $ \BC(X,Y) \times \BC(Y, Z), (f,g) \mapsto g \circ f $ satisfy $ \|g \circ f\| \leq \|g\| \|f \| $, the $ C^* $-identity 
$$
||f^* \circ f || = ||f ||^2 
$$ 
holds, and $ f^* \circ f \in \BC(X,X) $ is positive for all $ f \in \BC(X, Y) $. 
\end{definition} 

We note that the last condition means that for any $ f \in \BC(X,Y) $ there exists $ g \in \BC(X,X) $ such that $ f^* \circ f = g^* \circ g $. This is a technical requirement needed to exclude certain pathological phenomena. 
It is automatically satisfied for \emph{additive $ C^* $-categories} in the sense discussed further below. 

A basic example of a $ C^* $-category is the category $ \HILB $ of all Hilbert spaces with morphism spaces $ \HILB(\Hc,\K) = B(\Hc,\K) $ given by all bounded linear operators 
between them. Inside $ \HILB $ we have the $ C^* $-category $ \Hilb $ of all finite dimensional Hilbert spaces. Another basic example is to take a unital $ C^* $-algebra $ A $ and view it as a $ C^* $-category $ \BC_A $ with a single object $ \star $ and 
morphism space $ \BC_A(\star, \star) = A $. From this perspective, $ C^* $-categories can be viewed as a ``many-object'' generalisation of (unital) $ C^* $-algebras. 

\begin{definition}
A linear $ * $-functor between $ C^* $-categories $ \BC, \BD $ is a functor $ \Bf: \BC \rightarrow \BD $ such that all associated maps on morphism spaces are linear and $ \Bf(f^*) = \Bf(f)^* $ for all morphisms $ f $. 
\end{definition} 

If $ \Bf: \BC \rightarrow \BD $ is a linear $ * $-functor between $ C^* $-categories then the maps $ \Bf: \BC(X, Y) \rightarrow \BD(\Bf(X), \Bf(Y)) $ are automatically contractive. The proof of this fact is similar to the argument showing that $ * $-homomorphisms between $ C^* $-algebras are contractive.

A \emph{unitary natural isomorphism} $ \tau: \Bf \Rightarrow \Bg $ between linear $ * $-functors $  \Bf, \Bg: \BC \rightarrow \BD $ is a natural transformation such that $ \tau(X) $ is a unitary isomorphism for all $ X \in \BC $. 
Here a unitary isomorphism is a morphism $ u $ such that $ u^* \circ u = 1 $ and $ u \circ u^* = 1 $. 
We write $ \Bf \cong \Bg $ if there exists a unitary natural isomorphism from $ \Bf $ to $ \Bg $. Two $ C^* $-categories $ \BC, \BD $ are called \emph{equivalent} if there exist linear $ * $-functors $ \Bf: \BC \rightarrow \BD $ 
and $ \Bg: \BD \rightarrow \BC $ such that $ \Bf \circ \Bg \cong \id $ and $ \Bg \circ \Bf \cong \id $. 

A linear $ * $-functor $ \Bf: \BC \rightarrow \BD $ is called \emph{faithful} if the associated maps $ \Bf: \BC(X,Y) \rightarrow \BD(\Bf(X), \Bf(Y)) $ are 
injective for all $ X, Y \in \BC $. Every (small) $ C^* $-category admits a faithful embedding into $ \HILB $, compare \cite[Theorem 6.12]{MITCHENER_cstarcategories}.  

\begin{definition} \label{defdirectsum}
Let $ \BC $ be a $ C^* $-category and let $ X_1, \dots, X_n $ be objects in $ \BC $. Then a \emph{direct sum} of $ X_1, \dots, X_n $ 
is an object $ X = X_1 \oplus \cdots \oplus X_n \in \BC $ together with morphisms $ \iota_j: X_j \rightarrow X $ 
such that $ \iota_k^* \circ \iota_l = \delta_{k,l} 1 $ for all $ 1 \leq k,l \leq n $ and 
$$
\sum_{j = 1}^n \iota_j \circ \iota_j^* = 1.  
$$ 
We say that $ \BC $ is additive if every finite family of objects in $ \BC $ has a direct sum. 
\end{definition} 

By definition, an additive $ C^* $-category $ \BC $ is required to contain a zero object, corresponding to the direct sum of the empty family of objects in Definition \ref{defdirectsum}. That is, there is an object $ 0 \in \BC $ such that $ \BC(0, X) = 0 = \BC(X,0) $ for all $ X \in \BC $. 
The $ C^* $-category $ \HILB $ of Hilbert spaces is additive, with direct sums of Hilbert spaces in the usual sense. 

A nonzero object $ X $ in an additive $ C^* $-category $\BC $ is called \emph{irreducible}, or \emph{simple}, if $ \BC(X,X) = \mathbb{C} \id $. Moreover $ \BC $ is called \emph{semisimple} if 
every object is isomorphic to a direct sum of simple objects. The category $ \Hilb $  of finite dimensional Hilbert spaces is an example of a semisimple $ C^* $-category. 

Let $ \BC $ be a $ C^* $-category and $ X \in \BC $. A \emph{subobject} of $ X $ is an object $ U \in \BC $ together with a 
morphism $ \iota: U \rightarrow X $ such that $ \iota^* \circ \iota = 1 $. In this case $ p = \iota \circ \iota^* \in \BC(X,X) $ is a projection.
Conversely, we say that a projection $ p \in \BC(X,X) $ is \emph{split} if there exists a subobject $ U $ of $ X $ with $ \iota: U \rightarrow X $ 
such that $ p = \iota \circ \iota^* $. The $ C^* $-category category $ \BC $ is called \emph{subobject complete} if every projection in $ \BC $ is split. 

The $ C^* $-categories $ \HILB $ and $ \Hilb $ are subobject complete, whereas $ \BC_A $ for a unital $ C^* $-algebra $ A $ is typically not. We will only be interested in subobject complete $ C^* $-categories. 

\subsubsection{$ C^* $-tensor categories} 

The notion of an additive $ C^* $-category captures already much of the structure present in the category of quantum permutations. More precisely, the quantum permutations of a finite set $ X $ form an additive $ C^* $-category $ \PERM^+(X) $ with morphisms given by intertwiners and direct sums as defined in the previous section. What is still missing from the picture is the tensor product operation. This leads us to the following definition. 

\begin{definition} \label{deflinearcstartensor}
A \emph{$ C^* $-tensor category} is a subobject complete additive $ C^* $-category $ \BC $ together with 
\begin{bnum}
\item[$ \bullet $] a bilinear $ * $-functor $ \otimes: \BC \times \BC \rightarrow \BC $, 
\item[$ \bullet $] a simple object $ 1 \in \BC $, 
\item[$ \bullet $] a unitary natural isomorphism 
$$ 
\alpha: \otimes \circ (\otimes \times \id) \Rightarrow \otimes \circ (\id \times \otimes) 
$$ 
written 
$$ 
\alpha_{X, Y, Z}: (X \otimes Y) \otimes Z \rightarrow X \otimes (Y \otimes Z)
$$ 
for all $ X, Y, Z \in \BC $, and called associator, 
\item[$ \bullet $] unitary natural isomorphisms $ \lambda: 1 \otimes - \Rightarrow \id $ and $ \rho: - \otimes 1 \Rightarrow \id $, called left and right unitors, written $ \lambda_X: 1 \otimes X \rightarrow X $ and $ \rho_X: X \otimes 1 \rightarrow X $ for $ X \in \BC $, respectively. 
\end{bnum}
These data are supposed to satisfy the following conditions. 
\begin{bnum}
\item[$\bullet$] (Associativity constraints) For all objects $ W, X, Y, Z \in \BC $ the diagram
\begin{center}
\begin{tikzcd}[column sep = -12mm]
			&&
			(W \otimes X) \otimes (Y \otimes Z)
			\arrow[drr,"\alpha_{W, X, Y \otimes Z}"]
			&&
			\\
			((W \otimes X) \otimes Y) \otimes Z
			\arrow[urr,"\alpha_{W \otimes X, Y, Z}"]
			\arrow[dr, swap, "\alpha_{W, X, Y} \otimes \id"]
			&&&&
			W \otimes (X \otimes (Y \otimes Z))
			\\
			&
			(W \otimes (X \otimes Y)) \otimes Z
			\arrow[rr, swap, "\alpha_{W, X \otimes Y, Z}"]
			&&
			W \otimes ((X \otimes Y) \otimes Z)
			\arrow[ur, swap, "\id \otimes \alpha_{X, Y, Z}"]
			&
\end{tikzcd}
\end{center}
is commutative. 
\item[$\bullet$] (Unit constraints) $ \lambda_1 = \rho_1 $ and for all objects $ X, Y \in \BC $ the diagram 
\begin{center}
\begin{tikzcd}
			(X \otimes 1) \otimes Y
			\arrow[rr, "\alpha_{X, 1, Y}"]
			\arrow[dr, swap, "\rho_X \otimes \id"]
			&&
			X \otimes (1 \otimes Y)
			\arrow[dl, "\id \otimes \lambda_Y"]
			\\
			&
			X \otimes Y
\end{tikzcd}
\end{center}
is commutative.  
\end{bnum} 
The category $ \BC $ is called strict if $ \alpha, \lambda $ and $ \rho $ are equal to the identity. 
\end{definition} 

If $ \BC, \BD $ are $ C^* $-tensor categories then a unitary tensor functor from $ \BC $ to $ \BD $ is a linear $ * $-functor $ \Bf: \BC \rightarrow \BD $ together with a unitary natural isomorphism $ \Bf_2: \Bf(X) \otimes \Bf(Y) \rightarrow \Bf(X \otimes Y) $ and a unitary 
isomorphism $ \Bf_0: 1_\BD \rightarrow \Bf(1_\BC) $ satisfying some compatibility conditions, 
see \cite[Definition 2.1.3]{NESHVEYEV_TUSET_compactquantumgroups}. 
A unitary natural transformation $ \tau: \Bf \Rightarrow \Bg $ 
between unitary tensor functors is called \emph{monoidal} if it is compatible 
with $ \Bf_2, \Bg_2 $ and $ \Bf_0, \Bg_0 $ as in \cite[Definition 2.1.5]{NESHVEYEV_TUSET_compactquantumgroups}. We write $ \Bf \simeq \Bg $ in this case. Finally, we say that a unitary tensor functor $ \Bf: \BC \rightarrow \BD $ is a \emph{monoidal equivalence} if there exists a unitary tensor functor $ \Bg: \BD \rightarrow \BC $ and monoidal unitary natural isomorphisms $ \Bg \circ \Bf \simeq \id_\BC$ and $ \Bf \circ \Bg \simeq \id_\BD $. 

We will be mostly interested in $ C^* $-tensor categories which are \emph{rigid}. Rigidity means that every object $ X \in \BC $ admits a conjugate object $ \overline{X} $ together with morphisms $ X \otimes \overline{X} \rightarrow 1 $ and $ \overline{X} \otimes X \rightarrow 1 $ satisfying the so-called zig-zag equations, compare 
\cite[Definition 2.2.1]{NESHVEYEV_TUSET_compactquantumgroups}.

The category $ \HILB $ of Hilbert spaces is a $ C^* $-tensor category with the Hilbert space tensor product and tensor unit $ \mathbb{C} $. 
The associativity constraints are the canonical isomorphisms $ (X \otimes Y) \otimes Z \cong X \otimes (Y \otimes Z), (x \otimes y) \otimes z \mapsto x \otimes (y \otimes z) $ 
in this case, and similarly for the unit constraints. The category $ \Hilb $ of finite dimensional Hilbert spaces is rigid with 
the conjugate objects given by the conjugate Hilbert spaces. Here we note that the conjugate Hilbert space makes sense for an arbitrary Hilbert space, but the zig-zag equations only hold in the finite dimensional case. 

For our purposes, the most important example of a $ C^* $-tensor category which we have seen so far is the category $ \PERM^+(X) $ of quantum permutations of a finite set $ X $. Of course, the tensor product operation of $ \PERM^+(X) $ is given by the tensor product of quantum permutations, see Definition \ref{def_constructions}. The associativity and unit constraints are the ``obvious'' ones inherited from $ \HILB $. In the same way we obtain the rigid $ C^* $-tensor category $ \Perm^+(X) $ of all \emph{finite dimensional} quantum permutations of $ X $ as a full subcategory of $ \PERM^+(X) $.

\subsection{Discrete quantum groups} \label{sec_dqg}

A discrete group $ G $ can be described by the $ C^* $-algebra $ C_0(G) $ 
together with the nondegenerate $ * $-homomorphism $ \Delta: C_0(G) \rightarrow M(C_0(G) \otimes C_0(G)) = C_b(G \times G) $ given by $ \Delta(f)(s,t) = f(st) $ for all $ s,t \in G $. Here we recall that the $ C^* $-algebra $ C_b(G \times G) $ of all bounded functions on $ G \times G $ is the multiplier algebra of $ C_0(G) \otimes C_0(G) $, see \cite[Example 3.1.3]{MURPHY_book}.  

The associativity of the group law is equivalent to the \emph{coassociativity} condition $ (\Delta \otimes \id) \Delta = (\id \otimes \Delta) \Delta $ for the \emph{comultiplication} $ \Delta $. Here we tacitly use that nondegenerate $ * $-homomorphisms extend naturally to multiplier algebras. 
Moreover, one checks that $ \Delta(C_0(G))(1 \otimes C_0(G)) $ is a $ * $-subalgebra of $ C_0(G) \otimes C_0(G) $ which separates the points strictly, and hence is 
dense by the Stone-Weierstra{\ss} theorem. Let us recall that if $ X $ is a locally compact Hausdorff space then a $ * $-algebra $ \mathcal{A} \subset C_0(X) $ is said to separate the points of $ X $ strictly if for all $ x, y \in X $ with $ x \neq y $ there exists $ f \in \mathcal{A} $ such that $ f(x) \neq f(y) $, and for all $ x \in X $ there exists $ f \in \mathcal{A} $ with $ f(x) \neq 0 $. 
In the same way one finds that $ \Delta(C_0(G))(C_0(G) \otimes 1) \subset C_0(G) \otimes C_0(G) $ is dense. 

Next consider the nondegenerate $ * $-homomorphism $ \epsilon: C_0(G) \rightarrow \mathbb{C} $ given by evaluation at the identity element, 
so that $ \epsilon(f) = f(e) $. The fact that $ e $ is the identity of $ G $ is encoded in the \emph{counit} relation $ (\epsilon \otimes \id)\Delta = \id = (\id \otimes \epsilon)\Delta $. 

Finally, we have the \emph{Haar measure} on $ G $, which is given by counting measure. This corresponds to the \emph{weight} $ \varphi: C_0(G)_+ \rightarrow [0, \infty] $ 
given by  
$$
\varphi(f) = \sum_{s \in G} f(s). 
$$
Here we write $ C_0(G)_+ $ for the set of all positive functions $ f \geq 0 $ in $ C_0(G) $. 
Note that $ \varphi $ is \emph{faithful}, that is, $ \varphi(f) = 0 $ for $ f \geq 0 $ if and only if $ f = 0 $, and \emph{densely defined}, in the sense that the set of all $ f \in C_0(G)_+ $ with $ \varphi(f) < \infty $ is dense in $ C_0(G)_+ $. 

Discrete \emph{quantum} groups generalise the above structures and properties, by allowing us to replace the points of $ G $ by finite dimensional matrix algebras. 
In the sequel we write $ [X] $ for the closed linear span of a subset $ X $ of a Banach space.  

\begin{definition} \label{defdiscreteqg}
A discrete quantum group is given by a $ C^* $-algebra $ H $ together with a nondegenerate $ * $-homomorphism $ \Delta: H \rightarrow M(H \otimes H) $ such that 
the following conditions are satisfied.  
\begin{bnum} 
\item[a)] (Discreteness) The $ C^* $-algebra $ H = \bigoplus_{i \in I} M_{n_i}(\mathbb{C}) $ is a $ C^* $-direct sum of matrix algebras. 
\item[b)] (Coassociativity) We have $ (\Delta \otimes \id)\Delta = (\id \otimes \Delta)\Delta $. 
\item[c)] (Density conditions) We have $ [\Delta(H)(1 \otimes H)] = H \otimes H = [(H \otimes 1)\Delta(H)] $.
\item[d)] (Counit) There exists a nondegenerate $ * $-homomorphism $ \epsilon: H \rightarrow \mathbb{C} $ such that 
$$ 
(\epsilon \otimes \id)\Delta = \id = (\id \otimes \epsilon)\Delta.
$$
\item[e)] (Invariant weights) There exist densely defined faithful weights $ \varphi, \psi $ on $ H $ such that 
$$ 
(\id \otimes \varphi)\Delta(f) = \varphi(f) 1, \qquad (\psi \otimes \id)\Delta(f) = \psi(f) 1 
$$ 
for all positive elements $ f \in H $. 
\end{bnum} 
\end{definition} 

The last condition in this definition requires some explanation. By 
a \emph{densely defined faithful weight} on a $ C^* $-algebra of the form $ H = \bigoplus_{i \in I} M_{n_i}(\mathbb{C}) $ 
we mean 
a map $ \omega: H_+ \to [0, \infty] $ of the form 
$$
\omega(f) = \sum_{i \in I} \tr(Q_i^{1/2} f_i Q_i^{1/2}) 
$$
for some invertible positive matrices $ Q_i \in M_{n_i}(\mathbb{C}) $, where $ f = (f_i)_{i \in I} $ is a positive element in $ H $. Note that the set of positive elements $ f $ with $ \omega(f) < \infty $ is dense in $ H_+ $, since it contains all $ f = (f_i)_{i \in I} \in H_+ $ with only finitely many nonzero components $ f_i $, and that $ \omega(f) = 0 $ for $ f \in H_+ $ iff $ f = 0 $. These two properties correspond to $ \omega $ being \emph{densely defined} and \emph{faithful}, respectively. Such a weight extends to positive elements in the multiplier algebra in a canonical way, and the invariance conditions in $ e) $ should be interpreted as requiring 
$$
\varphi((\theta \otimes \id)\Delta(f)) = \varphi(f) \theta(1), \qquad \psi((\id \otimes \theta)\Delta(f)) = \psi(f) \theta(1) 
$$
for all positive linear functionals $ \theta $ on $ H $ and $ f \in H_+ $. 

We also note that we always work with the \emph{minimal tensor product} of $ C^* $-algebras. In the case of a discrete quantum group, the underlying $ C^* $-algebra $ H $ is automatically nuclear by condition $ a) $ in Definition \ref{defdiscreteqg}, so that there is actually no ambiguity in the choice of the tensor product.

In analogy with the case of discrete groups, we will often formally write $ H = C_0(\GG) $ and say that $ \GG $ \emph{is a discrete quantum group}. 
In fact, if all matrix blocks in $ C_0(\GG) $ are one-dimensional then one can show that there is a classical discrete group $ G $ 
such that $ C_0(\GG) = C_0(G) $ is the algebra of functions on $ G $ vanishing at infinity, with the structure maps explained at the beginning. In this sense the theory of discrete quantum groups generalises discrete groups. 

Note that the underlying algebra structure of $ C_0(\GG) $ for a discrete quantum group $ \GG $ is not particularly interesting, and the key structural properties are encoded in the coproduct. The same is true classically: namely, the underlying set of a discrete group does not carry much information. 

From a discrete quantum group $ \GG $ one obtains a $ C^* $-tensor category $ \BC_\GG $ as follows. The objects of $ \BC_\GG $ are all finite dimensional nondegenerate $ * $-representations $ \pi: C_0(\GG) \rightarrow B(\Hc_\pi) $, and morphisms are the intertwiners between them. 
The tensor product of $ \pi, \rho \in \BC_\GG $ is the tensor product $ \Hc_\pi \otimes \Hc_\rho $ of the underlying Hilbert spaces, together with $ (\pi \otimes \rho) \Delta $. That is, we compose the comultiplication with the extension to multipliers of the map 
$$ 
\pi \otimes \rho: C_0(\GG) \otimes C_0(\GG) \rightarrow B(\Hc_\pi) \otimes B(\Hc_\rho) = B(\Hc_\pi \otimes \Hc_\rho).  
$$ 
The tensor unit is $ \mathbb{C} $, viewed as a nondegenerate $ * $-representation via the counit $ \epsilon $. 
It can be shown that the category $ \BC_\GG $ is always rigid.

\subsection{Tannaka-Krein duality} 

The construction at the very end of the previous section provides 
a link between discrete quantum groups and rigid $ C^* $-tensor categories. In fact, it turns out that every rigid $ C^* $-tensor category which admits a faithful unitary tensor functor 
to the category $ \Hilb $ of finite dimensional Hilbert spaces is of the form $ \BC_\GG $ for a discrete quantum group $ \GG $. 
This is the content of the celebrated \emph{Tannaka-Krein duality} theorem which we discuss next. 

Let $ \BC $ be a rigid $ C^* $-tensor category. By a \emph{fibre functor} we mean a unitary tensor functor $ \Bf: \BC \rightarrow \Hilb $ which is faithful, that is, injective 
on morphisms. Given a discrete quantum group $ \GG $, the forgetful functor $ \iota: \BC_\GG \rightarrow \Hilb $, which assigns to a finite dimensional 
nondegenerate $ * $-representation $ \pi: C_0(\GG) \rightarrow B(\Hc_\pi) $ the underlying Hilbert space $ \Hc_\pi $, is an example of a fibre functor. 

\begin{theorem}[Tannaka-Krein] \label{TK}
Let $ \BC $ be a rigid $ C^* $-tensor category together with a fibre functor $ \Bf: \BC \rightarrow \Hilb $. Then there exists a discrete quantum group $ \GG $ 
and a monoidal equivalence $ F: \BC \rightarrow \BC_\GG $ such that $ \Bf \simeq \iota \circ F $. 
\end{theorem}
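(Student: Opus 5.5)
The plan is to build $ C_0(\GG) $ directly from the simple objects of $ \BC $. Since $ \BC $ is rigid, it is semisimple: endomorphism algebras are finite dimensional $ C^* $-algebras, and because $ \BC $ is subobject complete, every object decomposes as a finite direct sum of simple objects. Fix a set $ I $ of representatives of the isomorphism classes of simple objects and put
$$
H = \bigoplus_{i \in I} B(\Bf(U_i)),
$$
a $ C^* $-direct sum of matrix algebras. This settles the discreteness condition a) of Definition \ref{defdiscreteqg} by construction. A more invariant description is $ H = \mathrm{End}(\Bf) $, the algebra of bounded natural transformations of $ \Bf $. Indeed, a natural transformation $ \eta: \Bf \Rightarrow \Bf $ is determined by its values $ \eta_{U_i} \in B(\Bf(U_i)) $ on simple objects, with $ \sup_i \|\eta_{U_i}\| < \infty $, and naturality is automatic on direct sums.

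Next I would define the structure maps through this picture. The multiplier algebra $ M(H \otimes H) $ identifies with bounded natural endomorphisms of $ \Bf(-) \otimes \Bf(-) $ on $ \BC \times \BC $. Given $ \eta \in \mathrm{End}(\Bf) $, set
$$
\Delta(\eta)_{X,Y} = \Bf_2^{-1} \circ \eta_{X \otimes Y} \circ \Bf_2 ,
$$
where $ \Bf_2: \Bf(X) \otimes \Bf(Y) \rightarrow \Bf(X \otimes Y) $ is the unitary structure map. The counit is $ \epsilon(\eta) = \Bf_0^{-1} \eta_1 \Bf_0 $, which is a scalar because $ 1 $ is simple. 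Coassociativity and the counit identities then follow from the associativity and unit constraints in Definition \ref{deflinearcstartensor}, together with the compatibility conditions on $ (\Bf_2, \Bf_0) $. All of this is formal diagram chasing. The density conditions c) amount to surjectivity of the maps $ \Delta(H)(1 \otimes H) \rightarrow H \otimes H $ blockwise. I would prove this using rigidity: the Frobenius reciprocity isomorphism $ \BC(U_i \otimes U_j, U_k) \cong \BC(U_i, U_k \otimes \overline{U_j}) $ shows that the relevant maps are invertible on each finite block.

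The step I expect to be the main obstacle is e), the existence of invariant weights. Here I would use the intrinsic dimension theory of rigid $ C^* $-tensor categories. Choose standard solutions $ R_i, \overline{R}_i $ of the conjugate equations for each $ U_i $. These give positive invertible operators $ Q_i = \Bf(\overline{R}_i)^* \ldots $, more precisely the canonical ``$ F $-matrices'' $ Q_i \in B(\Bf(U_i)) $ with $ \tr(Q_i) = \tr(Q_i^{-1}) = d_i $, the quantum dimension. I would then define
$$
\varphi(f) = \sum_i d_i \,\mathrm{Tr}(Q_i^{-1} f_i), \qquad \psi(f) = \sum_i d_i\, \mathrm{Tr}(Q_i f_i),
$$
and check invariance block by block by decomposing $ U_i \otimes U_j $ into simples and using the trace property of the categorical (left/right) traces. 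This is the computation where sign and inverse conventions are easy to get wrong. If a direct verification becomes unwieldy, the fallback is to first construct the dual compact quantum group: take the matrix coefficient algebra $ \bigoplus_i B(\Bf(U_i))^* $ with the convolution product coming from $ \Delta $, obtain its Haar state from the projection onto the trivial isotypic component, and transport it to the discrete side.

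Finally, I would define $ F: \BC \rightarrow \BC_\GG $ on objects by $ F(X) = \Bf(X) $, with $ \pi_X(\eta) = \eta_X $, and on morphisms by $ F(T) = \Bf(T) $. This is a $ * $-representation because naturality makes $ \Bf(T) $ an intertwiner, and it is tensor compatible via $ \Bf_2 $ by the definition of $ \Delta $. The relation $ \iota \circ F = \Bf $ holds on the nose. The functor $ F $ is fully faithful because, by semisimplicity, $ \BC(X,Y) \cong \bigoplus_i \BC(X,U_i) \otimes \BC(U_i,Y) $ and both sides are computed blockwise in $ H $. It is essentially surjective because every finite dimensional representation of $ H $ is a finite direct sum of the block representations $ \pi_{U_i} $. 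A quasi-inverse, with monoidal natural isomorphisms, then exists by standard arguments.
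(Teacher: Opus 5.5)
Your outline is sound, but it takes a different route from the one the paper relies on. The paper gives no proof of Theorem~\ref{TK}. It cites \cite[Section 2.3]{NESHVEYEV_TUSET_compactquantumgroups}, where reconstruction is done on the \emph{compact} side, and then appeals to compact/discrete duality. There one forms the space $\mathbb{C}[G]$ of matrix coefficients $\eta \mapsto \omega(\eta_X)$, $\omega \in B(\Bf(X))^*$, inside the dual of $\mathrm{End}(\Bf)$, with product dual to your $\Delta$. Conjugate objects supply the antipode and the involution. Since the resulting corepresentations on the spaces $\Bf(X)$ are unitary, $\mathbb{C}[G]$ is a CQG algebra, so the Haar state (projection onto the trivial isotypic component), its positivity, and the density conditions all come from general theory. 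The discrete quantum group of Definition~\ref{defdiscreteqg}, with its invariant weights, is then obtained by duality. Your approach lands directly in Definition~\ref{defdiscreteqg} and makes $F$ tautological, with $\iota \circ F = \Bf$ on the nose. In exchange you must verify invariance of $\varphi, \psi$ by hand, through a partial categorical trace computation with standard solutions. Your fallback for e) is precisely the cited compact-side argument.

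Two steps need tightening.

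\emph{The algebra.} Bounded natural transformations of $\Bf$ form the $\ell^\infty$-product $\prod_i B(\Bf(U_i)) = M(H)$, not $H$. The algebra $H$ consists of those $\eta$ with $\|\eta_{U_i}\| \to 0$. This is harmless if you define $\Delta$ and $\epsilon$ on $\mathrm{End}(\Bf)$ and restrict. But then it is Frobenius reciprocity that gives $\Delta(H)(1 \otimes H) \subseteq H \otimes H$ and nondegeneracy of $\Delta$: for fixed $j,k$ only finitely many $i$ satisfy $U_k \prec U_i \otimes U_j$.

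\emph{The density conditions.} The map $a \otimes b \mapsto \Delta(a)(1 \otimes b)$ is not block diagonal. A block $(k,j)$ is spread over all blocks $(i,j)$ with $U_k \prec U_i \otimes U_j$, and these incidences can chain together infinitely many blocks (e.g.\ for the dual of $SU(2)$). So there are no finite blocks on which to invert, and surjectivity onto each block separately does not imply density. What does work is an antipode. Let $S(\eta)_U$ be the transpose of $\eta_{\bar U}$ with respect to the duality between $\Bf(U)$ and $\Bf(\bar U)$ defined by the solutions of the conjugate equations. Then $a \otimes b \mapsto ((\id \otimes S)\Delta(a))(1 \otimes b)$ inverts the map on the algebraic direct sum. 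It sends $B(\Bf(U_i)) \otimes B(\Bf(U_j))$ into the finitely many blocks $(k,j)$ with $U_k \prec U_i \otimes U_j$. This is where rigidity really enters c), and the other density condition is symmetric.
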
 

We shall not give a proof of Theorem \ref{TK} here and refer the reader to \cite[Section 2.3]{NESHVEYEV_TUSET_compactquantumgroups}. Let us remark, however, that 
in \cite{NESHVEYEV_TUSET_compactquantumgroups} this theorem is presented in terms of \emph{compact} quantum groups instead of discrete ones.  
This is equivalent to our formulation above, as we will briefly explain further below. 

We have chosen the language of discrete quantum groups instead of compact ones because this is more natural from the point of view of the examples we are interested in. In fact, using Theorem \ref{TK} we can immediately give the following definition.  

\begin{definition} \label{defqpermgroupfinite}
Let $ X $ be a finite set. The quantum permutation group $ \Sym^+(X) $ is the discrete quantum group associated via Tannaka-Krein reconstruction to the rigid $ C^* $-tensor category $ \Perm^+(X) $ of finite dimensional quantum permutations of $ X $ and its natural forgetful functor to $ \Hilb $. 
In the case $ X = [n] = \{1, \dots, n\} $ we also write $ \Sym^+_n $ for this quantum group. 
\end{definition}

In section \ref{sec_infinite} we will provide a more concrete description of $ \Sym^+(X) $, including formulas for the $ C^* $-algebra $ C_0(\Sym^+(X)) $ and its coproduct, in slightly greater generality.

Let us now come back to compact quantum groups and their role in Tannaka-Krein reconstruction. Compact quantum groups are actually slightly easier to define in comparison with Definition \ref{defdiscreteqg}. 

\begin{definition} \label{defcompactqg}
A compact quantum group is given by a $ C^* $-algebra $ H $ together with a nondegenerate $ * $-homomorphism $ \Delta: H \rightarrow M(H \otimes H) $ such that 
the following conditions are satisfied.  
\begin{bnum} 
\item[a)] (Compactness) The $ C^* $-algebra $ H $ is unital. 
\item[b)] (Coassociativity) We have $ (\Delta \otimes \id)\Delta = (\id \otimes \Delta)\Delta $. 
\item[c)] (Density conditions) We have $ [\Delta(H)(1 \otimes H)] = H \otimes H = [(H \otimes 1)\Delta(H)] $. 
\end{bnum} 
\end{definition} 

By condition $ a) $, the comultiplication $ \Delta $ in a compact quantum group $ H $ is a unital $ * $-homomorphism from $ H $ to $ H \otimes H $, and there is no need to consider multipliers. We have started with $ \Delta $ as a map from $ H $ to $ M(H \otimes H) $ only in order to emphasise the similarity with Definition \ref{defdiscreteqg}.  

Note that in Definition \ref{defcompactqg} there is no mention of a counit or invariant weights. It turns out that every compact quantum group automatically admits a uniquely determined left and right invariant state, called the \emph{Haar state}. There is also a counit, which however only exists as a densely defined unbounded operator in general. We refer the reader to \cite{WORONOWICZ_compactquantumgroups} for more information. 

The prototpyical example of a compact quantum group is the $ C^* $-algebra $ H = C(G) $ of functions on a compact group $ G $, with the coproduct induced by 
the group multiplication. In this case the counit, given by evaluation at the identity element, makes perfect sense as a $ * $-homomorphism from $ H $ to $ \mathbb{C} $. 

Crucially, there is a duality between compact and discrete quantum groups, generalising \emph{Pontrjagin duality} between compact and discrete abelian groups. 
That is, compact and discrete quantum groups are essentially equivalent ways of describing the same mathematical structure. This is the reason why our statement of Theorem \ref{TK} is equivalent to the formulation in \cite{NESHVEYEV_TUSET_compactquantumgroups}.

\subsection{Discretisation} 

There is another link between compact and discrete quantum groups which is relevant to our discussion. 
If $ G $ is a compact group then the underlying set of $ G $, which we shall denote by $ G_\delta $, can be viewed as a discrete group. 
Note that this discrete group will typically be ``large''; for instance, the discretisation of the circle group $ U(1) $ is uncountable. 

On the level of the function algebra $ C(G) $, we can view the passage from a compact group $ G $ to its discretisation $ G_\delta $ as passing from $ G $ to the $ C^* $-tensor category $ \Rep(C(G)) $ 
of all finite dimensional unital $ * $-representations of $ C(G) $, with the tensor product operation defined using the coproduct $ \Delta: C(G) \rightarrow C(G \times G) $ in complete analogy to the discussion at the end of section \ref{sec_dqg}.
In fact, the irreducible $ * $-representations of $ C(G) $ are all one-dimensional, and correspond precisely to point evaluations at the elements of $ G $. Moreover, the tensor product of point evaluations is given by the group law, the unit object is given by evaluation at the identity, and conjugates are obtained by taking inverses in $ G $. The $ C^* $-tensor category $ \Rep(C(G)) $ is therefore nothing but the category $ \BC_{G_\delta} $ associated to the discretisation $ G_\delta $ of $ G $. 

If $ G $ is compact and abelian then discretisation of $ G $ is closely related to \emph{Bohr compactification} of its Pontrjagin dual. More precisely, if $ \widehat{G} $ denotes the Pontrjagin dual of $ G $, then the Pontrjagin dual of the discretisation $ G_\delta $ identifies with  
the Bohr compactification of $ \widehat{G} $. That is, discretisation and Bohr compactification correspond to each other under Pontrjagin duality. 

It turns out that the concepts of discretisation and Bohr compactification carry over naturally to the realm of compact and discrete quantum groups, respectively \cite{SOLTAN_bohr}. 
For our purposes, the key point is that the discrete quantum groups we have encountered so far are in fact discretisations of compact quantum groups. More specifically, the discrete quantum group $ \Sym^+_n $ from Definition \ref{defqpermgroupfinite} is the discretisation of the compact quantum group $ S_n^+ $ introduced by Wang \cite{WANG_quantumsymmetry}. We shall not spell out the definition of $ S_n^+ $ here, suffice it to say that $ C(S_n^+) $ is the universal unital $ C^* $-algebra generated by a magic unitary $ n \times n $-matrix. 

The quantum permutation groups $ S_n^+ $ and their quantum subgroups have been studied intensively, and they have a rich combinatorial structure through their representation theory \cite{BANICA_quantumpermutationgroups}, \cite{FRESLON_compactquantumgroupscombinatorics}. 
However, for most of the constructions which we shall 
discuss in the remainder of these notes there is no companion in the world of compact quantum groups, and one really needs to work in the setting of discrete quantum groups from the outset. 

\subsection{Exercises}

\begin{exercise} \label{ex21}
Show that every finite dimensional quantum permutation of a finite set decomposes into a direct sum of irreducible quantum permutations. 
\end{exercise}

\begin{exercise} \label{ex22}
Show that every (possibly infinite dimensional) classical quantum permutation of a finite set is equivalent to a direct sum of classical permutations. 
\end{exercise}

\begin{exercise}
Let $ G $ be a monoid (equipped with the discrete topology). Moreover let $ H = C_0(G) $ and $ \Delta: C_0(G) \rightarrow M(C_0(G) \otimes C_0(G)) = C_b(G \times G) $ be given by $ \Delta(f)(s,t) = f(st) $. Show that the density conditions 
$$ 
[\Delta(H)(1 \otimes H)] = H \otimes H = [(H \otimes 1)\Delta(H)] $$
are equivalent to $ G $ having left and right cancellation. That is, for any $ s,r,t \in G $ the equality $ ts = tr $ implies $ s = r $, and similarly $ st = rt $ implies $ s = r $. 
\end{exercise}

\begin{exercise}
Show that a compact topological monoid with left and right cancellation is a group. Conclude that a compact quantum group $ H $ whose underlying $ C^* $-algebra is abelian is of the form $ H \cong C(G) $ for a compact group $ G $. 
\end{exercise}

\begin{exercise}
Let $ H $ be a commutative $ C^* $-algebra together with a nondegenerate $ * $-homomorphism $ \Delta: H \rightarrow M(H \otimes H) $ such that the first four conditions in Definition \ref{defdiscreteqg} are satisfied. Is $ H $ necessarily a discrete quantum group? 
\end{exercise}

\section{Infinite quantum symmetries} \label{sec_infinite}

In our discussion so far we have restricted our attention to finite sets. We shall now  lift the finiteness assumptions, and consider quantum permutations of arbitrary sets, following \cite{VOIGT_infinitequantumpermutations}. This will allow us to study quantum symmetries of infinite graphs, generalising the constructions in section \ref{sec_qutgraph}.

\subsection{Infinite quantum permutations} 

In order to extend Definition \ref{defqperm} to the case when $ X $ is an infinite set, the second part of the conditions needs attention. 
Recall that if $ \Hc $ is a Hilbert space then the \emph{strong operator topology} on $ B(\Hc) $ is determined by saying that a net $ (T_i)_{i \in I} $ converges to $ T $ strongly 
if $ T_i(\xi) \rightarrow T(\xi) $ in norm for all $ \xi \in \Hc $. 

\begin{definition} \label{defquantumpermutation}
Let $ X $ be a set. A quantum permutation of $ X $ is a pair $ (\Hc, u) $ consisting of a Hilbert space $ \Hc $ 
and a family $ u = (u_{x,y})_{x,y \in X} $ of elements $ u_{x,y} \in B(\Hc) $ such that 
\begin{bnum} 
\item[$\bullet$] For every $ x, y \in X $ we have $ u_{x,y}^2 = u_{x,y} = u_{x,y}^* $. 
\item[$\bullet$] We have 
$$ 
\sum_{t \in X} u_{x,t} = 1 = \sum_{t \in X} u_{t,y} 
$$
for all $ x,y \in X $, with convergence understood in the strong operator topology. 
\end{bnum}
We also say that $ u $ is a magic unitary in this case. 
If $ \sigma = (\Hc_\sigma, u^\sigma) $ and $ \tau = (\Hc_\tau, u^\tau) $ are quantum permutations of $ X $ then an intertwiner from $ \sigma $ to $ \tau $ is a 
bounded linear operator $ T: \Hc_\sigma \rightarrow \Hc_\tau $ such that $ T u^\sigma_{x,y} = u^\tau_{x,y} T $ for all $ x, y \in X $. 
\end{definition} 

It is not hard to check that the convergence of the infinite sums in Definition \ref{defquantumpermutation} can be interpreted equivalently in any of 
the weak, strong, strong*, $ \sigma $-weak, $ \sigma $-strong or $ \sigma $-strong* topologies. In contrast, asking for convergence in the norm topology is a too strong requirement if $ X $ is infinite. 

\begin{lemma} \label{orthogonality}
The projections in each row and each column of a magic unitary are pairwise orthogonal. 
\end{lemma}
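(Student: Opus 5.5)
Fix a row $x \in X$; the case of a column is identical, with the roles of the indices exchanged. The plan is to reduce to the finite statement of Exercise \ref{exI1} by a positivity argument that tolerates strong convergence. Let $y \neq z$ in $X$ and write $p = u_{x,y}$, $q = u_{x,z}$. Since all $u_{x,t}$ are projections, hence positive, the finite partial sums $\sum_{t \in F} u_{x,t}$ over finite $F \subset X$ increase to $1$ in the strong operator topology. In particular, for any finite $F$ containing $y,z$, we have $p + q \leq \sum_{t \in F} u_{x,t} \leq 1$ as operators.

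From $p + q \leq 1$ we extract orthogonality. Compress by $p$:
$$
p + pqp = p(p+q)p \leq p \cdot 1 \cdot p = p .
$$
Hence $pqp \leq 0$. On the other hand $pqp = (qp)^*(qp) \geq 0$, so $pqp = 0$. This gives $\|qp\|^2 = \|(qp)^*qp\| = \|pqp\| = 0$, so $qp = 0$, and taking adjoints also $pq = 0$.

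The only point requiring care is the passage from the strongly convergent infinite sum to the operator inequality $p + q \leq 1$. I would justify this directly. For $\xi \in \Hc$, the quantity $\bra \xi, \sum_{t \in F} u_{x,t} \xi \ket$ is a sum of nonnegative terms $\bra \xi, u_{x,t}\xi\ket$. Strong convergence gives convergence of this net to $\|\xi\|^2$. The net is increasing in $F$, so every finite partial sum is bounded by $\|\xi\|^2$; applying this to $F = \{y,z\}$ yields $p+q \leq 1$. Beyond this, the argument is the same as in the finite case.
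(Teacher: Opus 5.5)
Your proof is correct and rests on the same idea as the paper's: compress by one projection and use positivity of $pqp = (qp)^*(qp)$. The only difference is how the infinite sum is handled. You first extract the finite operator inequality $p+q\le 1$ from monotonicity of the partial sums, which needs only weak convergence. The paper instead compresses the whole strongly convergent sum by $p_y$, using separate strong continuity of multiplication, and reads off $\sum_{x\neq y}\|p_x p_y\xi\|^2 = 0$.
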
  

\begin{proof} 
Fix a set $ X $. It suffices to show that if $ \sum_{x \in X} p_x = 1 $ is a strongly convergent sum of projections in $ B(\Hc) $ for some Hilbert space $ \Hc $ then $ p_x p_y = 0 $ for $ x \neq y $. 
To this end let $ \xi \in \Hc $ and $ y \in X $, and note that $ p_y \xi = p_y 1 p_y \xi = \sum_{x \in X} p_y p_x p_y \xi $ since multiplication is separately strongly continuous. 
It follows that $ \sum_{x \neq y} p_y p_x p_y \xi = 0 $, and hence  
$$
0 = \bra \xi, \sum_{x \neq y} p_y p_x p_y \xi \ket = \sum_{x \neq y} \bra p_x p_y \xi, p_x p_y \xi \ket. 
$$ 
All terms on the right hand side of this equality are nonnegative, and therefore $ p_x p_y \xi = 0 $ for all $ x \neq y $. Since $ \xi $ was arbitrary 
it follows that $ p_x p_y = 0 $ as required.
\end{proof} 

In the sequel, all infinite sums of families $ (p_i)_{i \in I} $ of pairwise orthogonal projections in a Hilbert space will be understood in the strong operator topology, 
and if $ \sum_{i \in I} p_i = 1 $ then we call $ (p_i)_{i \in I} $ a \emph{partition of unity}. We can rephrase Definition \ref{defquantumpermutation} 
by saying that a quantum permutation of a set $ X $ is a matrix of projections, indexed by $ X $, such that all rows and columns form partitions of unity. 

One defines direct sums, tensor products, and conjugates of quantum permutations of infinite sets in the same way as in the finite case, see Definition \ref{def_constructions}. 
Moreover, using the same arguments as in the proof of Lemma \ref{tensorstructure} one checks that these operations yield again quantum permutations. 

\begin{definition} 
Let $ X $ be a set. We denote by $ \PERM^+(X) $ the $ C^* $-tensor category of all quantum permutations of $ X $ and their intertwiners. Similarly, we write $ \Perm^+(X) $ for the $ C^* $-tensor category of all finite dimensional quantum permutations. 
\end{definition} 

In the same way as for finite sets, the $ C^* $-tensor category $ \Perm^+(X) $ is rigid. The Tannaka-Krein reconstruction theorem \ref{TK} 
therefore allows us to give the following definition. 

\begin{definition}
Let $ X $ be a set. The quantum permutation group $ \Sym^+(X) $ is the discrete quantum group associated to the rigid $ C^* $-tensor category 
of $ \Perm^+(X) $ of finite dimensional quantum permutations of $ X $ and the canonical forgetful functor to $ \Hilb $ via Tannaka-Krein reconstruction. 
\end{definition}

Concretely, the $ C^* $-algebra of functions on $ \Sym^+(X) $ can be written as the $ C^* $-direct sum of matrix algebras
$$
C_0(\Sym^+(X)) = \bigoplus_{\sigma} B(\Hc_\sigma), 
$$
taken over the set of isomorphism classes of finite dimensional irreducible quantum permutations $ \sigma = (\Hc_\sigma, u^\sigma) $ of $ X $. 

In order to describe the quantum group structure of $ C_0(\Sym^+(X)) $, it is convenient to work with the \emph{universal quantum permutation} of $ X $, by which 
we mean the family $ u = (u_{x,y})_{x,y \in X} $ of elements $ u_{x,y} \in M(C_0(\Sym^+(X))) $ with components $ u^\sigma_{x,y} $. 
The coproduct of $ C_0(\Sym^+(X)) $ is the uniquely determined nondegenerate $ * $-homomorphism $ \Delta: C_0(\Sym^+(X)) \rightarrow M(C_0(\Sym^+(X)) \otimes C_0(\Sym^+(X))) $ satisfying 
$$
\Delta(u_{x,y}) = \sum_{t \in X} u_{x,t} \otimes u_{t,y} 
$$
for all $ x,y \in X $. 
The counit $ \epsilon: C_0(\Sym^+(X)) \rightarrow \mathbb{C} $ is the unique (nondegenerate) $ * $-homomorphism satisfying $ \epsilon(u_{x,y}) = \delta_{x,y} $ for $ x,y \in X $. 
Moreover, one can show that a left and right invariant weight $ \varphi $ for $ C_0(\Sym^+(X)) $ is given by  
$$
\varphi(f) = \sum_{\sigma} \dim(\Hc_\sigma) \tr(f_\sigma), 
$$
where $ f = (f_\sigma) $ is contained in the positive part of $ C_0(\Sym^+(X)) $. Here $ \tr $ denotes the trace on $ B(\Hc_\sigma) $, normalised such that $ \tr(1) = \dim(\Hc_\sigma) $ for the identity matrix $ 1 \in B(\Hc_\sigma) $.

\subsection{Universality} 

We shall now explain how quantum permutations can be viewed as \emph{universal quantum symmetries}. 

\begin{definition} \label{defaction}
An action of a discrete quantum group $ \GG $ on a $ C^* $-algebra $ B $ is an injective 
nondegenerate $ * $-homomorphism $ \beta: B \rightarrow M(C_0(\GG) \otimes B) $ such that 
$$ 
(\Delta \otimes \id) \beta = (\id \otimes \beta) \beta 
$$ 
and $ [\beta(B)(C_0(\GG) \otimes 1)] = C_0(\GG) \otimes B $. 
\end{definition} 

Note here that the density condition $ [\beta(B)(C_0(\GG) \otimes 1)] = C_0(\GG) \otimes B $ encodes two requirements: firstly, the linear span of elements $ \beta(b)(f \otimes 1) $ for $ b \in B, f \in C_0(\GG) $ is contained in $ C_0(\GG) \otimes B $, not just $ M(C_0(\GG) \otimes B) $, and secondly, this linear span is dense in there. 

The most basic example of an action, for any discrete quantum group $ \GG $, is obtained by equipping an arbitrary $ C^* $-algebra $ B $ with the \emph{trivial action}, given by $ \beta(b) = 1 \otimes b $ 
for all $ b \in B $. Another example is the \emph{translation action} of $ \GG $ on $ B = C_0(\GG) $, 
defined by $ \beta = \Delta: C_0(\GG) \rightarrow M(C_0(\GG) \otimes C_0(\GG)) $. It follows directly from Definition \ref{defdiscreteqg} 
that this defines indeed an action of $ \GG $ on $ C_0(\GG) $.  

We are mainly interested in the case $ B = C_0(X) $ 
for a set $ X $, viewed as a discrete topological space. Recall from 
the discussion after Definition \ref{defdiscreteqg} the notion of a densely defined faithful weight on a $ C^* $-algebra of this form. 

\begin{definition} \label{definvariantweight}
Let $ B = C_0(X) $ for a set $ X $ and let $ \beta: B \rightarrow M(C_0(\GG) \otimes B) $ be an action of a discrete quantum group $ \GG $ on $ B $. A densely defined faithful weight $ \omega $ on $ B $ 
is called invariant with respect to $ \beta $ if $ \omega((\theta \otimes \id)\beta(b)) = \omega(b) \theta(1) $ for all $ b \in B_+ $ and all positive linear functionals $ \theta $ on $ C_0(\GG) $. 
\end{definition}

We shall say that a discrete quantum group $ \GG $ is the \emph{discrete quantum symmetry group} of $ (B, \omega) $ if there exists an 
action $ \beta: B \rightarrow M(C_0(\GG) \otimes B) $ such that $ \omega $ is invariant with respect to $ \beta $ and the following universal property is satisfied. 
If $ \HH $ is an arbitrary discrete quantum group together with an action $ \gamma: B \rightarrow M(C_0(\HH) \otimes B) $ 
such that $ \omega $ is invariant with respect to $ \gamma $, then there exists a unique morphism of discrete quantum groups $ \iota: \HH \rightarrow \GG $ such that the diagram 
$$
\xymatrix{
B \ar@{->}[r]^{\!\!\!\!\!\!\!\!\!\!\!\!\!\!\!\!\!\!\!\!\!\! \beta} \ar@{->}[rd]_{\gamma} & M(C_0(\GG) \otimes B) \ar@{->}[d]^{\iota^* \otimes \id} \\ 
& M(C_0(\HH) \otimes B) 
}
$$
is commutative. Here, by definition, a morphism $ \iota: \HH \rightarrow \GG $ of discrete quantum groups is a nondegenerate $ * $-homomorphism $ \iota^*: C_0(\GG) \rightarrow M(C_0(\HH)) $ which is compatible with the coproducts in the sense that $ \Delta \circ \iota^* = (\iota^* \otimes \iota^*) \circ \Delta $. 

It is straightforward to check that the discrete quantum symmetry group of a $ C^* $-algebra $ B $ is uniquely determined up to isomorphism. 
If $ X $ is a set and $ B = C_0(X) $ is equipped with the weight induced by counting measure, we shall call the corresponding discrete quantum symmetry group 
the \emph{universal discrete quantum group acting on $ X $}. The following result shows that this quantum group indeed exists, and provides at the same time an alternative characterisation of $ \Sym^+(X) $, compare \cite[Proposition 4.6]{VOIGT_infinitequantumpermutations}. 

\begin{prop} \label{quantumsymmetryuniversal}
Let $ X $ be a set. The quantum permutation group $ \Sym^+(X) $ is the universal discrete quantum group acting on $ X $. 
\end{prop}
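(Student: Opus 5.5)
The plan is to first write down the canonical action of $ \Sym^+(X) $ on $ C_0(X) $ and check that it is an action for which counting measure is invariant. Then I would show that any invariant action of a discrete quantum group on $ C_0(X) $ is given by a magic unitary, and that magic unitaries over $ \HH $ correspond to morphisms $ \HH \rightarrow \Sym^+(X) $.

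\emph{The action.} Write $ \delta_y \in C_0(X) $ for the indicator function of $ y \in X $ and let $ u = (u_{x,y}) $ be the universal quantum permutation. Define
$$ \beta(\delta_y) = \sum_{x \in X} u_{y,x} \otimes \delta_x , $$
with strict convergence in $ M(C_0(\Sym^+(X)) \otimes C_0(X)) $; the index convention is chosen so that coassociativity matches $ \Delta(u_{x,y}) = \sum_t u_{x,t} \otimes u_{t,y} $.
\begin{bnum}
\item[$\bullet$] Because the $ u_{y,x} $ for fixed $ x $ are pairwise orthogonal projections (Lemma \ref{orthogonality}), $ \beta $ extends to a $ * $-homomorphism on $ C_0(X) $.
\item[$\bullet$] It is injective, since applying $ \epsilon \otimes \id $ recovers $ b $.
\item[$\bullet$] Nondegeneracy follows from $ \sum_y u_{y,x} = 1 $.
\item[$\bullet$] Coassociativity is immediate from the formula for $ \Delta $.
\item[$\bullet$] For the density condition, note $ \beta(\delta_y)(f \otimes 1) = \sum_x u_{y,x} f \otimes \delta_x $. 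For $ f $ supported in a single block $ B(\Hc_\sigma) $, only finitely many $ u^\sigma_{y,x} $ are nonzero, since $ \Hc_\sigma $ is finite dimensional. Summing over $ y $ then gives $ f \otimes \delta_x $, so the span is dense.
\item[$\bullet$] Invariance of counting measure $ \omega $ means $ \omega((\theta \otimes \id)\beta(\delta_y)) = \sum_x \theta(u_{y,x}) = \theta(1) $, which holds by the row relation.
\end{bnum}

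\emph{Universality.} Let $ \gamma $ be an action of $ \HH $ on $ C_0(X) $ with $ \omega $ invariant. Write $ \gamma(\delta_y) = \sum_x v_{y,x} \otimes \delta_x $ with $ v_{y,x} \in M(C_0(\HH)) $; this is possible because $ M(C_0(\HH) \otimes C_0(X)) $ is the $ \ell^\infty $-product of copies of $ M(C_0(\HH)) $ indexed by $ X $.
\begin{bnum}
\item[$\bullet$] Since $ \gamma $ is a $ * $-homomorphism, each $ v_{y,x} $ is a projection, and for fixed $ x $ they are orthogonal in $ y $.
\item[$\bullet$] Nondegeneracy of $ \gamma $ gives $ \sum_y v_{y,x} = 1 $ strictly.
\item[$\bullet$] The other sum $ \sum_x v_{y,x} = 1 $ comes from invariance: $ \sum_x \theta(v_{y,x}) = \theta(1) $ for all positive $ \theta $. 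Applied to finite subsums, this gives $ \sum_x v_{y,x} = 1 $ strongly in every representation, hence strictly, because the $ v_{y,x} $ for fixed $ y $ are orthogonal; this follows from $ v_{y,x}v_{y,x'} $ being the coefficient of $ \gamma(\delta_y)^2 $ at different points, which vanishes.
\end{bnum}
So $ v $ is a magic unitary with entries in $ M(C_0(\HH)) $. Restricting to each block $ M_{n_i}(\mathbb{C}) $ of $ C_0(\HH) $, i.e.\ to each irreducible representation $ \pi_i $ of $ \HH $, gives finite dimensional quantum permutations $ \pi_i(v) $ of $ X $.

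Coassociativity of $ \gamma $ gives $ \Delta_\HH(v_{y,x}) = \sum_t v_{y,t} \otimes v_{t,x} $. This says that the assignment $ \pi \mapsto \pi(v) $ is a unitary tensor functor $ \BC_\HH \rightarrow \Perm^+(X) $, compatible with the forgetful functors to $ \Hilb $. By the functoriality in Tannaka--Krein duality (Theorem \ref{TK}), such a functor corresponds to a unique morphism $ \iota^*: C_0(\Sym^+(X)) \rightarrow M(C_0(\HH)) $ with $ \iota^*(u_{y,x}) = v_{y,x} $. Concretely, $ \iota^* $ maps the block $ B(\Hc_\sigma) $ to those representations of $ \HH $ whose decomposition of $ \pi(v) $ contains $ \sigma $. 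This identity is exactly $ (\iota^* \otimes \id)\beta = \gamma $.

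Uniqueness holds because $ \iota^* $ is determined on the $ u_{y,x} $, and these generate $ C_0(\Sym^+(X)) $ in the strict sense: the blocks are separated by the irreducibles of $ \Perm^+(X) $, which are generated by the $ u^\sigma_{x,y} $.

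The main obstacle is the step from a magic unitary over $ \HH $ to a morphism of discrete quantum groups. This needs the functorial form of Tannaka--Krein duality, or an explicit check that the induced map on blocks is a nondegenerate $ * $-homomorphism intertwining coproducts. The row-sum relation obtained from invariance is the second delicate point, since without invariance only the column sums follow.
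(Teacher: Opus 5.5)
Your proposal follows the paper's proof. It uses the same canonical action: your $\beta(\delta_y)=\sum_x u_{y,x}\otimes\delta_x$ is the paper's convention with the indices renamed. It extracts $v$ from $\gamma$ in the same way, with column sums coming from nondegeneracy and row sums from invariance of $\omega$. It also defines $\iota^*$ block by block, using that $C_0(\HH)$ is a direct sum of matrix algebras. Your ``concretely'' sentence is what the paper actually does, so you do not need a functorial version of Theorem~\ref{TK}, which the paper does not state.

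\textbf{Error: row orthogonality.} Multiplicativity of $\gamma$ gives no cross terms along a row. Since $\delta_x\delta_{x'}=0$ for $x\neq x'$, one has $\gamma(\delta_y)^2=\sum_x v_{y,x}^2\otimes\delta_x$. This only shows that each $v_{y,x}$ is a projection, while $\gamma(\delta_y)\gamma(\delta_{y'})=0$ gives orthogonality for fixed $x$. Orthogonality of the $v_{y,x}$ for fixed $y$ is therefore not available at that point.

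\textbf{Fix.} You do not need that orthogonality. For finite $F\subset X$, the partial sums $S_F=\sum_{x\in F}v_{y,x}$ satisfy $\theta(S_F)\leq\theta(1)$ for all positive $\theta$, because all terms are nonnegative. Hence $0\leq S_F\leq 1$, the net $(S_F)$ is increasing, and $\theta(S_F)\to\theta(1)$. In each matrix block this gives $\|(1-S_F)^{1/2}\xi\|\to 0$, hence $S_F\to 1$. The bound $\|S_F\|\leq 1$ then upgrades this to strict convergence in $M(C_0(\HH))=\prod_i M_{n_i}(\mathbb{C})$. Row orthogonality is then a consequence of the row relation, via Lemma~\ref{orthogonality}, rather than an input to it.

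\textbf{Minor slip in the density check.} Summing $\beta(\delta_y)(f\otimes 1)$ over $y$ gives $f\otimes 1$, not $f\otimes\delta_x$. Instead use $\beta(\delta_y)(u_{y,x}f\otimes 1)=u_{y,x}f\otimes\delta_x$, which relies on row orthogonality of the universal magic unitary. Then sum over the finitely many $y$ with $u_{y,x}f\neq 0$, for $f$ supported in a single block.
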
 

\begin{proof} 
Recall that the universal quantum permutation of $ X $ is $ u = (u_{x,y})_{x,y \in X} $ with $ u_{x,y} \in M(C_0(\Sym^+(X))) $.
If we write $ e_x \in C_0(X) $ for the characteristic function based at $ x \in X $ then the formula 
$$ 
\beta(e_x) = \sum_{y \in X} u_{x,y} \otimes e_y 
$$ 
determines a nondegenerate $ * $-homomorphism  $ \beta $ from $ C_0(X) $ to $ M(C_0(\Sym^+(X)) \otimes C_0(X)) $, and it is straightforward to check that this yields an action of $ \Sym^+(X) $ on $ B = C_0(X) $. Moreover, the weight $ \omega $ on $ C_0(X) $ corresponding to counting measure is invariant with respect to this action. 

Assume that $ \gamma: B \rightarrow M(C_0(\HH) \otimes B) $ is an action of a discrete quantum group $ \HH $ on $ B $ such that $ \omega $ is invariant with respect to $ \gamma $. Firstly, note that we can 
write $ \gamma(e_x) = \sum_{y \in X} v_{x,y} \otimes e_y $ for uniquely determined elements $ v_{x,y} \in M(C_0(\HH)) $. We claim that the elements $ v_{x,y} $, 
or rather their components in each matrix block inside $ C_0(\HH) $, satisfy the defining properties for a quantum permutation. Indeed, note that 
from $ e_x^2 = e_x = e_x^* $ we get $ v_{x,y}^2 = v_{x,y} = v_{x,y}^* $ for all $ x,y \in X $. 
Since $ \gamma $ is nondegenerate we have $ \sum_{x \in X} v_{x,y} = 1 $ for all $ y \in X $. 
Moreover, we obtain $ \sum_{y \in X} v_{x,y} = 1 $ for all $ x \in X $ since the weight $ \omega $ is invariant with respect to $ \gamma $. 

Since $ C_0(\HH) $ is a direct sum of matrix algebras this allows us to define a nondegenerate $ * $-homomorphism $ \iota^*: C_0(\Sym^+(X)) \rightarrow M(C_0(\HH)) $ 
such that $ \iota^*(u_{x,y}) = v_{x,y} $ for all $ x, y \in X $. It is straightforward to check that $ \iota^* $ is compatible with the comultiplications 
and satisfies $ (\iota^* \otimes \id) \beta = \gamma $. Moreover $ \iota^* $ is uniquely determined by these properties. 
\end{proof}

\subsection{Quantum automorphisms of infinite graphs} 

Let $ X = (V_X, E_X) $ be a graph. Recall that we denote by $ V_X $ and $ E_X $ the set of vertices and edges of $ X $, respectively, and we require that $ E_X $ is a symmetric subset  of $ V_X \times V_X $ which has empty intersection with the diagonal. 

Extending the discussion in section \ref{sec_introduction}, we will now define and study quantum automorphisms in the case that $ X $ is  \emph{infinite}, in the sense that both $ V_X $ and $ E_X \subset V_X \times X_X $ can have arbitrary cardinality. 
As for finite graphs, the \emph{adjacency matrix} of an infinite graph $ X $ is the matrix $ A_X \in M_{V_X}(\{0,1\}) $ determined by 
$$
(A_X)_{x,y} = 1 \Leftrightarrow (x,y) \in E_X
$$
for $ x,y \in V_X $. 

\begin{definition} \label{defquantumautomorphismgraph}
Let $ X = (V_X, E_X) $ be a graph. A quantum automorphism of $ X $ is a quantum permutation $ (\Hc, u) $ of $ V_X $ such that 
$$ 
A_X u = u A_X 
$$ 
as matrices in $ M_{V_X}(B(\Hc)) $, where $ A_X $ is the adjacency matrix of $ X $. 
\end{definition} 

We note that the equality in Definition \ref{defquantumautomorphismgraph} makes indeed sense since the matrix entries of the product of $ u $ and $ A_X $ on either side are strongly convergent sums of pairwise orthogonal projections. 

It is straightforward to check that the collection of all quantum automorphisms of $ X $ is closed under taking direct sums, tensor products and conjugates. 
This means that we obtain naturally a $ C^* $-tensor category in this situation. Moreover, finite dimensional quantum automorphisms give rise to an associated discrete quantum group $ \Qut_\delta(X) $ by the Tannaka-Krein theorem \ref{TK}. 

\begin{definition} \label{defqutinfinitegraph}
Let $ X $ be a graph. We shall call the quantum group $ \Qut_\delta(X) $ the discrete quantum automorphism group of $ X $. 
\end{definition} 

For finite graphs, the definition of a compact quantum automorphism group was given by Banica in \cite{BANICA_quthomogeneousgraphs}, modifying an earlier definition by Bichon \cite{BICHON_qutgraphs}. In contrast, the quantum group $ \Qut_\delta(X) $ in Definition \ref{defqutinfinitegraph} is discrete, regardless of whether $ X $ is finite or not. 
It is not hard to show that if $ X $ is a finite graph then 
$$ 
\Qut_\delta(X) = \Qut(X)_\delta 
$$ 
is equal to the discretisation of Banica's quantum automorphism group $ \Qut(X) $ of $ X $. 
In particular, the quantum groups $ \Qut(X) $ and $ \Qut_\delta(X) $ are \emph{not} isomorphic in general. 

Let us mention that another, very elegant approach to quantum symmetries of infinite graphs has been developed by Rollier and Vaes in \cite{ROLLIER_VAES_quantumautomorphismgroups}. 
Their constructions are directly motivated by the work of Man\v{c}inska-Roberson \cite{MANCINSKA_ROBERSON_quantumisoplanar}. 
Whereas Definition \ref{defqutinfinitegraph} works for arbitrary graphs, the theory in \cite{ROLLIER_VAES_quantumautomorphismgroups} only applies to locally finite connected graphs.
At the same time, in the cases where \cite{ROLLIER_VAES_quantumautomorphismgroups} is applicable, it yields a more refined invariant, from which the quantum automorphism group in the sense of Definition \ref{defqutinfinitegraph} can be obtained via discretisation.

\subsection{Examples} 

Let us consider some examples of graphs and their quantum symmetries, mostly following \cite{VOIGT_infinitequantumpermutations}. 

\subsubsection{Disjoint automorphisms} 

Two automorphisms $ \sigma, \tau \in \Aut(X) $ of a graph $ X $ are called \emph{disjoint} if $ \sigma(x) \neq x \implies \tau(x) = x $, or equivalently, $ \tau(x) \neq x \implies \sigma(x) = x $ 
for all $ x \in V_X $. The existence of a pair of disjoint automorphisms is sufficient for a graph to have quantum symmetry, by the following 
result due to Schmidt \cite[Theorem 2.2]{SCHMIDT_foldedcube}, see also \cite[Proposition 7.7]{VOIGT_infinitequantumpermutations}. 

\begin{prop} \label{disjointautomorphisms}
Let $ X $ be a graph admitting a pair of disjoint automorphisms $ \sigma, \tau \in \Aut(X) $, and assume that $ k \in \mathbb{N} $ does not exceed the order
of neither of these automorphisms. Then $ X $ admits an irreducible quantum automorphism of dimension $ k $. In particular, if $ \Aut(X) $ contains a pair of nontrivial 
disjoint automorphisms then $ X $ has quantum symmetry. 
\end{prop}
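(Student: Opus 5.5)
The idea is to build a quantum automorphism out of two non-commuting families of projections, one attached to $\sigma$ and one to $\tau$. Disjointness guarantees the two families act on disjoint sets of vertices, so the magic unitary conditions can be checked one vertex at a time.

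\textbf{Choosing the projections.} Write $m,n$ for the orders of $\sigma,\tau$, possibly infinite, with $k\le m$ and $k\le n$. We look for two partitions of unity in $M_k(\mathbb{C})$,
$$
(p_i)_{i\in\mathbb{Z}/m\mathbb{Z}}, \qquad (q_j)_{j\in\mathbb{Z}/n\mathbb{Z}},
$$
indexed by the powers of $\sigma$ and $\tau$. If an order is infinite, we index by $\mathbb{Z}$ instead and take all but finitely many projections to be zero. We want these to generate all of $M_k(\mathbb{C})$ as an algebra. A concrete choice is the following: let $p_0,\dots,p_{k-1}$ be the rank one projections onto the standard basis vectors, and let $q_0,\dots,q_{k-1}$ be the rank one projections onto the Fourier basis vectors. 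All remaining $p_i$ and $q_j$ are zero; there is room for this precisely because $k\le m$ and $k\le n$. The diagonal matrices together with the Fourier-basis projections generate $M_k(\mathbb{C})$ (they give the clock and shift matrices), so the commutant is $\mathbb{C}1$.

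\textbf{The candidate.} Define
$$
u=\sum_i p_i\otimes u^{\sigma^i}\;+\;\sum_j q_j\otimes u^{\tau^j}\;-\;1\otimes u^{\id},
$$
where $u^\rho$ denotes the permutation matrix of $\rho$ and we regard $u$ as an element of $M_{V_X}(M_k(\mathbb{C}))$. Entrywise this reads
$$
u_{x,y}=\sum_{i:\,\sigma^i(y)=x}p_i+\sum_{j:\,\tau^j(y)=x}q_j-\delta_{x,y}1 .
$$

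\textbf{Checking the conditions.} Let $S$ be the set of vertices moved by $\sigma$ and $T$ the set moved by $\tau$. Disjointness says $S\cap T=\emptyset$, and both sets are invariant under the respective automorphism. Fix $y$ and consider three cases.
\begin{bnum}
\item[$\bullet$] If $y\in S$, then every $\tau^j$ fixes $y$, so the $q$-sum and the $-\delta$ term cancel. What remains is $u_{x,y}=\sum_{i:\sigma^i(y)=x}p_i$, a sum of some of the $p_i$, and these sums over $x$ partition $\{p_i\}$.
\item[$\bullet$] If $y\in T$, the same argument applies with the roles of $\sigma,p$ and $\tau,q$ exchanged.
\item[$\bullet$] If $y$ is fixed by both, then $u_{x,y}=\delta_{x,y}(1+1-1)=\delta_{x,y}1$.
\end{bnum}
So every entry is a projection (a sum of orthogonal projections), and each column is a partition of unity. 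The rows are handled symmetrically using the inverse powers. For commutation with $A_X$, each $u^{\rho}$ commutes with $A_X$ because $\rho$ is a graph automorphism, and $u$ is a linear combination of terms of the form $(\text{scalar matrix})\otimes u^\rho$. Hence $A_Xu=uA_X$; all sums converge strongly since only finitely many $p_i,q_j$ are nonzero.

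\textbf{Irreducibility, the main obstacle.} We have to make sure that $\{p_i\}$ and $\{q_j\}$ are actually recovered from the entries $u_{x,y}$. This is where we need the orders $m,n\ge k$ and a careful choice of vertices.
\begin{bnum}
\item[$\bullet$] For $\sigma$, pick $y\in S$ in an orbit of length at least $k$. Then the points $\sigma^0(y),\dots,\sigma^{k-1}(y)$ are distinct, so $u_{\sigma^i(y),y}=p_i$ for $i<k$.
\item[$\bullet$] Such an orbit may fail to exist: it is only the lcm of the orbit lengths that equals the order. In that case I would refine the choice of projections. Replace the index $i$ by its image in $\prod_{\text{orbits}}\mathbb{Z}/\ell$, and choose the $p_i$ so that the projections $\sum_{i\equiv r\ (\ell)}p_i$ obtained from the individual orbits still generate a maximal abelian subalgebra. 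Alternatively, take $p$ to be the spectral projections of a generic diagonal unitary $D$ with $D^m=1$, distributing the $k$ eigenvalues among $m$-th roots of unity so that they remain distinct on $\langle\sigma\rangle$.
\item[$\bullet$] The same is done for $\tau$ with the $q_j$.
\end{bnum}
Once the entries of $u$ are known to contain the full families $\{p_i\}$ and $\{q_j\}$, the commutant of all $u_{x,y}$ equals the commutant of $\{p_i,q_j\}$, which is $\mathbb{C}1$ by the choice made in the first step. This gives an irreducible quantum automorphism of dimension $k$.

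The final claim then follows by taking $k=2$: any nontrivial automorphism has order at least $2$, so a pair of nontrivial disjoint automorphisms yields an irreducible quantum automorphism of dimension $2$, and $X$ has quantum symmetry.
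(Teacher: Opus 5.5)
Your construction of $u$ and your checks of the magic unitary and adjacency conditions agree with the paper's proof. The paper uses the minimal projections of $C(\mathbb{Z}/k\mathbb{Z})$ and $C^*(\mathbb{Z}/k\mathbb{Z})$ on $l^2(\mathbb{Z}/k\mathbb{Z})$, which are exactly your standard and Fourier bases. Your irreducibility argument is also fine whenever $\sigma$ has an orbit of length at least $k$, and this covers the infinite-order case.

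The gap is the remaining case: $\ord(\sigma)=m$ is finite but every orbit is shorter than $k$. For example, $\sigma$ could be a $2$-cycle times a disjoint $3$-cycle, with $m=6$ and $k=4$. Here you only sketch two possible ``refinements'' of the projections and carry out neither. Changing the projections cannot help by itself. The entries $u_{x,y}$ with $y$ in a single orbit of length $\ell$ are the projections $\sum_{i\equiv r\ (\ell)}p_i$, $r\in\mathbb{Z}/\ell\mathbb{Z}$. These span an abelian algebra of dimension at most $\ell<k$, whatever the $p_i$ are. So no single orbit can return all $k$ of the $p_i$; several orbits must be combined, and you do not say how.

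The missing step is to use products of entries. Your original choice of projections needs no modification, because the commutant of the entries equals the commutant of the algebra they generate. All orbit lengths divide $m$ and their least common multiple is $m$. So you can pick finitely many vertices $v_1,\dots,v_a$ moved by $\sigma$ (hence fixed by $\tau$) whose orbit lengths already have least common multiple $m$. Since the $p_i$ are mutually orthogonal, for every $i\in\mathbb{Z}/m\mathbb{Z}$ we get
\[
u_{\sigma^i(v_1),v_1}\cdots u_{\sigma^i(v_a),v_a}=\sum_{i'\,:\,\sigma^{i'-i}(v_j)=v_j\ \text{for all } j}p_{i'}=p_i .
\]
The last equality holds because $\sigma^t$ fixes all the $v_j$ only if $t\equiv 0 \bmod m$. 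The same argument works for the $q_j$, and this is exactly how the paper argues. In particular, the condition in your first alternative (that the projections from the individual orbits generate a maximal abelian subalgebra) already holds for your original choice; this product computation is the proof of it.
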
 

\begin{proof} 
Let us fix $ k \in \mathbb{N} $ such that $ k \leq \min(\ord(\sigma), \ord(\tau)) $, and note that if $ \ord(\sigma) = \infty = \ord(\tau) $ this means that we can 
choose $ k $ arbitrarily. We shall construct an irreducible quantum automorphism $ \rho = (\mathbb{C}^k, u^\rho) $ as follows. 

Let $ G = \mathbb{Z}/k\mathbb{Z} $ and consider the actions of $ C(G) $ and $ C^*(G) $ on $ \mathbb{C}^k = l^2(G) $, induced by pointwise multiplication of functions in $ C(G) $, 
and the regular representation of $ G $, respectively. We shall write $ p_1, \dots p_k $ and $ q_1 \dots, q_k $ for the images in $ M_k(\mathbb{C}) = B(l^2(G)) $ of the minimal 
projections in $ C(G) $ and $ C^*(G) $ under these representations, respectively. 
 
Define $ u^\rho = (u^\rho_{x,y})_{x,y \in V_X} $ by $ u^\rho = \sum_{r = 1}^k u^{\sigma^r} p_r + \sum_{s = 1}^k u^{\tau^s} q_s - u^{\id} 1 $. That 
is, the matrix $ u^\rho_{x,y} \in M_k(\mathbb{C}) $ is given by 
$$ 
u^\rho_{x,y} = \sum_{r = 1}^k \delta_{x,\sigma^r(y)} p_r + \sum_{s = 1}^k \delta_{x,\tau^s(y)} q_s - \delta_{x,y} 1
$$ 
for $ x, y \in V_X $. 
Since $ \sigma $ and $ \tau $ are graph automorphisms we clearly have $ u^\rho A_X = A_X u^\rho $. Let 
\begin{align*}
M_{xy} &= \{1 \leq r \leq k \mid \sigma^r(y) = x \}, \\
N_{xy} &= \{1 \leq s \leq k \mid \tau^s(y) = x \}, 
\end{align*}
and observe that 
$$ 
u^\rho_{x,y} = 
\begin{cases} 
\sum_{r \in M_{xy}} p_r & \text{ if } \sigma(y) \neq y  \\
\sum_{s \in N_{xy}} q_s & \text{ if } \tau(y) \neq y  \\
\delta_{x,y} 1 & \text{ if } \sigma(y) = y = \tau(y). 
\end{cases}
$$
In particular, every $ u^\rho_{x,y} $ for $ x, y \in V_X $ is a projection. In addition, we have 
\begin{align*}
\sum_{x \in V_X} u^\rho_{x,y} &= \sum_{r = 1}^k \sum_{x \in V_X} \delta_{x, \sigma^r(y)} p_r + \sum_{s = 1}^k \sum_{x \in V_X} \delta_{x, \tau^s(y)} q_s - 1
= \sum_{r = 1}^k  p_r + \sum_{s = 1}^k q_s - 1 = 1,  
\end{align*}
and similarly 
\begin{align*}
\sum_{y \in V_X} u^\rho_{x,y} &= \sum_{r = 1}^k \sum_{y \in V_X} \delta_{x,\sigma^r(y)} p_r + \sum_{s = 1}^k \sum_{y \in V_X} \delta_{x,\tau^s(y)} q_s - 1 
= \sum_{r = 1}^k p_r + \sum_{s = 1}^k q_s - 1 = 1 
\end{align*}
as required. It follows that $ \rho = (\mathbb{C}^k, u^\rho) $ is a quantum automorphism of $ X $. 

Let us now check that $ \rho $ is irreducible. If $ \ord(\sigma) = m < \infty $ then 
upon decomposing $ X $ into the orbits of $ \sigma $ we see that there are finitely many elements $ v_1, \dots, v_a \in V_X $, fixed under $ \tau $, 
such that $ (\sigma^t(v_1), \dots, \sigma^t(v_a)) = (v_1, \dots, v_a) $ for $ 0 \leq t < k $ implies $ t = 0 $. 
If $ \ord(\sigma) = \infty $ we either find an infinite orbit, or orbits of arbitrarily large finite size. 
Again this allows us to choose $ v_1, \dots, v_a \in V_X $ such that $ (\sigma^t(v_1), \dots, \sigma^t(v_a)) = (v_1, \dots, v_a) $ for $ 0 \leq t < k $ implies $ t = 0 $.
In fact, we may take $ v_1 = \cdots = v_a $ for a suitably chosen vertex in this case. 
In either case, it follows that $ u^\rho_{\sigma^r(v_1), v_1} \cdots u^\rho_{\sigma^r(v_a), v_a} = p_r $. 

In the same way we find $ w_1, \dots, w_b \in V_X $ such that $ u^\rho_{\tau^r(w_1), w_1} \cdots u^\rho_{\tau^r(w_b), w_b} = q_r $. 
Since the projections $ p_i, q_j $ for $ 1 \leq i,j \leq k $ generate $ M_k(\mathbb{C}) $ this yields the claim.
\end{proof}

\subsubsection{Finite trees} 

Recall that a graph $ X = (V_X, E_X) $ is called a \emph{tree} if for all vertices $ x, y \in V_X $ there exists a unique path connecting $ x, y $. 
Here by a path we mean a finite sequence of vertices $ x = x_0, x_1, \dots, x_n = y $ such that $ (x_{i - 1}, x_i) \in E_X $ for all $ i = 1, \dots, n $. 

Proposition \ref{disjointautomorphisms} can be used to show that generic finite trees have quantum symmetry. More precisely, Junk-Schmidt-Weber have shown
in \cite{JUNK_SCHMIDT_WEBER_trees} that almost all finite trees admit a disjoint pair of ``cherries'' in the following sense. 

\begin{definition} 
Let $ X = (V_X, E_X) $ be a graph. A cherry in $ X $ is a triple $ (x, x_1, x_2) $ of pairwise distinct vertices in $ X $ such that 
\begin{bnum} 
\item[a)] $ (x_i, x) \in E_X $ for $ i = 1,2 $. 
\item[b)] The degree of $ x_i $ is one for $ i = 1,2 $. 
\item[c)] The degree of $ x $ is three. 
\end{bnum} 
\end{definition} 

Here the \emph{degree} of a vertex $ x $ is the number of vertices $ y \in V_X $ such that $ (x,y) \in E_X $. 
We can visualise a graph $ X $ with a cherry as in Figure \ref{cherry}, where the box symbolises an arbitrary graph connected to the vertex $ x $ by a single edge. 

\begin{figure}[ht]
\centering
\begin{tikzpicture}
    \node[fill=black, circle, inner sep=2pt] (cherry) at (0, -1) {};
    \node at (0.4, -1) {$x$}; 
    
    \node[fill=black, circle, inner sep=2pt] (leaf1) at (2, -2) {};
    \node at (2.4, -2) {$x_1$}; 
    
    \node[fill=black, circle, inner sep=2pt] (leaf2) at (-2, -2) {};
    \node at (-2.4, -2) {$x_2$}; 
     
    \node[draw, minimum width=2cm, minimum height=1cm] (rectangle) at (0, 1) {};

    \draw (cherry) -- (leaf1);
    \draw (cherry) -- (leaf2);

    \draw (cherry) -- (rectangle);
\end{tikzpicture}
\caption{A graph with a cherry} \label{cherry}
\end{figure}
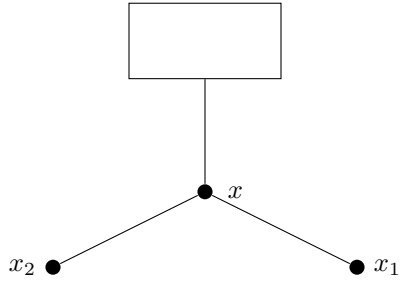

Any graph with a disjoint pair of cherries admits a pair of disjoint automorphisms given by flipping the ``branches'' of the cherries, and therefore has quantum symmetry by Proposition \ref{disjointautomorphisms}. The work by Junk-Schmidt-Weber thus establishes the following analogue of a well-known result by Erd\H{o}s-R\'enyi \cite{ERDOES_RENYI_asymmetricgraphs} about classical symmetries of finite trees.  

\begin{theorem}[Junk-Schmidt-Weber 2020]
Almost all finite trees have quantum symmetry. 
\end{theorem}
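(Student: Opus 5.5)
The plan is to reduce the statement to Proposition \ref{disjointautomorphisms} together with a counting argument for trees. Let $t_n$ be the number of isomorphism classes of trees on $n$ vertices, and let $c_n$ be the number of those classes containing two cherries $(x,x_1,x_2)$ and $(y,y_1,y_2)$ with $\{x,x_1,x_2\} \cap \{y,y_1,y_2\} = \emptyset$. The claim to prove is that $c_n/t_n \to 1$.

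The graph-theoretic step is short. Given such a pair of disjoint cherries, let $\sigma$ be the transposition exchanging $x_1$ and $x_2$ and fixing all other vertices. Since $x_1,x_2$ are leaves with the same unique neighbour $x$, $\sigma$ preserves adjacency, so $\sigma \in \Aut(X)$. Define $\tau$ in the same way using $y_1,y_2$. Because the supports $\{x_1,x_2\}$ and $\{y_1,y_2\}$ are disjoint, $\sigma$ and $\tau$ are disjoint automorphisms, each nontrivial of order $2$. Proposition \ref{disjointautomorphisms} then gives an irreducible quantum automorphism of dimension $2$, so $X$ has quantum symmetry.

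The main obstacle is the asymptotic statement that almost all trees contain a disjoint pair of cherries. This is the content of \cite{JUNK_SCHMIDT_WEBER_trees}, and within these notes I would simply cite it. To argue it directly, I would follow the Erd\H{o}s-R\'enyi style of analysis, or Schwenk's result that almost every tree contains any fixed limb, where a limb is a rooted subtree attached by a single edge. Take the fixed limb $L$ to be two disjoint cherries, each hanging by its own edge from a common vertex. The cherry condition requires the centre $x$ to have degree exactly three, and this is exactly what attaching each cherry by a single edge guarantees. Schwenk's theorem, which uses the generating-function asymptotics of P\'olya and Otter, shows that the proportion of $n$-vertex trees not containing $L$ as a limb tends to $0$. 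Any tree containing $L$ has two disjoint cherries, and the argument of the previous paragraph finishes the proof.

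The delicate part is making sure that the counting uses unlabelled trees up to isomorphism, or labelled trees if that is the intended model. Schwenk-type limb results hold in both models, so either reading of ``almost all'' is covered.
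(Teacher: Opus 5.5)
Your proposal is correct and follows the paper's argument. Flipping the two leaves of each of two disjoint cherries gives nontrivial disjoint automorphisms of order two, so Proposition~\ref{disjointautomorphisms} yields a two-dimensional irreducible quantum automorphism, and the asymptotic input that almost all finite trees contain a disjoint pair of cherries is taken from Junk--Schmidt--Weber (your optional derivation of it from Schwenk's limb theorem is also sound, since the cherry centres are non-root vertices of the limb and so keep degree three in any tree containing it).
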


In fact, one can give a complete characterisation of the quantum automorphism groups of finite trees as follows \cite{DEBRUYN_KAR_ROBERSON_SCHMIDT_ZEMAN_quantumautomorphismtrees}. 

\begin{theorem}[de Bruyn-Kar-Roberson-Schmidt-Zeman 2023] \label{trees}
The class $ \T $ of quantum automorphism groups of finite trees is the smallest class of discrete quantum groups 
with the following properties. 
\begin{bnum} 
\item[a)] The trivial quantum group is contained in $ \T $. 
\item[b)] If $ \GG, \HH \in \T $ then their free product $ \GG * \HH $ is contained in $ \T $
\item[c)] If $ \GG \in \T $ then the free wreath product $ \GG \,wr_* \Sym_n^+ $ is contained in $ \T $. 
\end{bnum} 
\end{theorem}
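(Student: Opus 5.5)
The proof splits into two inclusions: every quantum automorphism group of a finite tree lies in $\T$, and every element of $\T$ is realised by some finite tree. I would argue in the discrete picture throughout: via Tannaka-Krein (Theorem \ref{TK}) it suffices to describe the rigid $C^*$-tensor categories of finite dimensional quantum automorphisms together with their forgetful functors.

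\textbf{Reduction to rooted trees.} Every finite tree has a canonical centre, which is either a vertex or an edge. I would first show that quantum automorphisms preserve the centre. For a magic unitary $u$ with $A_X u = u A_X$, the entries $u_{x,y}$ vanish unless $x,y$ have the same eccentricity. Indeed, $u$ commutes with every polynomial in $A_X$, hence with the distance matrices $D_k$, which are polynomials in $A_X$ for a tree. Lemma \ref{adjacencyrelations} applied to the distance-$k$ graphs then gives $u_{i,j}u_{k,l}=0$ whenever $d(i,k)\neq d(j,l)$. Summing over $l$ shows that $u_{i,j}=0$ unless $i$ and $j$ have the same eccentricity. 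So the centre is preserved.

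In the bicentral case, I would subdivide the central edge. This adds a new vertex which is a unique centre and which every quantum automorphism fixes, and it does not change the quantum automorphism group. After this step it is enough to treat trees rooted at a vertex fixed by all quantum automorphisms.

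\textbf{Induction on depth.} For a rooted tree $T$ with root $r$, remove $r$. This leaves rooted subtrees $T_1,\dots,T_m$, with roots the neighbours of $r$. Group them into isomorphism classes, with $n_i$ copies of a rooted tree $S_i$, $i=1,\dots,\ell$. The claim is
$$\Qut_\delta(T)\cong \ast_{i=1}^{\ell}\bigl(\Qut_\delta(S_i)\,wr_*\,\Sym^+_{n_i}\bigr).$$
Here $\Qut_\delta$ of a rooted tree means quantum automorphisms fixing the root.

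To prove this, take the magic unitary $u$. Since $u_{r,r}=1$ and $u$ commutes with the distance matrices from $r$, the block $u_{T_a,T_b}$ is supported on pairs of subtrees. The projections $P_{ab}=u_{r_a,r_b}$ form an $m\times m$ magic unitary, and it is block diagonal across isomorphism classes. For that, I would use a quantum version of the fact that the quantum automorphisms of the disjoint union of connected pieces only mix isomorphic (indeed quantum isomorphic) components. Then one checks that for each fixed $(a,b)$, the compressed matrix $u_{T_a,T_b}P_{ab}$ is a quantum isomorphism between $T_a$ and $T_b$. These are exactly the relations defining the free wreath product: a magic unitary $P$ together with copies of the universal quantum automorphism of $S_i$ commuting with the entries of $P$ in the prescribed row. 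Conversely, any such data assembles into a quantum automorphism of $T$. I would check this by comparing the tensor categories and their fibre functors. The orthogonal decomposition across classes yields a free product, because no relations are imposed between blocks of different classes.

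\textbf{Conclusion of both inclusions.} Induction on depth now gives $\Qut_\delta(T)\in\T$, with the one-vertex tree giving the trivial group. Conversely, each element of $\T$ is built from the trivial group by free products and free wreath products. I would realise it by gluing rooted trees under a common new root, using pairwise non-isomorphic trees for free products and $n$ copies for free wreath products. Non-isomorphism can be ensured by attaching paths of distinct lengths, which adds no quantum symmetry. The root must be kept fixed; for instance, append a long path to the root so that it becomes the unique centre.

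\textbf{Main obstacle.} The hardest step is showing that non-isomorphic rooted subtrees cannot be quantum isomorphic. Otherwise $P$ could mix classes and the decomposition would fail. For trees, quantum isomorphism implies isomorphism: by Theorem \ref{qiMRchar}, homomorphism counts from planar graphs, in particular from all trees, agree, and trees are determined by such counts. I would try this route first, adapting it to rooted trees by counting rooted homomorphisms.
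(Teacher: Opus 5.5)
Your overall plan matches the outline the paper gives for the de Bruyn--Kar--Roberson--Schmidt--Zeman proof: reduce to a vertex fixed by all quantum automorphisms, then induct. Isomorphic branches produce free wreath products and distinct classes produce free products. However, two steps are false as stated.

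\textbf{Distance matrices.} Distance matrices of a tree are not polynomials in $A_X$ in general. Take the path with edges $\{1,2\},\{2,3\}$. There $A_X^3 = 2A_X$, so every polynomial in $A_X$ has the form $\alpha + \beta A_X + \gamma A_X^2$. Requiring $(1,3)$-entry $1$ and $(1,1)$-entry $0$ forces $\gamma = 1$ and $\alpha = -1$, which gives $(2,2)$-entry $1 \neq 0 = (D_2)_{2,2}$. What you actually need is $u_{i,j}u_{k,l} = 0$ whenever $d(i,k) \neq d(j,l)$. This is exactly Lemma \ref{distancelemma}, so cite it directly; your eccentricity argument then goes through.

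\textbf{Fixing the root in the converse.} Appending one long path at the root does not make the root the centre. Let $S$ have depth $h$ below $s$ and attach a path of length $L \geq h+2$. Then $s$ has eccentricity $L$, while its neighbour on the new path has eccentricity $\max(L-1,h+1) = L-1$. To keep $s$ fixed, attach at $s$ two branches of the same depth, exceeding the depth of all existing branches. They should be mutually non-isomorphic and have trivial rooted quantum automorphism group, e.g.\ a path, and a path carrying one extra leaf at its second vertex. Then $s$ is the unique centre, and by your decomposition the group only acquires trivial free factors. Using different depths for different trees also yields the non-isomorphic rooted trees you need for free products.

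\textbf{Your main obstacle.} This does not need Theorem \ref{qiMRchar}, nor reconstruction of trees from homomorphism counts (which you would still have to adapt to rooted trees). It closes inside your own induction. Write $\mathrm{ch}(x)$ for the children of $x$ and $T_x$ for the rooted subtree of descendants of $x$.
\begin{itemize}
\item Since $u_{r,r} = 1$, Lemma \ref{distancelemma} shows that $u$ preserves depth.
\item Since every non-root vertex has a unique parent, the same lemma gives $u_{c,d} \leq u_{x,y}$ for $c \in \mathrm{ch}(x)$, $d \in \mathrm{ch}(y)$.
\item It also gives $\sum_{d \in \mathrm{ch}(y)} u_{c,d} = u_{x,y} = \sum_{c \in \mathrm{ch}(x)} u_{c,d}$.
\end{itemize}
Now assume, by induction on the height of $T_x$, that $u_{c,d} \neq 0$ forces $T_c \cong T_d$ for $c \in \mathrm{ch}(x)$. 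Fix a rooted isomorphism class $K$ and sum $u_{c,d}$ over $c \in K \cap \mathrm{ch}(x)$, $d \in \mathrm{ch}(y)$ in two ways. This gives $|K \cap \mathrm{ch}(x)|\, u_{x,y} = |K \cap \mathrm{ch}(y)|\, u_{x,y}$. Hence $u_{x,y} \neq 0$ implies $T_x \cong T_y$ as rooted trees.

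Applied to $x = r_a$, $y = r_b$, this is exactly the block diagonality of $P$. It holds for arbitrary quantum automorphisms, not only finite-dimensional ones.
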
 

We shall not explain the definition of free products or free wreath products of discrete quantum groups here. Let us only remark that the main result of \cite{DEBRUYN_KAR_ROBERSON_SCHMIDT_ZEMAN_quantumautomorphismtrees} is phrased differently, using the language of compact 
quantum groups. It is straightforward to check that this implies Theorem \ref{trees} via discretisation. 

The proof by de Bruyn-Kar-Roberson-Schmidt-Zeman proceeds by first reducing the claim to the case of rooted trees and then using an inductive argument. 
For the details and further information we refer to \cite{DEBRUYN_KAR_ROBERSON_SCHMIDT_ZEMAN_quantumautomorphismtrees}.

\subsubsection{The infinite line} 

Let us study the quantum symmetry of the ``infinite line'' graph $ L $, that is, the graph with vertex set $ \mathbb{Z} $ and edges connecting $ n $ and $ n + 1 $ 
for all $ n \in \mathbb{Z} $. This is the (undirected) Cayley graph of the group $ \mathbb{Z} $ with respect to the generating set $ \{\pm 1\} $. 

We shall need an auxiliary result. 
For vertices $ x,y $ in a graph $ X $, denote by $ d(x,y) $ the distance between $ x $ and $ y $, that is, the length 
of a shortest path connecting them. By definition, we set $ d(x,y) = \infty $ if there is no such path. 

\begin{lemma}[Schmidt 2020] \label{distancelemma}
Let $ X $ be a graph and let $ x_1, x_2, y_1, y_2 \in V_X $ such that $ d(x_1, x_2) \neq d(y_1, y_2) $. If $ (\Hc, u) $ is a quantum automorphism of $ X $ 
then $ u_{x_1, y_1} u_{x_2, y_2} = 0 $. 
\end{lemma}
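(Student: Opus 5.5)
Without loss of generality $d(x_1,x_2) = m < d(y_1,y_2)$ with $m$ finite; otherwise pass to adjoints, since $(u_{x_1,y_1}u_{x_2,y_2})^* = u_{x_2,y_2}u_{x_1,y_1}$, and use the symmetry of the situation. If the smaller distance is attained on the $y$-side, apply the argument to the conjugate quantum permutation, which swaps the roles of rows and columns. We induct on $m$. The case $m = 0$ means $x_1 = x_2$ and $y_1 \neq y_2$, and then $u_{x_1,y_1}u_{x_1,y_2} = 0$ by Lemma \ref{orthogonality}, since entries in a row are orthogonal. The case $m = 1$ means $(A_X)_{x_1,x_2} = 1 \neq 0 = (A_X)_{y_1,y_2}$, because $d(y_1,y_2) \geq 2$. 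The infinite-graph version of Lemma \ref{adjacencyrelations} then gives $u_{x_1,y_1}u_{x_2,y_2} = 0$. Its proof carries over verbatim, using strong convergence and positivity of the terms $u_{k,l}u_{i,t}u_{k,l}$.

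For the inductive step, let $m \geq 2$ and choose a neighbour $z$ of $x_2$ with $d(x_1,z) = m-1$, which exists because a shortest path from $x_1$ to $x_2$ exists. Insert the partition of unity $\sum_{w} u_{z,w} = 1$ from row $z$:
$$ u_{x_1,y_1}u_{x_2,y_2} = \sum_{w \in V_X} u_{x_1,y_1}u_{z,w}u_{x_2,y_2}, $$
with strong convergence, which is legitimate by separate strong continuity of multiplication. Now consider a term indexed by $w$. If $d(y_1,w) \neq m-1$, the induction hypothesis applied to $(x_1,z,y_1,w)$ gives $u_{x_1,y_1}u_{z,w} = 0$. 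If $(A_X)_{w,y_2} = 0$, then since $(A_X)_{z,x_2} = 1$, Lemma \ref{adjacencyrelations} gives $u_{z,w}u_{x_2,y_2} = 0$. So only terms with $d(y_1,w) = m-1$ and $w$ adjacent to $y_2$ survive. But then $d(y_1,y_2) \leq m$, which together with $d(y_1,y_2) > m$ is a contradiction. Hence every term vanishes, and $u_{x_1,y_1}u_{x_2,y_2} = 0$.

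The main subtlety is organising the induction so the hypothesis covers exactly what is needed. The statement should be proved for all pairs with $\min(d(x_1,x_2),d(y_1,y_2)) \le m$ and unequal distances, with the case $d(x_1,x_2) = \infty$ reducing to the finite-$y$-distance case by the symmetry described above. That is the place to be careful. The infinite sums themselves pose no problem, since the argument only uses strongly convergent partitions of unity and termwise vanishing.
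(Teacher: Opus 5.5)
Your proof is correct, but it is organised differently from the paper's.

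The paper does not induct. It fixes a shortest path $x_1 = v_0, v_1, \dots, v_m = x_2$ and inserts all $m-1$ row partitions of unity $\sum_{z_k} u_{v_k,z_k} = 1$ at once, using joint strong continuity of multiplication on bounded sets. In each summand the sequence $y_1 = z_0, z_1, \dots, z_m = y_2$ cannot be a walk of length $m$. So some $(z_k,z_{k+1})$ is a non-edge while $(v_k,v_{k+1})$ is an edge, and Lemma \ref{adjacencyrelations} kills the term.

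You insert only the partition of unity for the penultimate vertex $z$. You then kill each term either by the induction hypothesis for $(x_1,z,y_1,w)$ or by Lemma \ref{adjacencyrelations} for the last pair. Unrolled, this is the same path-lifting idea.

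Your version gains something and pays for it:
\begin{itemize}
\item[$\bullet$] The gain is that only separate strong continuity is needed, since you insert one partition of unity at a time.
\item[$\bullet$] The cost is that the induction hypothesis must also cover the case $d(y_1,w) < m-1$, where the smaller distance lies on the $y$-side. So the row/column symmetry becomes an essential part of the induction, not just a ``without loss of generality'' at the start as in the paper.
\end{itemize}

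You handle this correctly. The conjugate $\overline{u}$ is again a quantum automorphism, as the paper notes. Moreover, $\overline{u}_{y_1,x_1}\overline{u}_{y_2,x_2}$ is the image of $u_{x_1,y_1}u_{x_2,y_2}$ under the injective antilinear multiplicative map $T \mapsto (\overline{\xi} \mapsto \overline{T\xi})$, so vanishing transfers. The transpose $(u_{y,x})_{x,y}$ would do the same job more directly. It is again a quantum automorphism because $A_X$ is symmetric, and its relevant product is literally the operator $u_{x_1,y_1}u_{x_2,y_2}$.

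Your remark about passing to adjoints is not needed.
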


\begin{proof} 
The following argument is taken from \cite[Lemma 3.2]{SCHMIDT_distancetransitive}. Assume without loss of generality that $ m = d(x_1, x_2) < d(y_1, y_2) $. 
If $ m = 0 $ we have $ x_1 = x_2, y_1 \neq y_2 $ and hence $ u_{x_1, y_1} u_{x_1, y_2} = 0 $ 
by Lemma \ref{orthogonality}. If $ m = 1 $ we have $ (x_1, x_2) \in E_X $ and $ (y_1, y_2) \notin E_X $, in which case the claim follows from Lemma \ref{adjacencyrelations}. 
We note here that the statement and proof of this Lemma extend to infinite graphs in a straightforward way. 

It thus suffices to consider $ m \geq 2 $. Let $ x_1 = v_0, v_1, \dots, v_m = x_2 $ be a path of length $ m $ connecting $ x_1 $ and $ x_2 $. By the magic unitary condition we get 
\begin{align*}
u_{x_1, y_1} u_{x_2, y_2} &= u_{x_1, y_1} \bigg(\sum_{z_1 \in V_X} u_{v_1, z_1} \bigg) \cdots \bigg(\sum_{z_{m - 1} \in V_X} u_{v_{m - 1}, z_{m - 1}} \bigg) u_{x_2, y_2} \\
&= \sum_{z_1, \dots, z_{m - 1} \in V_X} u_{x_1, y_1} u_{v_1, z_1} \cdots u_{v_{m - 1}, z_{m - 1}} u_{x_2, y_2}, 
\end{align*} 
using for the second equality that multiplication is jointly strongly continuous on bounded sets. 
Since $ d(y_1, y_2) > m $ there is no path of length $ m $ from $ y_1 $ to $ y_2 $. This means that in each summand of the last sum, there is an 
index $ 0 \leq k \leq m - 1 $ such that $ (z_k, z_{k + 1}) \notin E_X $, where we interpret $ z_0 = y_1, z_m = y_2 $. We then have $ u_{v_k, z_k} u_{v_{k + 1}, z_{k + 1}} = 0 $, 
and hence also $ u_{x_1, y_1} u_{v_1, z_1} \cdots u_{v_{m - 1}, z_{m - 1}} u_{x_2, y_2} = 0 $. This yields the claim. 
\end{proof} 

We are now ready to discuss the following result. 

\begin{prop} \label{linegraph}
The ``infinite line'' graph has no quantum symmetry. 
\end{prop}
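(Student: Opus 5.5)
We have to show that every irreducible quantum automorphism $(\Hc,u)$ of $L$ is one-dimensional, and the plan is to show that all entries $u_{x,y}$ commute. Then the commutant of the entries contains every $u_{x,y}$, so irreducibility forces each $u_{x,y}$ to be $0$ or $1$, and $\dim \Hc = 1$. Since $L$ is connected, the distance is finite everywhere, and Lemma \ref{distancelemma} gives $u_{x_1,y_1}u_{x_2,y_2} = 0$ whenever $|x_1-x_2| \neq |y_1-y_2|$. So it suffices to show that $u_{x,y}$ and $u_{x',y'}$ commute when $|x-x'| = |y-y'|$.

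The key structural observation is that a vertex $x$ of $L$ is determined by its distances to two adjacent vertices, e.g.\ $0$ and $1$. Fix $x,y\in\mathbb{Z}$ and insert $1 = \sum_{a} u_{0,a}$ and $1=\sum_b u_{1,b}$. By Lemma \ref{distancelemma}, $u_{x,y}u_{0,a}=0$ unless $|a-y|=|x|$, and $u_{x,y}u_{1,b}=0$ unless $|b-y|=|x-1|$. Also $u_{0,a}u_{1,b}=0$ unless $|a-b|=1$. Hence
$$
u_{x,y} = \sum_{a,b} u_{x,y}\,u_{0,a}u_{1,b} \quad\text{summed only over } a=y\pm x,\ b = y\pm(x-1),\ |a-b|=1 ,
$$
and one checks that for $x\neq 0,1$ the pair $(a,b)$ is then determined up to two choices corresponding to the two "orientations", namely $(a,b) = (y-x, y-x+1)$ or $(y+x, y+x-1)$. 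This leads to considering the projections $P_+ = \sum_a u_{0,a}u_{1,a+1}$ and $P_- = \sum_a u_{0,a}u_{1,a-1}$. We have to check that these are projections with $P_++P_-=1$, using that $\sum_b u_{1,b}u_{0,a} $ restricted to $b=a\pm1$ equals $u_{0,a}$ by Lemma \ref{adjacencyrelations}.

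Then we show that $u_{x,y} = \sum_{\text{two terms}}$ with $u_{x,y}u_{0,y-x}u_{1,y-x+1}$ and $u_{x,y}u_{0,y+x}u_{1,y+x-1}$. The idea is to prove that $P_\pm$ commute with all $u_{x,y}$ and that on the range of $P_+$ one has $u_{x,y} = \sum_a$ over translations, i.e.\ $u_{x,y}P_+ = u_{0,y-x}P_+$, and similarly $u_{x,y}P_- = u_{0,y+x}P_-$. For this we propagate along the line by induction on $x$. If $x$ and $x+1$ are adjacent, $u_{x,y}$ is supported on $u_{x+1,y\pm1}$, and the distance constraints from $0$ force the sign consistently with the orientation. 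In this way all $u_{x,y}$ are expressed in terms of $\{u_{0,a}\}$ and $P_\pm$. Since the $u_{0,a}$ are mutually orthogonal, $u_{x,y} = u_{0,y-x}P_+ + u_{0,y+x}P_-$, with the middle vertex overlap $x=0$ handled directly. Commutation then reduces to showing that $P_+$ commutes with each $u_{0,a}$, and this is clear since $u_{0,a}P_+ = u_{0,a}u_{1,a+1}$, which must be self-adjoint because $P_+ = \sum_a u_{0,a}u_{1,a+1}$ is a projection and the terms are mutually orthogonal ranges.

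The main obstacle is to verify rigorously that $P_+$ is a projection commuting with the $u_{0,a}$, i.e.\ that $u_{0,a}$ and $u_{1,a+1}$ commute. I would try first to write $u_{0,a}u_{1,a+1} = u_{0,a}(1-u_{1,a-1})$, using $u_{0,a}u_{1,b}=0$ for $|a-b|\ne1$ and the row sum, and symmetrically $u_{1,a+1}u_{0,a} = u_{1,a+1}(1-u_{0,a+2})$. Then I would use a Lemma \ref{quantumpermutationssmalln}-type trick, $u_{0,a}u_{1,a+1} = u_{0,a}u_{1,a+1}u_{0,a}$, to get self-adjointness. The remaining steps are then bookkeeping with strongly convergent sums, which we handle as in Lemma \ref{distancelemma}.
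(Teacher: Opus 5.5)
Your proposal is correct, and it takes a genuinely different route from the paper. The paper only proves the finite-dimensional case, by contradiction. A non-commuting pair $u_{i,j}, u_{i+m,j\pm m}$ forces non-commutation to spread over the lattice $S$, so infinitely many entries $u_{i+2rm,j}$ of one column are nonzero, which is impossible in finite dimensions. The infinite-dimensional case is deferred to the reference. You instead work on an arbitrary Hilbert space. First you show that the entries of two adjacent rows commute. Then you build the ``orientation'' projections $P_\pm$, and finally you show that every entry is determined by row $0$ via $u_{x,y} = u_{0,y-x}P_+ + u_{0,y+x}P_-$. This places all entries in a commutative algebra. So every quantum automorphism is classical in any dimension, which gives the full statement rather than just its finite-dimensional part. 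It even shows that every quantum automorphism is a direct sum of the classical automorphisms $y \mapsto y-a$ and $y \mapsto a-y$. The paper's argument is shorter and needs no explicit structure, but it depends essentially on finite-dimensionality.

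Two points to tidy up. First, the commutation of $u_{0,a}$ with $u_{1,a\pm1}$ has to come first. Both the restriction to $b = y\pm(x-1)$ in your first sum and the claim that $P_\pm$ are projections depend on it. Your sentence deriving self-adjointness of $u_{0,a}u_{1,a+1}$ from $P_+$ being a projection is circular and should be dropped. Your two identities do settle the point. We have $u_{0,a}u_{1,a+1}u_{0,a+2} = u_{0,a}(1-u_{1,a-1})u_{0,a+2} = 0$, because $u_{0,a}u_{0,a+2}=0$, and $u_{1,a-1}u_{0,a+2}=0$ since $d(a-1,a+2) = 3 \neq 1$. Hence $u_{0,a}u_{1,a+1} = u_{0,a}u_{1,a+1}(u_{0,a}+u_{0,a+2}) = u_{0,a}u_{1,a+1}u_{0,a}$ is self-adjoint. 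Then $u_{0,a}u_{1,a-1} = u_{0,a}-u_{0,a}u_{1,a+1}$ is self-adjoint as well.

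Second, the induction on $x$ is unnecessary. Once these commutations hold, apply Lemma \ref{distancelemma} to $u_{x,y}u_{1,a+1}u_{0,a}$: only $a = y-x$ survives in $u_{x,y}P_+ = \sum_a u_{x,y}u_{0,a}u_{1,a+1}$, so $u_{x,y}P_+ = u_{x,y}u_{0,y-x}P_+$ for every $x$. Now expand $u_{0,y-x}P_+$ along column $y$ and use this identity for each $x'$. This gives $u_{0,y-x}P_+ = \sum_{x'} u_{x',y}u_{0,y-x'}u_{0,y-x}P_+ = u_{x,y}u_{0,y-x}P_+$. Hence $u_{x,y}P_+ = u_{0,y-x}P_+$, and the same argument gives $u_{x,y}P_- = u_{0,y+x}P_-$.
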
 

\begin{proof} 
For simplicity we shall only show that the infinite line graph $ L $ has no non-classical finite dimensional quantum automorphisms. For the general case see \cite[Proposition 7.9]{VOIGT_infinitequantumpermutations}. 

Assume that $ (\Hc,u) $ is a finite dimensional quantum automorphism  of $ L $ and pick $ i,j,k,l \in \mathbb{Z} $ such that $ u_{i,j}, u_{k,l} $ do not commute. 
Due to Lemma \ref{distancelemma} we may assume without loss of generality $ k = i + m $ and $ l = j \pm m $ for some $ m > 0 $. 
Since all $ u_{k,s} $ commmute with $ u_{i,j} $ unless $ |s - j| = m $ we get in fact that $ u_{i,j} $ does not commute with either of $ u_{i + m, j \pm m} $. 

Since $ u_{i + 2m, j} $ is the only other term apart from $ u_{i,j} $ in the $ j $-th column of $ u $ whose first index has distance $ m $ from $ i + m $, 
the magic unitary condition implies that $ u_{i + 2m, j} $ does not commute with either of $ u_{i + m, j \pm m} $. Now consider the set
$$
S = \{(i + rm, j + sm) \mid r, s \in \mathbb{Z} \text{ and } r + s \text{ even} \} \subset \mathbb{Z}^2.  
$$
Using the above argument repeatedly we see that the projections $ u_{a,b}, u_{c,d} $ do not commute if $ (a,b), (c,d) \in S $ are adjacent in the sense that $ (c,d) = (a \pm m, b \pm m) $. This means in particular that all matrix entries $ u_{a,b} $ with $ (a,b) \in S $ must be nonzero. 
Hence all operators $ u_{i + 2rm, j} $ for $ r \in \mathbb{Z} $ are nonzero. 
This is impossible for a finite dimensional quantum automorphism. 
\end{proof}

\subsubsection{The Rado graph} 

Let us finally consider the Rado graph $ R $, also known as the random graph, or Erd\H{o}s-R\'enyi graph, see \cite{CAMERON_randomgraphrevisited} for a survey. 
This graph admits many equivalent descriptions, and is obtained with probability $ 1 $ if one 
takes a countable set of vertices and attaches edges to them at random. 

Explicitly, the graph $ R $ can be defined as the graph with vertex set $ V_R $ given by the prime numbers congruent $ 1 \bmod 4 $, 
and with edges $ (p,q) \in E_R $ iff $ p $ is a quadratic residue mod $ q $. Here the symmetry relation $ (p,q) \in E_R $ iff $ (q,p) \in E_R $ follows from 
the law of quadratic reciprocity. 

A key property of $ R $ is that for any pair of disjoint finite sets $ A,B \subset V_R $ there exists a vertex $ w \in V_R $ such that $ (x,w) \in E_R $ 
for all $ x \in A $ and $ (y,w) \notin E_R $ for all $ y \in B $. 
Using this property it can be shown that any partial isomorphism between finite subgraphs of $ R $ can be extended to a global automorphism. Here by a \emph{ partial isomorphism between finite subgraphs} we mean a bijection $ \theta: A \rightarrow B $ between finite subsets $ A,B \subset V_R $ which preserves the adacency relation in the sense that $ (\theta(x),\theta(y)) \in E_R \Leftrightarrow (x,y) \in E_R $ 
for all $ x,y \in A $. This implies that $ R $ has a large automorphism group.  

In contrast, we have the following result due to Ismaeel \cite{ISMAEEL_rado}.

\begin{theorem} \label{ER}
The Rado graph $ R $ has no quantum symmetry. 
\end{theorem}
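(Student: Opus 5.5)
The plan is to show that for every quantum automorphism $(\Hc,u)$ of $R$ all matrix entries $u_{a,b}$ mutually commute. Then every irreducible quantum automorphism is classical, hence one-dimensional. Any $T$ commuting with all entries of a classical quantum permutation has a commutant containing the abelian von Neumann algebra generated by those entries, so irreducibility forces that algebra to be $\mathbb{C}1$. Thus all $u_{a,b}$ are $0$ or $1$, and $\Hc$ is one-dimensional by Lemma \ref{qpermdimone}.

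To prove that $u_{a,b}$ and $u_{c,d}$ commute, I first dispose of the trivial cases. If $a=c$ or $b=d$ the two projections are equal or orthogonal by Lemma \ref{orthogonality}. If $(A_R)_{a,c}\neq(A_R)_{b,d}$ their product vanishes in both orders by Lemma \ref{adjacencyrelations}, whose proof extends to infinite graphs as remarked in the proof of Lemma \ref{distancelemma}. So I may assume $a\neq c$, $b\neq d$ and $(A_R)_{a,c}=(A_R)_{b,d}$. Exactly as in the proof of Lemma \ref{quantumpermutationssmalln}, it suffices to show
$$ u_{a,b}\,u_{c,d}\,u_{a,b'}=0 \qquad \text{for all } b'\neq b. $$
Indeed, summing over $b'$ in the strong topology, using $\sum_{b'} u_{a,b'}=1$, then gives $u_{a,b}u_{c,d}=u_{a,b}u_{c,d}u_{a,b}$. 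The right-hand side is selfadjoint, so $u_{a,b}u_{c,d}=u_{c,d}u_{a,b}$.

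To kill $u_{a,b}u_{c,d}u_{a,b'}$, I use the extension property of $R$ on the \emph{target} side and insert a column partition of unity. Choose a vertex $e\notin\{b,b',d\}$ adjacent to $b$ and $d$ but not to $b'$. If $b'=d$, then $u_{c,d}u_{a,d}=0$ already because $a\neq c$, so I may assume $b'\neq d$. I then insert $1=\sum_t u_{t,e}$, expanding
$$ u_{a,b}\,u_{c,d}\,u_{a,b'}=\sum_{t} u_{a,b}\,u_{t,e}\,u_{c,d}\,u_{a,b'} $$
as in the proof of Lemma \ref{distancelemma}. By Lemma \ref{adjacencyrelations} a term survives only if $t$ is adjacent to $a$ and to $c$. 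Writing the same product with the insertion placed between $u_{c,d}$ and $u_{a,b'}$ instead, a term survives only if $t$ is adjacent to $c$ and \emph{not} adjacent to $a$.

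The key step, and the one I expect to be the main obstacle, is to reconcile these two expansions into a single vanishing statement. My first attempt is to insert two column sums simultaneously, over columns $e$ and $e'$ with complementary adjacency patterns to $b$ and $b'$. I would then use Lemma \ref{distancelemma} and the orthogonality of entries within a column (Lemma \ref{orthogonality}) to show that each remaining product $u_{a,b}u_{t,e}u_{c,d}u_{t',e'}u_{a,b'}$ forces contradictory adjacency requirements between $t,t'$ and $a$. Since $R$ lets us prescribe the adjacency of $e,e'$ to any finite set of vertices, I expect enough freedom to do this. Failing that, I would pass to finite-dimensional irreducible summands and try to exploit the homogeneity of $R$: via the extension of partial isomorphisms, conjugate by classical automorphisms so as to reduce to a small configuration where the above collapse works.
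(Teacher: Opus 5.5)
Your preliminary reductions are fine. The trivial cases are handled correctly, and $u_{a,b}u_{c,d}u_{a,b'}=0$ for all $b'\neq b$ is indeed equivalent to $u_{a,b}u_{c,d}=u_{a,b}u_{c,d}u_{a,b}$. But that is only a reformulation of the commutation you want, and the step that would prove it is missing.

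The planned double insertion cannot work. Lemmas \ref{orthogonality}, \ref{adjacencyrelations} and \ref{distancelemma} are all of the form $u_{i,j}u_{k,l}=0$, so they constrain only \emph{consecutive} factors of a product. In $u_{a,b}u_{t,e}u_{c,d}u_{t',e'}u_{a,b'}$ the index $t$ enters only through its relation to $a$ and $c$, $t'$ likewise, and nothing links $t$ to $t'$. Since $R$ has infinitely many vertices with any prescribed adjacency pattern to $\{a,c\}$, for every choice of $e,e'$ there are pairs $(t,t')$ to which none of these relations applies. Hence no term-by-term argument can make the sum vanish.

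The fallback does not help either. An infinite-dimensional irreducible quantum automorphism has no finite-dimensional summands, and composing with classical automorphisms of $R$ only relabels entries. What your expansions never use is the one non-local piece of information available: the entries of a single column commute with each other.

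This is the idea behind the paper's proof. Writing $y\sim w$ for adjacency, the $(x,w)$-entry of $A_Ru=uA_R$ reads
\[
\sum_{y\sim w}u_{x,y}=\sum_{v\sim x}u_{v,w}.
\]
So a row partial sum over a neighbourhood $N(w)$ lies in the abelian von Neumann algebra generated by column $w$, and two such sums, for rows $a$ and $c$ with the same $w$, commute.

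If $\Hc$ is finite dimensional, rows have finite support. The extension property then gives $w$ adjacent to $b$ and $d$ and to no other vertex in the supports of rows $a$ and $c$. Consequently $u_{a,b}+u_{a,d}$ commutes with $u_{c,b}+u_{c,d}$. Multiplying this relation on the left by $u_{a,b}$ and using $u_{a,b}u_{a,d}=u_{a,b}u_{c,b}=u_{c,d}u_{a,d}=0$ gives exactly $u_{a,b}u_{c,d}=u_{a,b}u_{c,d}u_{a,b}$.

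The paper instead isolates single entries $u_{x(\pm),y_1(\pm)}$. The two-point version above also covers the case where $d$ lies in the support of row $a$, or $b$ in that of row $c$, which the paper's choice of $w$ tacitly excludes.

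The paper stops at finite dimensions, but your goal for arbitrary $\Hc$ is reachable with the same idea:
\begin{itemize}
\item Exhaust the countable vertex set by finite sets $F_n\ni b,d$.
\item Pick $e_n$ with $N(e_n)\cap F_n=\{b,d\}$.
\item Note that $\sum_{y\sim e_n}u_{a,y}\to u_{a,b}+u_{a,d}$ and $\sum_{y\sim e_n}u_{c,y}\to u_{c,b}+u_{c,d}$ strongly.
\item Pass the commutation to the limit, using joint strong continuity of multiplication on bounded sets.
\end{itemize}
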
 

\begin{proof} 
We shall only give a proof of the simpler fact that $ R $ admits no finite dimensional quantum automorphisms which are not classical, see \cite[Proposition 7.15]{VOIGT_infinitequantumpermutations}. 

Assume that $ \sigma = (\Hc, u) $ is a non-classical finite dimensional quantum automorphism, so that the algebra generated by the projections $ u_{x, y} $ for $ x, y \in V_R $ is noncommutative. 

By assumption, we then find vertices $ x(+), x(-) $ and $ y_1(+), y_1(-) $ such $ u_{x(+), y_1(+)} $ and $ u_{x(-), y_1(-)} $ do not commute. 
We let $ u_{x(\pm), y_j(\pm)} $ for $ j = 2, \dots, r_\pm $ be the remaining nonzero projections in the row for $ x(\pm) $. Set 
$$
p_\alpha(\pm) = u_{x(\pm), y_1(\pm)}, \qquad p_\beta(\pm) = \sum_{j > 1} u_{x(\pm),y_j(\pm)}.  
$$
Then the projections $ p_\alpha(+), p_\beta(+) $ are orthogonal with $ p_\alpha(+) + p_\beta(+) = 1 $, 
and in the same way $ p_\alpha(-), p_\beta(-) $ are orthogonal with $ p_\alpha(-) + p_\beta(-) = 1 $. 
By construction, $ p_\alpha(+) $ and $ p_\alpha(-) $ do not commute, and hence $ p_\beta(+) $ and $ p_\beta(-) $ do not commute either. 

Choose a vertex $ w $ such that $ (w, y_1(+)) \in E_R, (w,y_1(-)) \in E_R $ and $ (w, y_i(\pm)) \notin E_R $ for all $ i > 1 $,
and consider the nonzero projections $ u_{v_i, w} $ in the column for $ w $, where $ 1 \leq i \leq k $ for some $ k $. 
If $ (v_i, x(\pm)) \in E_R $ then $ u_{v_i, w} $ is orthogonal to $ p_\beta(\pm) $, 
and if $ (v_i, x(\pm)) \notin E_R $ then $ u_{v_i, w} $ is orthogonal to $ p_\alpha(\pm) $. 
We conclude $ u_{v_i,w} \leq p_\alpha(\pm) $ in the first case and $ u_{v_i, w} \leq p_\beta(\pm) $ in the second case. 
Since the projections $ u_{v_i, w} $ form a partition of unity it follows that we can write each of $ p_\alpha(\pm) $ and $ p_\beta(\pm) $ as sums of certain $ u_{v_i, w} $. 

However, this means in particular that all these projections commute, which yields a contradiction. 
\end{proof}

\subsection{Exercises}

\begin{exercise}
Let $ X $ be a graph. The complement of $ X $ is the graph $ X^c $  with the same vertex set as $ X $, such that for $ x,y \in V_X $ with $ x \neq y $ we have $ (x,y) \in E_{X^c} $ iff $ (x,y) \notin E_X $. 
Show that a quantum permutation $ (\Hc,u) $ of $ V_X $ is a quantum automorphism of $ X $ iff it is a quantum automorphism of $ X^c $. 
\end{exercise}

\begin{exercise}
Let $ X $ be a connected graph with no quantum symmetry and let $ Y $ be the disjoint union of two copies of $ X $. Does $ Y $ have quantum symmetry? 
\end{exercise}

\begin{exercise}
In the setting of the previous exercise, what happens if we no longer assume that $ X $ is connected? 
\end{exercise}

\begin{exercise} 
A graph $ X $ is called locally finite if every vertex $ x \in V_X $ is connected to only finitely many other vertices. In this case, the adjacency matrix $ A_X $ of $ X $ is locally finite, that is, there are only finitely many nonzero entries in each column and each row of $ A_X $. 

By definition, the coherent algebra of a locally finite graph $ X $ is the smallest 
subalgebra of locally finite $ V_X \times V_X $-matrices containing $ \id $ and $ A_X $ which is closed under entrywise multiplication. 
Show that if $ (\Hc, u) $ is a quantum automorphism of $ X $ then $ T u = u T $ for all $ T $ in the coherent algebra of $ X $. 
\end{exercise}

\begin{exercise}
Describe the irreducible quantum automorphisms of all graphs with at most $ 4 $ vertices. 
\end{exercise}

\section{Quantum symmetries in topology} 

In this final chapter we study quantum symmetry in the setting of locally compact topological spaces. This builds on the theory of infinite quantum permutations, by incorporating a natural notion of continuity. 

\subsection{Definitions} 

Let $ X $ be a locally compact space and denote by $ X_\delta $ the underlying set, or equivalently, the space $ X $ equipped 
with the discrete topology. Then the identity map $ X_\delta \rightarrow X $ is continuous and induces a 
nondegenerate injective $ * $-homomorphism $ can: C_0(X) \rightarrow M(C_0(X_\delta)) = C_b(X_\delta) $. 
Note that, in fact, every set-theoretic map $ X \rightarrow Y $ from $ X $ into a topological space $ Y $ becomes continuous when viewed as 
a map $ X_\delta \rightarrow Y $. 

We are interested in quantum permutations of $ X_\delta $ which are compatible with the topology 
of $ X $ in a suitable sense. Note that if $ \sigma = (\Hc_\sigma, u^\sigma) $ is a quantum permutation of $ X_\delta $ in the sense of Definition \ref{defquantumpermutation}, then we 
obtain an associated  nondegenerate $ * $-homomorphism $ \alpha_\delta: C_0(X_\delta) \rightarrow M(K(\Hc_\sigma) \otimes C_0(X_\delta)) $ by setting 
$$ 
\alpha_\delta(e_x) = \sum_{y \in X_\delta} u^\sigma_{x, y} \otimes e_y, 
$$ 
where $ e_x $ denotes the characteristic function based at $ x $. Here $ K(\Hc_\sigma) $ denotes the algebra of compact operators on $ \Hc_\sigma $.  

\begin{definition} \label{qhomeo}
Let $ X $ be a locally compact space. A quantum permutation $ \sigma = (\Hc_\sigma, u^\sigma) $ of $ X_\delta $ is called continuous 
if there exists a nondegenerate $ * $-homomorphism $ \alpha: C_0(X) \rightarrow M(K(\Hc_\sigma) \otimes C_0(X)) $ making the diagram 
\begin{center}
\begin{tikzcd}
C_0(X) \arrow[r, "\alpha"]\arrow[d, swap, "can"] & M(K(\Hc_\sigma) \otimes C_0(X)) \arrow[d, "\id \otimes can"] \\
M(C_0(X_\delta)) \arrow[r, "\alpha_\delta"] & M(K(\Hc_\sigma) \otimes C_0(X_\delta)) 
\end{tikzcd}
\end{center}
commutative and $ [\alpha(C_0(X))(K(\Hc_\sigma) \otimes 1)] = K(\Hc_\sigma) \otimes C_0(X) $. \\
A quantum homeomorphism of $ X $ is a continuous quantum permutation $ \sigma $ of $ X_\delta $ such that its conjugate $ \overline{\sigma} $ is  
a continuous quantum permutation as well. 
\end{definition} 

Of course, if $ X $ carries the discrete topology then quantum homeomorphisms of $ X $ are nothing but quantum permutations in the sense of Definition \ref{defquantumpermutation}. 

For general $ X $, it follows from the fact that the map $ can $ is injective that continuity of $ (\Hc_\sigma, u^\sigma) $ is equivalent to saying that the image of $ C_0(X) $ under the map $ \alpha_\delta $ is contained in the subalgebra of 
$$ 
M(K(\Hc_\sigma) \otimes C_0(X)) = C_b(X, B(\Hc_\sigma)_{st}) \subset C_b(X_\delta, B(\Hc_\sigma)) = M(K(\Hc_\sigma) \otimes C_0(X_\delta))
$$ 
consisting of all $ g \in C_b(X, B(\Hc_\sigma)_{st}) $ such that $ g(T \otimes 1), (T \otimes 1)g \in C_0(X, K(\Hc_\sigma)) $ for all $ T \in K(\Hc_\sigma) $, and $ [\alpha_\delta(C_0(X))(K(\Hc_\sigma) \otimes 1)] = K(\Hc_\sigma) \otimes C_0(X) $. 
Here we write $ B(\Hc_\sigma)_{st} $ for $ B(\Hc_\sigma) $ equipped with the strict topology. 

Under additional assumptions the above description can be simplified. If $ \Hc_\sigma $ is finite dimensional 
then the strict topology on $ B(\Hc_\sigma) = K(\Hc_\sigma) $ agrees with the norm topology, and the target of $ \alpha $ becomes $ B(\Hc_\sigma) \otimes C_0(X) $. If in addition $ X $ is compact, then the top arrow in Definition \ref{qhomeo}
is a unital $ * $-homomorphism $ \alpha: C(X) \rightarrow B(\Hc_\sigma) \otimes C(X) \cong M_n(C(X)) $, where $ n = \dim(\Hc_\sigma) $. 

\begin{lemma} \label{checkingqhomeo}
Let $ X $ be a locally compact space and let $ \sigma = (\Hc_\sigma, u^\sigma) $ be a finite dimensional quantum permutation of $ X_\delta $. Then the 
following conditions are equivalent. 
\begin{bnum}
\item[a)] $ \sigma $ is a quantum homeomorphism. 
\item[b)] For every $ f \in C_0(X) $ the maps 
\begin{align*} 
X \ni y &\mapsto\sum_{x \in X} f(x) u^\sigma_{x,y} \in B(\Hc_\sigma) \\
X \ni x &\mapsto \sum_{y \in X} f(y) u^\sigma_{x,y} \in B(\Hc_\sigma) 
\end{align*} 
are contained in $ C_0(X, B(\Hc_\sigma)) = B(\Hc_\sigma) \otimes C_0(X) $. 
\end{bnum}
\end{lemma}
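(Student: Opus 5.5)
The plan is to unwind Definition \ref{qhomeo} in the finite dimensional case and match the two maps in b) with $\alpha_\delta$ applied to $f$, for $\sigma$ and for $\overline{\sigma}$ respectively.

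First, the identification. Since $\Hc_\sigma$ is finite dimensional, $K(\Hc_\sigma) = B(\Hc_\sigma)$ and
$$
M(B(\Hc_\sigma) \otimes C_0(X_\delta)) = C_b(X_\delta, B(\Hc_\sigma)).
$$
For $f \in C_0(X)$ the element $can(f) = \sum_x f(x) e_x$ converges strictly in $C_b(X_\delta)$. Because $\alpha_\delta$ is nondegenerate, it is strictly continuous on bounded sets, so
$$
\alpha_\delta(can(f)) = \sum_{x} f(x) \sum_{y} u^\sigma_{x,y} \otimes e_y .
$$
This is the function $y \mapsto \sum_x f(x) u^\sigma_{x,y}$. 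The inner sums converge strongly, and hence in norm since $\dim \Hc_\sigma < \infty$, because the $u^\sigma_{x,y}$ for fixed $y$ are orthogonal projections and $f$ is bounded.

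Next, continuity of $\sigma$ in terms of the first map. Since $B(\Hc_\sigma)$ is unital, $M(B(\Hc_\sigma) \otimes C_0(X)) = C_b(X, B(\Hc_\sigma))$, and $\id \otimes can$ is the inclusion into $C_b(X_\delta, B(\Hc_\sigma))$. A map $\alpha$ making the diagram commute exists iff $\alpha_\delta(can(f))$ lies in $C_b(X, B(\Hc_\sigma))$ for all $f$; it is then unique by injectivity and automatically a $*$-homomorphism. The density condition $[\alpha(C_0(X))(B(\Hc_\sigma) \otimes 1)] = B(\Hc_\sigma) \otimes C_0(X)$ then needs to be related to the requirement that $\alpha(f) \in C_0(X, B(\Hc_\sigma))$.

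\begin{bnum}
\item[$\bullet$] Density implies $\alpha(f) = \alpha(f)(1 \otimes 1) \in B(\Hc_\sigma) \otimes C_0(X)$, so the first map in b) lies in $C_0(X, B(\Hc_\sigma))$.
\item[$\bullet$] Conversely, suppose $\alpha(C_0(X)) \subset C_0(X, B(\Hc_\sigma))$. Then $[\alpha(C_0(X))(B(\Hc_\sigma) \otimes 1)]$ is a closed subspace. It is a $C_0(X)$-module? Not obviously, so this is where the real work lies.
\end{bnum}

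This converse is the step I expect to be the main obstacle. The plan is to use nondegeneracy of $\alpha$ together with the Podle\'s-type density argument. By nondegeneracy, $\alpha(C_0(X))(B(\Hc_\sigma) \otimes C_0(X))$ is dense. One would then try to show that the closed span $D = [\alpha(C_0(X))(B(\Hc_\sigma) \otimes 1)]$ is a closed right ideal containing enough elements.

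A concrete route is pointwise: for each $y \in X$, pick $x$ with $u^\sigma_{x,y} \neq 0$. Choosing $f$ supported near $x$ shows that the evaluations at $y$ of elements of $D$ span $B(\Hc_\sigma)$, using that $\sum_x u^\sigma_{x,y} = 1$. In finite dimensions only finitely many $x$ contribute for each $y$. One then shows $D$ is a $C_0(X)$-submodule, which would give $D = C_0(X, B(\Hc_\sigma))$ by a partition of unity argument. Closure under right multiplication by $1 \otimes g$ should come from $\alpha(f)\alpha(h)$ and the commutation of $1 \otimes g$ with $B(\Hc_\sigma) \otimes 1$. If the module property proves hard, I would fall back on arguing directly with the finite-dimensional row/column structure, where each fibre is spanned by products of the $u^\sigma_{x,y}$.

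Finally, the conjugate. By Definition \ref{def_constructions} c), $u^{\overline{\sigma}}_{x,y}\overline{\xi} = \overline{u^\sigma_{y,x}\xi}$. Applying the previous step to $\overline{\sigma}$, continuity of $\overline{\sigma}$ is equivalent to $x \mapsto \sum_y f(y) u^{\overline{\sigma}}_{y,x}$ lying in $C_0(X, B(\overline{\Hc_\sigma}))$. Conjugation $T \mapsto \overline{T}$ is an isometric antilinear bijection $B(\Hc_\sigma) \to B(\overline{\Hc_\sigma})$, and it is applied entrywise with $f$ replaced by $\overline{f}$. So this condition is exactly the second map in b). Combining the two steps gives a) $\Leftrightarrow$ b).
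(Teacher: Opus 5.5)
Your direction a) $\Rightarrow$ b) is fine and matches the paper: density gives $\alpha(f)=\alpha(f)(1\otimes 1)\in B(\Hc_\sigma)\otimes C_0(X)$, and the second map is handled by passing to $\overline{\sigma}$ through an isometric (anti)linear identification. The gap is in b) $\Rightarrow$ a), at exactly the step you flag. You try to get the density condition $[\alpha(C_0(X))(B(\Hc_\sigma)\otimes 1)]=B(\Hc_\sigma)\otimes C_0(X)$ for $\sigma$ from the first map alone, and density for $\overline{\sigma}$ from the second map alone. None of the mechanisms you propose delivers this.

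Pointwise fullness of $D=[\alpha(C_0(X))(B(\Hc_\sigma)\otimes 1)]$ in each fibre is true, but it is useless without the $C_0(X)$-module property. The relations you cite ($\alpha(f)\alpha(h)=\alpha(fh)$, and $1\otimes g$ commuting with $B(\Hc_\sigma)\otimes 1$) do not give that property, because nothing in them rewrites $1\otimes g$ in terms of $\alpha$. For compact $X$ you have $1\otimes 1=\alpha(1)\in D$, so ``$D$ is a right $C(X)$-module'' is already equivalent to $1\otimes C(X)\subset D$, i.e.\ to the conclusion itself. The argument is therefore circular. A Podle\'s-type trick is also unavailable, since there is no coproduct here to exploit.

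The missing idea is to couple the two halves of b). Fix $f\in C_0(X)$ and an orthonormal basis $e_1,\dots,e_n$ of $\Hc_\sigma$. The second map shows that $f_{i,j}(x)=\sum_z f(z)\langle e_i,u^\sigma_{x,z}e_j\rangle$ lies in $C_0(X)$. Using $u^\sigma_{x,y}u^\sigma_{x,z}=\delta_{y,z}u^\sigma_{x,y}$ and $\sum_x u^\sigma_{x,y}=1$, one then gets
\[
\sum_{i,j}\alpha(f_{i,j})(y)\,|e_i\rangle\langle e_j| \;=\; \sum_{x,z} f(z)\,u^\sigma_{x,y}u^\sigma_{x,z} \;=\; f(y)1 .
\]
Thus $1\otimes f=\sum_{i,j}\alpha(f_{i,j})(|e_i\rangle\langle e_j|\otimes 1)\in D$, and since $D$ is stable under right multiplication by $B(\Hc_\sigma)\otimes 1$, density follows. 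The same computation with the two maps exchanged handles $\overline{\sigma}$. So the hypothesis you assign to $\overline{\sigma}$ is what yields density for $\sigma$, and vice versa.

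Your decoupled target is not false: for finite dimensional $\Hc_\sigma$, a compactness argument on row supports shows that the first condition already forces the second. But that is a further nontrivial argument which your proposal does not contain.
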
 

\begin{proof} 
$ a) \Rightarrow b) $ By assumption, $ \alpha(f)(y) = \sum_x f(x) u^\sigma_{x,y} $ defines an element $ \alpha(f) \in C_0(X, B(\Hc_\sigma)) $ for every $ f \in C_0(X) $. Applying the same argument to $ \overline{\sigma} $ yields the claim for the second map, using the $ * $-anti-isomorphism $ j: B(\Hc_\sigma) \rightarrow B(\overline{\Hc_\sigma}) $ given by $ j(T) (\overline{\xi}) = \overline{T^* \xi} $. \\
$ b) \Rightarrow a) $ The condition for the first map in $ b) $ means that we obtain a well-defined $ * $-homomorphism $ \alpha: C_0(X) \rightarrow B(\Hc_\sigma) \otimes C_0(X) = C_0(X, B(\Hc_\sigma)) $ 
given by the formula $ \alpha(f)(y) = \sum_x f(x) u^\sigma_{x,y} $. In order to show $ [\alpha(C_0(X)) (B(\Hc_\sigma) \otimes 1)] = B(\Hc_\sigma) \otimes C_0(X) $, it clearly suffices to verify $ 1 \otimes C_0(X) \subset [\alpha(C_0(X)) (B(\Hc_\sigma) \otimes 1)] $. Let $ f \in C_0(X) $ and choose an orthonormal basis $ e_1, \dots, e_n $ of $ \Hc_\sigma $. The condition for the second map in $ b) $ implies that  
$$
f_{i,j}(x) = \sum_z f(z) \bra e_i, u^\sigma_{x,z} e_j \ket
$$
is contained in $ C_0(X) $ for all $ 1 \leq i,j \leq n $. We calculate 
\begin{align*}
\sum_{i,j} \alpha(f_{i,j})(y) |e_i \ket \bra e_j| &= \sum_{i,j} \sum_x f_{i,j}(x) u^\sigma_{x,y} |e_i \ket \bra e_j| \\
&= \sum_{i,j} \sum_{x,z} f(z) \bra e_i, u^\sigma_{x,z} e_j \ket u^\sigma_{x,y} |e_i \ket \bra e_j| \\
&= \sum_j \sum_{x,z} f(z) u^\sigma_{x,y} u^\sigma_{x,z} |e_j \ket \bra e_j| \\
&= \sum_{x} f(y) u^\sigma_{x,y} = f(y) 1,  
\end{align*}
which yields the claim. It follows that $ \sigma $ is continuous. 

For the conjugate quantum permutation $ \overline{\sigma} $ one argues in a completely analogous way. 
\end{proof} 

Using Lemma \ref{checkingqhomeo} it is straightforward to check that a one-dimensional quantum permutation of $ X_\delta $ is a quantum homeomorphism of $ X $ iff the corresponding classical permutation is a homeomorphism of $ X $ in the usual sense. 
We note that the maps $ x \mapsto u^\sigma_{x,y} $ and $ x \mapsto u^\sigma_{y,x} $ for fixed $ y \in X $ typically fail to be (strictly) continuous for a quantum homeomorphism $ \sigma = (\Hc_\sigma, u^\sigma) $, even if $ \Hc_\sigma $ is finite dimensional. 

\begin{prop} \label{qhomeocategory}
Let $ X $ be a locally compact space. The collection of all quantum homeomorphisms of $ X $ defines a full $ C^* $-tensor subcategory $ \HOMEO^+(X) $ 
of the category $ \PERM^+(X_\delta) $ of quantum permutations of $ X_\delta $. 
\end{prop}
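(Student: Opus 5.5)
To prove Proposition \ref{qhomeocategory}, I would show that the class of quantum homeomorphisms contains the tensor unit and is closed under direct sums, tensor products, conjugates, and splitting of projections inside $ \PERM^+(X_\delta) $. The morphisms are simply all intertwiners, so fullness is by definition. The associativity and unit constraints are then inherited from $ \PERM^+(X_\delta) $, and the $ C^* $-axioms hold automatically for a full subcategory. Closure under conjugation is built into the definition: $ \sigma $ is a quantum homeomorphism iff both $ \sigma $ and $ \overline{\sigma} $ are continuous, and $ \overline{\overline{\sigma}} \cong \sigma $. It therefore suffices to prove that \emph{continuous} quantum permutations are closed under the remaining operations, and that $ \overline{\sigma \otimes \tau} \cong \overline{\tau} \otimes \overline{\sigma} $ and $ \overline{\bigoplus \sigma_i} \cong \bigoplus \overline{\sigma_i} $ as quantum permutations. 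These two isomorphisms are a direct check using the formula $ u^{\overline{\sigma}}_{x,y}(\overline{\xi}) = \overline{u^\sigma_{y,x}(\xi)} $.

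The unit object is the trivial one-dimensional quantum permutation $ u_{x,y} = \delta_{x,y} $. Here $ \alpha_\delta $ is just $ can $, so $ \alpha = \id $ works. For a finite direct sum $ \sigma \oplus \tau $, I would take $ \alpha_\sigma \oplus \alpha_\tau $, viewed in $ M(K(\Hc_\sigma \oplus \Hc_\tau) \otimes C_0(X)) $ via the block-diagonal inclusion. Commutativity of the square is clear. The density condition follows because $ K(\Hc_\sigma \oplus \Hc_\tau) $ is spanned by the four corners, and the off-diagonal corners can be reached as $ \alpha(f)(T \otimes 1) $ with $ T $ an off-diagonal compact, using the density statements for $ \sigma $ and $ \tau $. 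For a subobject, given by a projection $ p \in \PERM^+(X_\delta)(\sigma,\sigma) $, the operator $ p \otimes 1 $ commutes with $ \alpha_\delta(C_0(X)) $. I would then set $ \alpha_p(f) = (p \otimes 1)\alpha(f)(p \otimes 1) $, regarded in $ M(K(p\Hc_\sigma) \otimes C_0(X)) $. This is a $ * $-homomorphism and satisfies the diagram, and density follows by compressing $ [\alpha(C_0(X))(K \otimes 1)] = K \otimes C_0(X) $ by $ p $ on both sides.

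The main obstacle is the tensor product. In that case $ \alpha_\delta^{\sigma \otimes \tau} = (\alpha^\sigma_\delta \otimes \id)\alpha^\tau_\delta $ with the legs ordered as $ K(\Hc_\sigma) \otimes K(\Hc_\tau) \otimes C_0(X_\delta) $, suitably flipped. Indeed,
$$
\sum_y (u^\sigma \tp u^\tau)_{x,y} \otimes e_y = \sum_{t,y} u^\sigma_{x,t} \otimes u^\tau_{t,y} \otimes e_y ,
$$
and this matches applying $ \alpha^\sigma_\delta $ to $ e_x $ and then $ \alpha^\tau_\delta $ to the $ e_t $-leg. I would define $ \alpha = (\id_{K(\Hc_\sigma)} \otimes \alpha^\tau)\alpha^\sigma $. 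Making this composite well defined requires extending $ \id \otimes \alpha^\tau $ to multipliers, which in turn requires nondegeneracy. That nondegeneracy is exactly what the density condition $ [\alpha^\tau(C_0(X))(K(\Hc_\tau) \otimes 1)] = K(\Hc_\tau) \otimes C_0(X) $ provides. Commutativity of the square then follows from the corresponding identity on the discrete level, since $ can $ is injective and all maps are strictly continuous. The density condition for the composite is obtained by the standard chain
$$
[\alpha(C_0(X))(K_\sigma \otimes K_\tau \otimes 1)] = [(\id \otimes \alpha^\tau)\big(\alpha^\sigma(C_0(X))(K_\sigma \otimes 1)\big)(1 \otimes K_\tau \otimes 1)] ,
$$
which uses density for $ \alpha^\sigma $ and then for $ \alpha^\tau $. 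This is the step where I expect the technical care to be needed, since one has to justify moving the factor $ K_\tau $ past the extension to multipliers.

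Finally, infinite direct sums would be handled like finite ones, using that $ K(\bigoplus \Hc_i) $ is the closed span of the corner blocks $ K(\Hc_i,\Hc_j) $. The subcategory then inherits subobject completeness and additivity, which gives the claim.
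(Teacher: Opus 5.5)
Your proposal is correct and follows the paper's proof. The paper also takes $\gamma = (\id \otimes \beta)\alpha$ for the tensor product with exactly your density chain, gets the conjugate condition by running the same argument on the conjugates (your identity $\overline{\sigma \otimes \tau} \cong \overline{\tau} \otimes \overline{\sigma}$ makes this explicit), and leaves the unit, direct sums and subobjects as routine checks, which you fill in correctly.

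Two small points. First, $(\alpha^\sigma_\delta \otimes \id)\alpha^\tau_\delta$ ``suitably flipped'' would produce $\tau \otimes \sigma$; the composite you actually compute and use is $(\id \otimes \alpha^\tau_\delta)\alpha^\sigma_\delta$. Second, for subobjects you should note that $\overline{p}$ is a projection in the commutant of $u^{\overline{\sigma}}$ that cuts out the conjugate of the subobject, so its continuity follows from the same compression argument.
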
 

\begin{proof} 
Let us verify that the tensor product of two continuous quantum permutations $ \sigma = (\Hc_\sigma, u^\sigma), \tau = (\Hc_\tau, u^\tau) $ of $ X $ is again continuous. 
By assumption, there exist nondegenerate $ * $-homomorphisms $ \alpha: C_0(X) \rightarrow M(K(\Hc_\sigma) \otimes C_0(X)), \beta: C_0(X) \rightarrow M(K(\Hc_\tau) \otimes C_0(X)) $ 
with the properties in Definition \ref{qhomeo}. Then $ \gamma = (\id \otimes \beta)\alpha $ is a nondegenerate $ * $-homomorphism from $ C_0(X) $ into $ M(K(\Hc_\sigma) \otimes K(\Hc_\tau) \otimes C_0(X)) $. Noting that 
$$
\gamma(f)(z) = (\id \otimes \beta)\alpha(f)(z)  
=\sum_{x,y \in X} f(x) u^\sigma_{x, y} \otimes u^\tau_{y, z} 
$$
for $ f \in C_0(X) $ and 
\begin{align*}
[\gamma(C_0(X)) &(K(\Hc_\sigma \otimes \Hc_\tau) \otimes 1)] = [\gamma(C_0(X)) (K(\Hc_\sigma) \otimes K(\Hc_\tau) \otimes 1)] \\
&= [(\id \otimes \beta)(\alpha(C_0(X)) (K(\Hc_\sigma) \otimes 1))(1 \otimes K(\Hc_\tau) \otimes 1)] \\
&= [(\id \otimes \beta)(K(\Hc_\sigma) \otimes C_0(X))(1 \otimes K(\Hc_\tau) \otimes 1)] \\
&= K(\Hc_\sigma) \otimes K(\Hc_\tau) \otimes C_0(X) = K(\Hc_\sigma \otimes \Hc_\tau) \otimes C_0(X),
\end{align*}
we see that $ \gamma $ provides the required $ * $-homomorphism for $ \sigma \tp \tau $. 
Applying this argument to a pair of quantum homeomorphisms and their conjugates shows that the class of quantum homeomorphisms is preserved under tensor products. 

The identity quantum permutation of $ X $ is clearly a quantum homeomorphism. 
Similarly, it is not hard to check that direct sums and subobjects of quantum homeomorphisms are again quantum homeomorphisms.
\end{proof} 

By definition, the conjugate of a quantum homeomorphism is again a quantum homeomorphism. 
Hence it follows from Proposition \ref{qhomeocategory} that the full subcategory $ \Homeo^+(X) $ of $ \HOMEO^+(X) $ consisting of all finite dimensional quantum homeomorphisms is a rigid $ C^* $-tensor category. In combination with the Tannaka-Krein theorem \ref{TK}, this means that $ \Homeo^+(X) $ determines a quantum subgroup $ \Qut_\delta(X) \subset \Sym^+(X_\delta) $. 

\begin{definition} 
Let $ X $ be a locally compact space. We call $ \Qut_\delta(X) $ the quantum homeomorphism group of $ X $. 
\end{definition} 

As already indicated above, the discrete quantum group $ \Qut_\delta(X) $ contains the group $ \Homeo(X) $ of classical homeomorphisms as a quantum subgroup. 

At this stage, given a locally compact space $ X $, it is natural to ask whether there exist quantum homeomorphisms which are not classical, that is, whether $ X $ has quantum symmetry. The following argument, modelled on Proposition \ref{disjointautomorphisms}, shows that there is typically an 
abundance of non-classical quantum homeomorphisms. 
Let us say that two classical homeomorphisms $ \sigma, \tau $ of a locally compact space $ X $ are \emph{disjoint} if $ \sigma(x) \neq x \implies \tau(x) = x $, or equivalently, $ \tau(x) \neq x \implies \sigma(x) = x $ for all $ x \in X $. 

\begin{prop} \label{disjointhomeomorphisms}
Let $ X $ be a locally compact space admitting a pair of disjoint homeomorphisms $ \sigma, \tau \in \Homeo(X) $, and assume that $ k \in \mathbb{N} $ does not exceed the order
of either of them. Then $ X $ admits an irreducible quantum homeomorphism of dimension $ k $. In particular, if $ \Homeo(X) $ contains a pair of nontrivial 
disjoint homeomorphisms then $ X $ has quantum symmetry. 
\end{prop}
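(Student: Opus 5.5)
We reuse the construction from the proof of Proposition \ref{disjointautomorphisms} verbatim, with $ X_\delta $ in place of $ V_X $. Let $ G = \mathbb{Z}/k\mathbb{Z} $, let $ p_1, \dots, p_k $ and $ q_1, \dots, q_k $ be the minimal projections of $ C(G) $ and $ C^*(G) $ acting on $ l^2(G) = \mathbb{C}^k $, and set
$$
u^\rho_{x,y} = \sum_{r = 1}^k \delta_{x,\sigma^r(y)} p_r + \sum_{s = 1}^k \delta_{x,\tau^s(y)} q_s - \delta_{x,y} 1
$$
for $ x,y \in X $. The computations in the proof of Proposition \ref{disjointautomorphisms} are purely set-theoretic; they do not use the graph structure. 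They therefore show that $ \rho = (\mathbb{C}^k, u^\rho) $ is a quantum permutation of $ X_\delta $. The irreducibility argument also carries over unchanged. It only uses the orbit structure of $ \sigma $ and $ \tau $ to find vertices $ v_1, \dots, v_a $ and $ w_1, \dots, w_b $ with $ u^\rho_{\sigma^r(v_1),v_1} \cdots u^\rho_{\sigma^r(v_a),v_a} = p_r $ and similarly for $ q_r $. Since the $ p_i, q_j $ generate $ M_k(\mathbb{C}) $, $ \rho $ is irreducible.

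The new point is continuity, which we check with Lemma \ref{checkingqhomeo}. Because $ \rho $ is finite dimensional, it suffices to show that for each $ f \in C_0(X) $ both maps
$$
y \mapsto \sum_x f(x) u^\rho_{x,y}, \qquad x \mapsto \sum_y f(y) u^\rho_{x,y}
$$
lie in $ C_0(X, M_k(\mathbb{C})) $. The Kronecker deltas collapse the sums, giving
$$
\sum_x f(x) u^\rho_{x,y} = \sum_{r=1}^k f(\sigma^r(y)) p_r + \sum_{s=1}^k f(\tau^s(y)) q_s - f(y) 1 .
$$
Each function $ f \circ \sigma^r $, $ f \circ \tau^s $ and $ f $ lies in $ C_0(X) $, since $ \sigma $ and $ \tau $ are homeomorphisms; composition with a homeomorphism preserves both continuity and vanishing at infinity. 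The right hand side is therefore a finite sum of elements of $ C_0(X) \otimes M_k(\mathbb{C}) $. Similarly,
$$
\sum_y f(y) u^\rho_{x,y} = \sum_r f(\sigma^{-r}(x)) p_r + \sum_s f(\tau^{-s}(x)) q_s - f(x) 1 ,
$$
which is in $ C_0(X, M_k(\mathbb{C})) $ for the same reason. Lemma \ref{checkingqhomeo} then shows that $ \rho $ is a quantum homeomorphism.

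The main subtlety is hidden in this collapse. The pointwise identity of the two expressions for $ \sum_x f(x) u^\rho_{x,y} $ is fine, but we must make sure the lemma only needs the resulting function to be continuous, not each $ u^\rho_{x,y} $ separately; it does, and indeed the individual entries are typically discontinuous in $ x $. There is also a mild care point in the case of infinite order. There $ \sigma^r $ for $ 1 \le r \le k $ is not periodic, but only finitely many powers appear in the formula, so nothing changes.

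Finally, if $ \sigma $ and $ \tau $ are nontrivial they both have order at least $ 2 $, so we may take $ k = 2 $. This produces an irreducible, hence non-classical, quantum homeomorphism of dimension $ 2 $, so $ X $ has quantum symmetry.
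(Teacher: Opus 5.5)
Your proposal is correct and is the paper's proof. The paper likewise reuses the construction and irreducibility argument of Proposition~\ref{disjointautomorphisms} on $X_\delta$, then checks the quantum homeomorphism condition via Lemma~\ref{checkingqhomeo} by collapsing the Kronecker deltas to exactly your two expressions in $f\circ\sigma^{\pm r}$, $f\circ\tau^{\pm s}$ and $f$; your remarks that these compositions lie in $C_0(X)$ and that taking $k=2$ gives the ``in particular'' statement only spell out what the paper leaves implicit.
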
 

\begin{proof} 
Using the same notation and arguments as in the proof of Proposition \ref{disjointautomorphisms} we obtain an irreducible quantum permutation $ u^\rho $ 
of $ X_\delta $ of dimension $ k $ by setting 
$$ 
u^\rho_{x,y} = \sum_{r = 1}^k \delta_{x, \sigma^r(y)} p_r + \sum_{s = 1}^k \delta_{x, \tau^s(y)} q_s - \delta_{x,y} 1
$$ 
for $ x, y \in X $. The only thing that needs to be checked is that $ u^\rho $ is a quantum homeomorphism. To this end let $ f \in C_0(X) $ and note that 
\begin{align*}
\sum_{y \in X} f(y) u^\rho_{x,y} &= \sum_{y \in X} f(y) \bigg(\sum_{r = 1}^k \delta_{x, \sigma^r(y)} p_r + \sum_{s = 1}^k \delta_{x, \tau^s(y)} q_s - \delta_{x,y} 1 \biggr) \\
&= \sum_{r = 1}^k f(\sigma^{-r}(x)) p_r + \sum_{s = 1}^k f(\tau^{-s}(x)) q_s - f(x) 1, 
\end{align*}
and similarly 
\begin{align*}
\sum_{x \in X} f(x) u^\rho_{x,y} &= \sum_{r = 1}^k f(\sigma^r(y)) p_r + \sum_{s = 1}^k f(\tau^s(y)) q_s - f(y) 1. 
\end{align*}
Hence the claim follows from Lemma \ref{checkingqhomeo}.  
\end{proof} 

Proposition \ref{disjointautomorphisms} allows one to exhibit quantum symmetries in various concrete situations. Consider for instance the circle $ X = S^1 $,  
and let $ \tau, \sigma \in \Homeo(S^1) $ be homeomorphisms which move opposite halves of the circle. Concretely, upon viewing $ S^1 $ as the unit interval with the endpoints 
identified, we can take $ \tau $ and $ \sigma $ to be the piecewise linear homeomorphisms 
\begin{align*}
\sigma(x) &=
\begin{cases}
\frac{1}{2}x & \text{if } 0 \leq x \leq \frac{1}{4} \\
\frac{3}{2}x - \frac{1}{4} & \text{if } \frac{1}{4} \leq x \leq \frac{1}{2} \\
x & \text{if } \frac{1}{2} \leq x \leq 1 \\
\end{cases}
\\
\tau(x) &=
\begin{cases}
x & \text{if } 0 \leq x \leq \frac{1}{2} \\
\frac{1}{2}x + \frac{1}{4} & \text{if } \frac{1}{2} \leq x \leq \frac{3}{4} \\
\frac{3}{2}x - \frac{1}{2} & \text{if } \frac{3}{4} \leq x \leq 1
\end{cases}
\end{align*}
which may be visualised as in Figure \ref{disjoint}. 

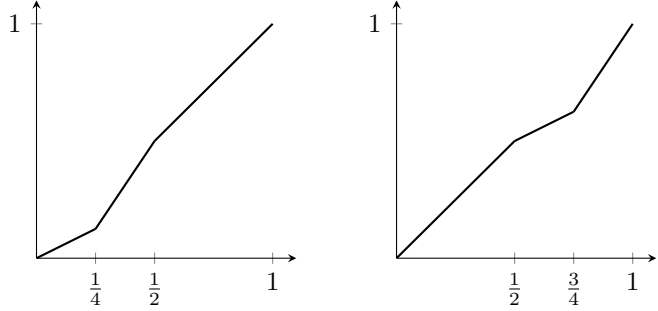
\begin{figure}[ht]
\centering
\begin{tikzpicture}
  \begin{axis}[
      axis lines = middle,
      xlabel = $$, ylabel = $$,
      xmin=0, xmax=1.1,
      ymin=0, ymax=1.1,
      samples=100,
      width=5cm,
      height=5cm,
      grid=none,
      xtick={0, 0.25, 0.5,1}, 
      xticklabels={$0$, $\frac{1}{4}$, $\frac{1}{2}$, $1$}, 
      ytick={1}, 
      yticklabels={1}, 
      yticklabel style={left}, 
     ]
    \addplot[black, thick, domain=0:0.25] {0.5* x};
    \addplot[black, thick, domain=0.25:0.5] {1.5*x - 0.25};
    \addplot[black, thick, domain=0.5:1] {x};
  \end{axis}
\end{tikzpicture}
\qquad
\begin{tikzpicture}
  \begin{axis}[
      axis lines = middle,
      xlabel = $$, ylabel = $$,
      xmin=0, xmax=1.1,
      ymin=0, ymax=1.1,
      samples=100,
      width=5cm,
      height=5cm,
      grid=none,
      xtick={0, 0.5, 0.75,1}, 
      xticklabels={$0$, $\frac{1}{2}$, $\frac{3}{4}$, $1$}, 
      ytick={1}, 
      yticklabels={1}, 
      yticklabel style={left}, 
    ]
    \addplot[black, thick, domain=0:0.5] {x};
    \addplot[black, thick, domain=0.5:0.75] {0.5*x + 0.25};
    \addplot[black, thick, domain=0.75:1] {1.5*x - 0.5};
  \end{axis}
\end{tikzpicture}
\caption{The homeomorphisms $ \sigma $ and $ \tau $} 
\label{disjoint}
\end{figure} 

Using similar arguments one finds that ``almost all'' locally compact spaces have quantum symmetry in the sense described above. In particular, every manifold of dimension greater than $ 0 $ has quantum symmetry. 

This is in marked contrast to a result by Goswami \cite{GOSWAMI_nonexistence} which asserts that connected Riemannian manifolds do not 
admit non-classical quantum \emph{isometries}, that is, quantum symmetries which are smooth and compatible with the Riemannian metric in a suitable sense. 
For the definition of the quantum isometry group of a Riemannian manifold and more background we refer the reader to \cite{GOSWAMI_nonexistence} and the references therein.

\subsection{Covering spaces} 

We have seen above that the quantum homeomorphism group of a locally compact space is typically huge, and it is not realistic to aim for an explicit description of all quantum homeomorphisms. However, in a similar way as in our discussion in section \ref{sec_infinite}, interesting questions arise once we add further structure to the picture. 

Recall that a \emph{covering space} is a pair of topological spaces $ E,X $ together with a continuous map $ p: E \rightarrow X $ such that each point $ x \in X $ 
has an open neighborhood $ U $ such that $ E_{|U} = p^{-1}(U) \cong \bigsqcup_{i \in I} U $ identifies with a disjoint union of copies of $ U $ and the restriction $ p_{|U}: E_{|U} \rightarrow U $ 
of $ p $ to $ E_{|U} $ is the canonical projection,  see for instance \cite[Section 1.3]{HATCHER_book}. We will sometimes use the short-hand notation $ E|X $ for a covering space, suppressing the covering map $ p $. 

A \emph{deck transformation} of $ p: E \rightarrow X $ is a homeomorphism $ T: E \rightarrow E $ such that $ p \circ T = p $. 
In the sequel we will always assume that $ X $ is connected and locally compact. In this case, the total space $ E $ is locally compact as well, 
and all fibers $ E_x = p^{-1}(x) $ for $ x \in X $ have the same cardinality, called the \emph{number of sheets} of $ E|X $. 

\begin{definition} 
Let $ p: E \rightarrow X $ be a covering space. A quantum homeomorphism $ \sigma = (\Hc, u) $ of $ E $ is called a quantum deck transformation 
if $ u_{x, y} = 0 $ unless $ p(x) = p(y) $. 
\end{definition} 

Every quantum deck transformation $ \sigma $ induces quantum permutations $ \sigma_{|x} $ of the fibres $ E_x $ for $ x \in X $, and  
restricts to a quantum homeomorphism $ \sigma_{|U}: E_{|U} \rightarrow E_{|U} $ for every open set $ U \subset X $. 

The \emph{tautological covering} of $ X $ is given by $ E = X $ and $ p = \id $, and the following observation follows directly from the definition. 

\begin{example} \label{trivialcovering}
Every quantum deck transformation $ (\Hc, u) $ of a tautological covering is of the form $ u_{x,y} = \delta_{x,y} \id $. 
\end{example} 

More generally, a covering of the form $ E = X \times I \rightarrow X $, where $ I $ is some index set the covering map given by projecting onto the first factor, is called the \emph{trivial covering} of $ X $ with fibre $ I $. 

\begin{example} \label{exconstant}
Consider the trivial covering $ E = X \times I \rightarrow X $ with fibre $ I $. Then every quantum permutation $ (\Hc, u) $ of $ I $ induces a quantum deck transformation $ (\Hc, u^E) $ of $ E|X $ by setting 
$$
u^E_{(x, i), (y,j)} = \delta_{x,y} u_{i,j}
$$
for $ (x,i), (y,j) \in E $. We shall call quantum deck transformations of this form constant.
\end{example}

In contrast to the case of classical deck transformations, not every quantum deck transformation of a trivial covering needs to be constant, even if all its restrictions to the fibres are classical quantum permutations. Before explaining this, let us discuss a useful criterion characterising finite dimensional quantum deck transformations. 

\begin{lemma} \label{checkingquantumdeck}
Let $ p: E \rightarrow X $ be a covering space and let $ \sigma = (\Hc, u) $ be a finite dimensional quantum permutation of $ E_\delta $ satisfying $ u_{x, y} = 0 $ unless $ p(x) = p(y) $. 
Then  $ \sigma $ is a quantum deck transformation iff for every $ x \in X $ there exists an open neighborhood $ U $ 
such that $ E_{|U} \cong \bigsqcup_{i \in I} U = U \times I $ is trivial and the maps 
\begin{align*}
U \ni y \mapsto u_{(y,i)(y,j)} \in B(\Hc) \\
\end{align*}
are continuous for all $ i,j \in I $. 
\end{lemma}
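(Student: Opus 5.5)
We reduce everything to Lemma \ref{checkingqhomeo}. By definition, and since $\sigma$ already satisfies $u_{x,y}=0$ unless $p(x)=p(y)$, $\sigma$ is a quantum deck transformation iff it is a quantum homeomorphism of $E$. By Lemma \ref{checkingqhomeo}, this means that for every $f\in C_0(E)$ the maps
$$
F_f(e) = \sum_{e'} f(e')\, u_{e',e}, \qquad G_f(e)=\sum_{e'} f(e')\, u_{e,e'}
$$
lie in $C_0(E,B(\Hc))$. Because of the support condition, both sums only run over $e'$ in the fibre $E_{p(e)}$.

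\emph{Local reformulation.} The first step is to localise. Fix $x\in X$ and an evenly covered connected neighbourhood $U$ with $E_{|U}\cong U\times I$. For $f$ supported in a single sheet, say $f(y,i)=g(y)$ with $g\in C_c(U)$ and zero elsewhere, we get
$$
F_f(y,j)=g(y)\,u_{(y,i),(y,j)}, \qquad G_f(y,j)=g(y)\,u_{(y,j),(y,i)}.
$$
Outside $E_{|U}$ both maps vanish. This gives the direction ``$\Rightarrow$'': continuity of $F_f$ for all such $g$ forces $y\mapsto u_{(y,i),(y,j)}$ to be continuous on $U$, by choosing $g$ equal to $1$ near a given point.

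\emph{The converse.} Assume the local continuity. By a partition of unity argument, and since $C_0$ is the closure of compactly supported functions, it suffices to treat $f\in C_c(E)$ supported in a set $p^{-1}(V)\cap(V\times\{i\})$ with $V\subset U$ relatively compact. Since $f$ is compactly supported, it can be written as a finite sum of such pieces using finitely many sheets.

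For such $f$ the formulas above show that $F_f$ and $G_f$ are continuous on $E_{|U}$ and compactly supported. We still need that $G_f$ involves the maps $y\mapsto u_{(y,j),(y,i)}$, which is covered by the hypothesis with $i,j$ swapped. Since $F_f$ and $G_f$ vanish outside the compact set $\operatorname{supp}(g)\times\{i\}\subset E_{|U}$, continuity on all of $E$ follows. Finally, uniform limits preserve membership in $C_0(E,B(\Hc))$, because $\|F_f\|_\infty\le\|f\|_\infty$ by the row and column partition of unity.

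\emph{Main obstacle.} The delicate point is the bound $\|F_f\|_\infty\le\|f\|_\infty$ and the reduction to compactly supported $f$ when the fibre $I$ is infinite. I would handle this by noting that, for fixed $e$, $F_f(e)$ is a combination of pairwise orthogonal projections with coefficients bounded by $\|f\|_\infty$, so its norm is at most $\|f\|_\infty$.

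There is one remaining subtlety. Compactness of $\operatorname{supp} f$ only guarantees finitely many sheets locally over a compact piece of $X$, not a single neighbourhood. I would cover $p(\operatorname{supp} f)$ by finitely many evenly covered sets and use a subordinate partition of unity on $X$, pulled back to $E$, to split $f$ into finitely many single-sheet pieces.
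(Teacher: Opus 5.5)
Your route is the paper's: reduce to Lemma \ref{checkingqhomeo}, get $\Rightarrow$ from a bump function on a single sheet, and get $\Leftarrow$ from the local formulas. Your density and partition-of-unity reduction to single-sheet $f$ is sound, and the bound $\|F_f\|_\infty\le\|f\|_\infty$ is correct.

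The step that fails is the claim that $F_f$ and $G_f$ vanish outside $\operatorname{supp}(g)\times\{i\}$. By your own formula, $F_f(y,j)=g(y)\,u_{(y,i),(y,j)}$. This is nonzero for $j\neq i$ whenever $\sigma$ mixes sheets; for instance $u_{(x,0),(x,1)}=\gamma(x)^*p_1\gamma(x)\neq 0$ in Example \ref{extrivialquantumdeck}. What is true is that $F_f$ and $G_f$ vanish outside $p^{-1}(\operatorname{supp} g)\cong\operatorname{supp}(g)\times I$. That set is closed in $E$ and contained in $E_{|U}$, so continuity on $E$ still follows. If $I$ is finite the set is compact, so the finite-sheeted case is repaired. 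But you explicitly allow infinitely many sheets, and then $\operatorname{supp}(g)\times I$ is not compact. You have not shown that $F_f$ and $G_f$ vanish at infinity, which Lemma \ref{checkingqhomeo} requires. The delicate point is this, not the norm bound you singled out.

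The missing ingredient is the finite dimensionality of $\Hc$, which your converse never actually uses.
\begin{itemize}
\item Fix $y_0\in U$. The projections $u_{(y_0,i),(y_0,j)}$, $j\in I$, are pairwise orthogonal with sum $1$. So the set $J_0$ of those $j$ with $u_{(y_0,i),(y_0,j)}\neq 0$ has at most $\dim\Hc$ elements.
\item The map $y\mapsto P(y)=\sum_{j\in J_0}u_{(y,i),(y,j)}$ is continuous and projection-valued, with $P(y_0)=1$.
\item A projection of norm $<1$ is zero, so $1-P(y)=0$ for $y$ near $y_0$. Hence $u_{(y,i),(y,j)}=0$ for all $j\notin J_0$ and $y$ near $y_0$.
\item Cover the compact set $\operatorname{supp}(g)$ by finitely many such neighbourhoods. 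This gives a finite $J\subset I$ such that $F_f$ is supported in the compact set $\operatorname{supp}(g)\times J$.
\item The same argument with columns in place of rows handles $G_f$.
\end{itemize}
With this inserted, your proof is complete. The paper's own converse is only a sketch and also passes over the vanishing-at-infinity condition, so the problem is not that you deviate from it. The problem is that your written justification for this step is false.
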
 

\begin{proof} 
Assume first that $ \sigma $ is a quantum deck transformation of $ E|X $. Moreover let $ x \in X $, and let $ V $ be an open neighborhood of $ x $ over which $ E $ is trivial. Pick an open neighborhood $ U \subset V $ of $ x $ such that the closure $ \overline{U} $ is compact and contained in $ V $. Then we can find a 
function $ f \in C_0(E) $ with support in the $ i $-th copy of $ V $ inside $ E_{|V} \cong V \times I $ such that $ f = 1 $ on $ U \subset V $. If we write $ \alpha: C_0(E) \rightarrow B(\Hc_\sigma) \otimes C_0(E) = C_0(E, B(\Hc_\sigma)) $ for the $ * $-homomorphism associated to $ \sigma $, then 
$ \alpha(f)(e) $ is nonzero only for $ e \in E_{|V} $, and we get $ \alpha(f)(y, j) = f(y,i) u_{(y,i),(y,j)} = u_{(y,i),(y,j)} $ for all $ y \in U $. It follows that $ U \ni y \mapsto u_{(y,i),(y,j)} $ is continuous 

Conversely, assume that the quantum permutation $ \sigma $ satisfies $ u_{x,y} = 0 $ unless $ p(x) = p(y) $, such that locally one obtains  
continuous maps $ U \ni y \mapsto u_{(y,i)(y,j)} \in B(\Hc) $. Then the maps 
$$ 
(y,j) \mapsto \sum_{i \in I} f(y,i) u_{(y,i), (y, j)}, \qquad (y,i) \mapsto \sum_{j \in I} f(y,j) u_{(y,i), (y, j)} 
$$ 
are continuous on $ E_{|U} $ for all $ f \in C_0(E) $, and it follows using Lemma \ref{checkingqhomeo} that $ (\Hc, u) $ is a quantum homeomorphism. 
\end{proof} 

Let us now give one of the most basic examples of a trivial covering which admits non-constant quantum deck transformations. 

\begin{example} \label{extrivialquantumdeck}
Let $ X = S^1 = \mathbb{R}/2 \pi \mathbb{Z} $, and let $ E = X \times \{0,1\} = X \times \mathbb{Z}/2 \mathbb{Z} $ be the trivial two-sheeted covering of $ X $. Moreover let $ p_0, p_1 \in M_2(\mathbb{C}) $ be the standard projections
\begin{align*}
p_0 &= \begin{pmatrix} 
1 & 0 \\
0 & 0
\end{pmatrix}, 
\qquad 
p_1 = \begin{pmatrix} 
0 & 0 \\
0 & 1
\end{pmatrix}, 
\end{align*}
and let $ \gamma: X \rightarrow U(2) $ be the map
$$
\gamma(t) = 
\begin{pmatrix} 
\cos(t) & \sin(t) \\
-\sin(t) & \cos(t) 
\end{pmatrix}.
$$
Using Lemma \ref{checkingquantumdeck}, it is easy to check that we obtain a $ 2 $-dimensional non-constant quantum deck transformation $ (\mathbb{C}^2, u) $ of $ E|X $ by defining 
$$
u_{(x,i),(y, j)} = \delta_{x,y} \gamma(x)^* p_{i - j} \gamma(x)
$$
for all $ x, y \in X, i,j \in \mathbb{Z}/2 \mathbb{Z} $. This quantum deck transformation is non-classical, since its restrictions to the fibres at $ a = 0 $ and $ b = \pi/4 $ are  
$$
u_{|a} = \begin{pmatrix} 
p_0 & p_1 \\
p_1 & p_0 
\end{pmatrix}, 
\qquad  
u_{|b} = \begin{pmatrix} 
p_+ & p_- \\
p_- & p_+ 
\end{pmatrix},
$$
where 
\begin{align*}
p_+ = \frac{1}{2} \begin{pmatrix} 
1 & 1 \\
1 & 1
\end{pmatrix}, 
\qquad 
p_- = 
\frac{1}{2} \begin{pmatrix} 
1 & -1 \\
-1 & 1
\end{pmatrix},
\end{align*} 
respectively. 
\end{example}

Note that the quantum deck transformation in Example \ref{extrivialquantumdeck} is classical in each fibre, while it fails to be classical globally. This phenomenon is not specific to trivial coverings, and we can easily modify the above construction to nontrivial coverings as well. 

\begin{example} \label{twosheeted}
Consider the nontrivial two-sheeted covering $ E $ of $ X = S^1 $, which we can view as the map $ p(z) = z^2 $ for $ z \in E = X = U(1) \subset \mathbb{C} $. 
Moreover let $ \gamma: X \cong \mathbb{R}/2 \pi \mathbb{Z} \rightarrow U(2) $ and $ p_0, p_1 $ as in Example \ref{extrivialquantumdeck}. 
Then we obtain a $ 2 $-dimensional quantum deck transformation $ (\mathbb{C}^2, u) $ of $ E $ by setting
$$
u_{e,f} = \gamma(p(e))^* (\delta_{e,f} p_0 \oplus \delta_{e, -f} p_1) \gamma(p(e))
$$
for $ e, f \in E = U(1) $. 
Note here that the restriction $ u_{|x} \in M_2(B(\mathbb{C}^2)) $ to the fibre $ E_x \cong \{0,1\} $ over any point $ x \in X $ is preserved under flipping both rows and columns at the same time, so that this construction is indeed compatible with the structure of the covering. 
\end{example} 

Although the quantum deck transformations obtained via the method in Examples \ref{extrivialquantumdeck} and \ref{twosheeted} are non-classical, they are clearly closely related to direct sums of classical deck transformations. 
In fact, they are \emph{deck equivalent} to classical quantum deck transformation in the following sense. 

\begin{definition} 
Let $ p: E \rightarrow X $ be a covering space and let $ \sigma = (\Hc_\sigma, u^\sigma), \tau = (\Hc_\tau, u^\tau) $ be quantum deck transformations of $ E|X $. 
A (unitary) deck intertwiner from $ \sigma $ to $ \tau $ is a strictly continuous bounded map $ T: X \rightarrow B(\Hc_\sigma, \Hc_\tau) $ such that ($ T_x $ is unitary for all $ x \in X $ and)
$$
T_{p(e)} u^\sigma_{e,f} = u^\tau_{e,f} T_{p(e)} 
$$
for all $ e, f \in E $. 
Two quantum deck transformations $ \sigma = (\Hc_\sigma, u^\sigma), \tau = (\Hc_\tau, u^\tau) $ 
are called deck equivalent if there exists a unitary deck intertwiner between them.  
\end{definition} 

By construction, the map $ \gamma $ in Examples \ref{extrivialquantumdeck} and \ref{twosheeted} implements unitary deck intertwiners between the given quantum deck transformations and classical quantum deck transformations. 

We obtain a $ C^* $-tensor category $ \DECK^+(E|X) $ consisting of all quantum deck transformations of $ E|X $ and deck intertwiners between them as morphisms. 
Inside this category we have the full $ C^* $-tensor subcategory $ \Deck^+(E|X) $ of finite dimensional quantum deck transformations, and this category is rigid. Following the same pattern as before, we can thus give the following definition. 

\begin{definition} \label{defqdeck}
The quantum deck transformation group $ \Qut_\delta(E|X) $ of a covering space $ E|X $ is the discrete quantum group associated to the rigid $ C^* $-tensor category $ \Deck^+(E|X) $ of finite dimensional quantum deck transformations of $ E|X $. 
\end{definition} 

In view of Lemma \ref{quantumpermutationssmalln} and our examples so far, one might wonder whether quantum deck transformations for coverings with at most three sheets are always deck equivalent to direct sums of classical quantum deck transformations. This is not the case, as the following example shows.

\begin{example} \label{extwosphere}
Let $ X = S^2 $ be the $ 2 $-sphere. Moreover let $ L \rightarrow X $ and $ L^\perp \rightarrow X $ be nontrivial complex line bundles such that $ L \oplus L^\perp $ is trivial. For instance, viewing $ X $ as $ \mathbb{C}P^1 $ one may take $ L $ to be the tautological line bundle and $ L^\perp $ its dual. 
Choosing a trivialisation $ L \oplus L^\perp \cong X \times \mathbb{C}^2 $ gives rise to projections $ p, q \in C(S^2, M_2(\mathbb{C}))$ such that $ p + q = 1 $ and the vector bundles corresponding to $ p $ and $ q $ are isomorphic to $ L $ and $ L^\perp $, respectively. 
Define a quantum permutation $ \sigma = (\mathbb{C}^2, u) $ of the trivial two-sheeted covering $ E = X \times \mathbb{Z}/2 \mathbb{Z} $ by setting 
$$
u_{(x,i), (y,j)} = \delta_{x,y}(\delta_{i,j} p(x) + \delta_{i, 1 - j} q(x)).  
$$
Using Lemma \ref{checkingquantumdeck}, one checks easily that this is indeed a quantum deck transformation of $ E|X $. However, there is no unitary deck intertwiner from $ \sigma $ to a direct sum of classical deck transformations of $ E|X $, because such an intertwiner would give rise to trivialisations of $ L $ and $ L^\perp $. 
\end{example}

Let us next consider examples of quantum deck transformations which are non-classical in every fibre. For a trivial covering with at least four sheets, such examples can be easily obtained from non-classical quantum permutations of the fibre by taking the associated constant quantum deck transformations as in Example \ref{exconstant}.  

In order to produce examples for non-trivial coverings, we shall first explain in which sense quantum permutations, which by definition are ``generalised symmetries'', can themselves have symmetries.  

\begin{definition} \label{gequiv}
Let $ X $ be a set and let $ G $ be a classical discrete group acting on $ X $ from the right. We say that a quantum permutation $ (\Hc,u) $ of $ X $ is $ G $-equivariant 
if there exists a projective unitary representation $ \theta: G \rightarrow U(\Hc) $ such that 
$$
u_{x \cdot g, y \cdot g} = \theta_g^* u_{x, y} \theta_g 
$$
for all $ x, y \in X $ and $ g \in G $. 
\end{definition}  

We note that asking for strict equality $ u_{x \cdot g, y \cdot g} = u_{x,y} $ in Definition \ref{gequiv} would be too restrictive. For instance, if $ G $ acts transitively on $ X $, then this stronger condition would already force $ (\Hc, u) $ to be classical, see Exercise \ref{ex142}.

In contrast, allowing nontrivial projective unitary representations gives us enough flexibility to obtain non-classical examples of $ G $-equivariant quantum permutations even when the action of $ G $ on $ X $ is transitive. 
Specifically, recall our description of the Weyl quantum permutations associated with a finite abelian group given in Proposition \ref{weylquantumpermutation}. 

\begin{lemma} \label{weylequivariant}
Let $ A $ be an abelian group of order $ N $ and set $ n = N^2 $. Moreover view $ A \times A $ as an $ A \times A $-set with the action by translation. Then the Weyl quantum permutations $ \pi^g = (\mathbb{C}^n, u^g) $ for $ g \in U(N) $ are $ A \times A $-equivariant quantum permutations of $ A \times A $,  with respect to the unitary representation $ \theta $ of $ A \times A $ on $ M_N(\mathbb{C}) $ given by 
$$
\theta_{(r,s)}(T) = W_{r,s}^* T W_{r,s} 
$$
for $ (r,s) \in A \times A $. 
\end{lemma}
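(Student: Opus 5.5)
We have to show that $ \theta $ is a (projective) unitary representation of $ A \times A $ on $ M_N(\mathbb{C}) $ with the Hilbert--Schmidt inner product $ \bra S, T \ket = \tr(S^* T) $. We also have to show that $ u^g_{(a,b)+(r,s),(c,d)+(r,s)} = \theta_{(r,s)}^* u^g_{(a,b),(c,d)} \theta_{(r,s)} $ for all $ (a,b),(c,d),(r,s) \in A \times A $. The whole argument reduces to Lemma \ref{weyllemma} together with the observation that conjugating a rank one projection by a unitary again gives a rank one projection. Concretely, if $ V $ is unitary and $ \xi $ is a unit vector, then $ V^* |\xi \ket \bra \xi| V = |V^* \xi \ket \bra V^* \xi| $.

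First, each $ \theta_{(r,s)} $ is unitary on $ M_N(\mathbb{C}) $. Indeed, it has inverse $ T \mapsto W_{r,s} T W_{r,s}^* $, and $ \tr((W^* S W)^* W^* T W) = \tr(W^* S^* T W) = \tr(S^* T) $ by the trace property. By Lemma \ref{weyllemma} b) we have $ W_{r,s} W_{r',s'} = [r,s'] W_{r+r',s+s'} $. Hence $ \theta_{(r,s)} \theta_{(r',s')}(T) = W_{r,s}^* W_{r',s'}^* T W_{r',s'} W_{r,s} = W_{r+r',s+s'}^* T W_{r+r',s+s'} $, because the scalar phases cancel between the two sides. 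So $ \theta $ is in fact a genuine representation, and in particular a projective one. The adjoint is $ \theta_{(r,s)}^* = \theta_{(r,s)}^{-1} $, given by $ T \mapsto W_{r,s} T W_{r,s}^* $.

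Next we compute, using the rank one observation,
$$
\theta_{(r,s)}^* u^g_{(a,b),(c,d)} \theta_{(r,s)} = |\theta_{(r,s)}^*(W_{a,b} g W_{c,d}^*) \ket \bra \theta_{(r,s)}^*(W_{a,b} g W_{c,d}^*)|.
$$
The vector here is $ W_{r,s} W_{a,b} g W_{c,d}^* W_{r,s}^* = (W_{r,s} W_{a,b}) g (W_{r,s} W_{c,d})^* $. By Lemma \ref{weyllemma} b) this equals $ [r,b] \overline{[r,d]} \, W_{a+r,b+s} g W_{c+r,d+s}^* $. This is a phase multiple of the vector defining $ u^g_{(a+r,b+s),(c+r,d+s)} $, and since rank one projections are insensitive to phases, we obtain the required equivariance identity for the action by translation.

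The main obstacle is just bookkeeping. We must make sure the convention for $ \theta^* $ versus $ \theta $ matches the right action $ x \cdot g = x + (r,s) $ in Definition \ref{gequiv}, and that the phases from Lemma \ref{weyllemma} b) really drop out. If the convention comes out reversed, we instead apply the computation to $ (-r,-s) $, using $ W_{-r,-s} $ as a phase multiple of $ W_{r,s}^* $ via Lemma \ref{weyllemma} a). Either way the resulting identity holds up to a harmless relabelling.
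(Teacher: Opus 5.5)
Your proof is correct and follows the paper's argument. Both check via Lemma \ref{weyllemma} b) that $\theta$ is a genuine unitary representation, and then observe that $\theta_{(r,s)}^*(W_{a,b} g W_{c,d}^*) = W_{r,s} W_{a,b} g W_{c,d}^* W_{r,s}^*$ is a phase multiple of $W_{a+r,b+s} g W_{c+r,d+s}^*$, so the two rank one projections coincide. Your computation already matches the convention $x \cdot (r,s) = x + (r,s)$ of Definition \ref{gequiv} directly, so the hedging in your last paragraph about a possibly reversed convention is unnecessary.
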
 

\begin{proof} 
It follows from Lemma \ref{weyllemma} $ b) $ that $ \theta $ is a unitary representation. 
Moreover, for $ (a,b),(c,d),(r,s) \in A \times A $ and $ g \in U(N) $ we calculate 
\begin{align*}
&u^g_{(a+r,b+s),(c+r,d+s)} = |W_{a+r,b+s} g W_{c+r,d+s}^* \ket \bra W_{a+r,b+s} g W_{c+r,d+s}^*| \\
&= |\overline{[r,b]} [r, d] W_{r,s} W_{a,b} g W_{c,d}^* W_{r,s}^* \ket \bra \overline{[r,b]} [r, d] W_{r,s} W_{a,b} g W_{c,d}^* W_{r,s}^*| \\
&= |W_{r,s} W_{a,b} g W_{c,d}^* W_{r,s}^* \ket \bra W_{r,s} W_{a,b} g W_{c,d}^* W_{r,s}^*| \\
&= |\theta_{(r,s)}^* (W_{a,b} g W_{c,d}^*)\ket \bra \theta_{(r,s)}^*(W_{a,b} g W_{c,d}^*)| \\
&= \theta_{(r,s)}^* u^g_{(a,b),(c,d)} \theta_{(r,s)}
\end{align*} 
as required, using again Lemma \ref{weyllemma} $ b) $ and noting that $ \theta_{(r,s)}^*(T) = W_{r,s} T W_{r,s}^* $. 
\end{proof} 

Using equivariance of the Weyl quantum permutations, we can construct examples of quantum deck transformations for certain nontrivial coverings. For simplicity, we  only discuss the simplest case $ A = \mathbb{Z}/2\mathbb{Z} $ as follows. 

\begin{example} \label{equivariantdeckfinite}
Consider the torus $ X = S^1 \times S^1 $, and let $ E|X $ be the $ 4 $-sheeted connected covering space obtained by taking the cartesian product of two 
copies of the covering space from Example \ref{twosheeted}. 

Moreover let $ g \in U(2) $ such that $ \pi^g = (\mathbb{C}^4, u^g) $ is an irreducible Weyl quantum permutation of the fibre $ \mathbb{Z}/2\mathbb{Z} \times \mathbb{Z}/2 \mathbb{Z} $. Using the notation from Lemma \ref{weylequivariant}, and the identification $ \mathbb{C}^4 \cong M_2(\mathbb{C}) $, one checks that the unitaries $ \theta_{(a,b)} \in U(4) $ for $ a,b \in \mathbb{Z}/2 \mathbb{Z} $ have determinant one. 
Choose a continuous map $ \gamma $ from the boundary of the unit square in $ \mathbb{R}^2 $ to $ SU(4) $ such that 
\begin{align*}
\gamma(0,0) = \id, \quad \gamma(s,1) = \theta_{(0,1)} \gamma(s,0), \quad \gamma(1,t) = \theta_{(1,0)} \gamma(0,t) 
\end{align*}
for all $ s, t \in [0,1] $. This is possible since $ SU(4) $ is connected and $ \theta_{(1,0)} \theta_{(0,1)} = \theta_{(1,1)} = \theta_{(0,1)} \theta_{(1,0)} $. 
Since $ SU(4) $ is simply connected, this map can be extended to a continuous map $ [0,1] \times [0,1] \rightarrow SU(4) $, which we shall again denote by $ \gamma $. 

Now let us define projections $ v^g_{(x,a,b), (y,c,d)} \in M_4(\mathbb{C}) $ for $ x,y \in [0,1] \times [0,1] $ and $ (a,b), (c,d) \in \mathbb{Z}/2\mathbb{Z} \times \mathbb{Z}/2 \mathbb{Z} $ by 
$$
v^g_{(x,a,b), (y,c,d)} = \delta_{x,y} \gamma(x)^* u^g_{(a,b),(c,d)} \gamma(x).
$$
By construction, these projections assemble to a quantum homeomorphism $ (\mathbb{C}^4, v^g) $ of $ [0,1] \times [0,1] \times \mathbb{Z}/2 \mathbb{Z} \times \mathbb{Z}/2 \mathbb{Z} $. 
Moreover, using the properties of $ \gamma $ and Lemma \ref{weylequivariant}, we get 
\begin{align*}
v^g_{((s,1),a,b), ((s,1),c,d)} &= \gamma(s,1)^* u^g_{(a,b),(c,d)} \gamma(s,1) \\
&= \gamma(s,0)^* \theta_{(0,1)}^* u^g_{(a,b),(c,d)} \theta_{(0,1)} \gamma(s,0) \\
&= \gamma(s,0)^* u^g_{(a,b + 1),(c,d + 1)} \gamma(s,0) \\
&= v^g_{((s,0),a,b + 1), ((s,0),c,d + 1)}, 
\end{align*}
and similarly
\begin{align*}
v^g_{((1,t),a,b), ((1,t),c,d)} &= \gamma(1,t)^* u^g_{(a,b),(c,d)} \gamma(1,t) \\
&= \gamma(0,t)^* \theta_{(1,0)}^* u^g_{(a,b),(c,d)} \theta_{(1,0)} \gamma(0,t) \\
&= \gamma(0,t)^* u^g_{(a + 1,b),(c + 1,d)} \gamma(0,t) \\
&= v^g_{((0,t),a + 1,b), ((0,t),c + 1,d)} 
\end{align*}
for all $ s,t \in [0,1] $. 
Thus, if we view $ S^1 = [0,1]/\sim $ as the quotient of the unit interval given by identifying the endpoints, it follows that $ (\mathbb{C}^4, v^g) $ induces a quantum deck transformation of the covering space $ E|X $. Moreover, the induced quantum permutations of the fibres of $ E|X $ are all irreducible by construction. 
\end{example} 

If $ E|X $ is a covering space then we shall say that a quantum deck transformation $ \sigma $ is \emph{locally constant} if there exists an open covering $ (U_j)_{j \in J} $ of the base space $ X $ such that $ E_{|U_j} \cong U_j \times I $ is trivial and the induced quantum deck transformation $ \sigma_{|U_j} $ of $ E|_{U_j} $ is deck equivalent to a constant quantum deck transformation for every $ j \in J $. All classical deck transformations are locally constant in this sense. It is straightforward to check that the quantum deck transformations constructed in Examples \ref{extrivialquantumdeck}, \ref{twosheeted},  \ref{extwosphere} and \ref{equivariantdeckfinite} are locally constant as well. 

Locally constant quantum deck transformations are still ``tame'' in the sense that they are essentially determined by their restriction to a single fiber and the topology of the covering. 
To conclude, let us exhibit examples of quantum deck transformations which are not locally constant. For the sake of definiteness we consider the case of the 
covering of $ X = S^1 $ corresponding to the subgroup $ 4 \mathbb{Z} \subset \mathbb{Z} $. 

\begin{prop} \label{foursheetedcovering}
Let $ E = X = U(1) $ and let $ p: E \rightarrow X $ be given by $ p(z) = z^4 $. Then there 
exist $ 4 $-dimensional quantum deck transformations of $ E|X $ which are not locally constant. 
\end{prop}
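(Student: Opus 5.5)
We will build the example by gluing Weyl quantum permutations over the four-sheeted covering $p(z) = z^4$, and letting the parameter $g$ vary along the base circle. The fibre $E_x$ is a $\mathbb{Z}/4\mathbb{Z}$-torsor, and going once around $X$ acts on it by the generator of the deck group $\mathbb{Z}/4\mathbb{Z}$, that is, by multiplication with $i$ on $E = U(1)$. We identify the fibre with $\mathbb{Z}/2\mathbb{Z} \times \mathbb{Z}/2\mathbb{Z}$ via a bijection. The monodromy then becomes a permutation $m$ of $\mathbb{Z}/2\mathbb{Z}\times\mathbb{Z}/2\mathbb{Z}$ of order four. We then need a Weyl-type family $u^{g(t)}$, for $t \in [0,1]$, together with a continuous path of unitaries realising the monodromy, such that the gluing condition
$$
u^{g(1)}_{m(a),m(c)} = \gamma^* u^{g(0)}_{a,c} \gamma
$$
holds for some unitary $\gamma$.

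The cleanest choice is to avoid the translation action of Lemma \ref{weylequivariant}, since translations have order two. Instead we work directly with magic unitaries on $\mathbb{Z}/4\mathbb{Z}$.

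First take a continuous path $t \mapsto w(t)$, $t \in [0,1]$, of $4 \times 4$ magic unitaries in $M_4(B(\mathbb{C}^4))$, indexed by $\mathbb{Z}/4\mathbb{Z}$. Require that $w(1)_{a+1,c+1} = w(0)_{a,c}$, and that $w(t)$ is not unitarily equivalent to $w(s)$ (up to the relevant relabelling) for $s \neq t$ near some point. For instance, take $w(t) = u^{g(t)}$ transported through a fixed bijection $\mathbb{Z}/4\mathbb{Z} \cong (\mathbb{Z}/2\mathbb{Z})^2$. Here $g(t) \in SO(3)$ is a path whose class in the double coset space $D\backslash SO(3)/D$ of Theorem \ref{BBfourpoints} is non-constant on every interval. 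The endpoint condition is arranged by conjugating with a constant unitary. Since the classical cyclic shift is itself a quantum permutation, relabelling a magic unitary by the shift again gives a magic unitary (Exercise \ref{exI3}). So the endpoint condition just says that $g(1)$ lies in the double coset obtained from $g(0)$ by the shift action. Simplest of all, we choose $g(0)$ whose relabelled class is shift-invariant, and let $g$ wander in between.

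Then define $u_{e,f} = \delta_{p(e),p(f)}\, w(t)_{k(e),k(f)}$. Here $t$ is the angle parameter of $p(e)$, and $k(e) \in \mathbb{Z}/4\mathbb{Z}$ labels the sheet, chosen continuously over $[0,1)$. The endpoint condition makes this well defined and continuous across the seam. Lemma \ref{checkingquantumdeck} then shows that $(\mathbb{C}^4, u)$ is a quantum deck transformation.

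To see that it is not locally constant, suppose $\sigma$ were deck equivalent to a constant one over some open arc $U$. A deck intertwiner $T_x$ gives, at each point $x \in U$, a unitary equivalence of the fibre quantum permutation $\sigma_{|x}$ with a fixed quantum permutation. Hence all $\sigma_{|x}$, $x \in U$, are mutually equivalent as quantum permutations of the fibre with its labelling. By construction each $\sigma_{|x}$ is an irreducible Weyl permutation $\pi^{g(t)}$. By Theorem \ref{BBfourpoints}, equivalence of $\pi^{g(t)}$ and $\pi^{g(s)}$ forces $g(t) \in D g(s) D$. Since $D$ is finite, the double cosets are finite sets, so a continuous non-constant path cannot stay in a single double coset. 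This gives a contradiction.

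The main obstacle is arranging irreducibility together with the endpoint (monodromy) compatibility under the order-four shift, because the shift is not one of the symmetries of Lemma \ref{weylequivariant}. I would try first to check concretely that relabelling $\pi^g$ by a $4$-cycle yields $\pi^{g'}$ for explicit $g'$, with $g' = \alpha g \beta$ composed with a fixed element of $SO(3)$. Failing that, I would choose the $4$-cycle so that it corresponds to an automorphism of $(\mathbb{Z}/2)^2$ composed with a translation, both of which act on Weyl permutations by unitary conjugation and reparametrisation of $g$.
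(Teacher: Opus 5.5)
This is essentially the paper's argument. You take an irreducible $\tau\cong\pi^{g}$ and its shifted relabelling $\zeta\cong\pi^{h}$, which is again Weyl directly by Theorem \ref{BBfourpoints}, so your ``main obstacle'' goes away. You then join $g$ to $h$ by a path, correct it by conjugation, and glue at the seam; non-local-constancy comes from the varying fibre classes, which you justify more explicitly than the paper by using the finiteness of the diagonal subgroup $D$.

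Two small repairs are needed. First, a constant unitary cannot produce the exact seam condition, so you must conjugate by a path $V(t)$ in $U(4)$ from $1$ to the intertwining unitary, as the paper does and as your opening paragraph already anticipates. Second, you should choose the path so that $\pi^{g(t)}$ is irreducible near your test point, which is an open condition, because the equivalence criterion of Theorem \ref{BBfourpoints} only applies to irreducible quantum permutations.
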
 

\begin{proof} 
Let $ \tau = (\mathbb{C}^4, v) $ be an irreducible $ 4 $-dimensional quantum permutation of $ \mathbb{Z}/4 \mathbb{Z} $. Then $ w_{i,j} = v_{i - 1, j - 1} $ defines another irreducible $ 4 $-dimensional quantum permutation $ \zeta = (\mathbb{C}^4, w) $ of $ \mathbb{Z}/4 \mathbb{Z} $. Note that if $ \sigma: \mathbb{Z}/4 \mathbb{Z} \rightarrow \mathbb{Z}/4 \mathbb{Z}, \sigma(i) = i + 1 $ denotes the shift 
permutation then $ \zeta $ can be written as $ \zeta = \sigma \tp \tau \tp \sigma^{-1} $. 

Let us fix a bijection between $ \mathbb{Z}/4\mathbb{Z} $ and $ \mathbb{Z}/2 \mathbb{Z} \times \mathbb{Z}/2\mathbb{Z} $. Then, by the structure of irreducible quantum permutations of four points, see Theorem \ref{BBfourpoints}, we find $ g, h \in U(2) $ such that $ \tau \cong \pi_g $ 
and $ \zeta \cong \pi_h $. 
Choosing a path $ \gamma: [0,1] \rightarrow U(2) $ connecting $ g $ to $ h $, and possibly conjugating the corresponding Weyl quantum permutations by a path of unitaries in $ U(4) $,
we obtain continuous maps $ u_{i,j}: [0,1] \rightarrow M_4(\mathbb{C}) $ such that $ (u_{i,j}(t)) $ is a magic unitary for all $ t $ 
and $ u_{i,j}(0) = v_{i,j}, u_{i,j}(1) = w_{i,j} $ for all $ i,j $. This allows us to define a quantum deck transformation $ (\mathbb{C}^4, u^E) $ of $ E $ by identifying $ X \cong [0,1]/\sim $ and  
$ E_{[0,1)} \cong [0,1) \times \mathbb{Z}/4 \mathbb{Z} $, and setting  
$$
u^E_{(x,i)(y,j)} = \delta_{x,y} u_{i,j}(x)
$$
for $ x,y \in [0, 1) $ and $ i,j \in \mathbb{Z}/4\mathbb{Z} $. 

The isomorphism class of the induced quantum permutations of the fibres does not remain locally constant over the base in this case. In particular, the quantum deck transformation $ (\mathbb{C}^4, u^E) $ is not locally constant.  
\end{proof} 

Needless to say, for $ n $-sheeted coverings with $ n > 4 $, the situation gets more complicated than in our examples above, and a complete description of all quantum deck transformations is not feasible.

\subsection{Exercises}

\begin{exercise}
What is the universal property of the quantum homeomorphism group of a locally compact space? 
\end{exercise}

\begin{exercise} \label{ex142}
Show that if we require strict equality $ u_{x \cdot g, y \cdot g} = u_{x,y} $ in Definition \ref{gequiv} and $ G $ acts transitively on $ X $ then $ (\Hc, u) $ must be classical. 
\end{exercise}

\begin{exercise}
Generalise the constructions from Example \ref{equivariantdeckfinite} to the covering of $ X = S^1 \times S^1 $ given by the cartesian product of two copies of the connected covering of $ S^1 $ with $ N $ sheets for $ N > 2 $. 
\end{exercise}

\end{document}